\newtheorem{thm}{Theorem}[section]
\newtheorem{cor}[thm]{Corollary}
\newtheorem{lem}[thm]{Lemma}
\newtheorem{prop}[thm]{Proposition}
\newtheorem{conj}[thm]{Conjecture}
\newtheorem*{claim}{Claim}
\theoremstyle{definition}
\theoremstyle{remark}
\newtheorem{rem}[thm]{Remark}
\numberwithin{equation}{section}
\newcommand{\spin}{\ifmmode{\rm Spin}\else{${\rm spin}$\ }\fi}
\newcommand{\spinc}{\ifmmode{{\rm Spin}^c}\else{${\rm spin}^c$}\fi}
\newcommand{\spincs}{\mathfrak s}
\newcommand{\hfhat}{\widehat{HF}}
\newcommand{\hf}{HF}
\newcommand{\hfp}{HF^+}
\newcommand{\hfred}{HF_{\rm red}}
\newcommand{\hfkhat}{\widehat{HFK}}
\newcommand{\ared}[1]{\mathbf{A}_{{\rm red},#1}}
\newcommand{\tp}{\mathcal{T}^+}
\newcommand{\Xipq}{\mathbb{X}^+_{i,p/q}}
\newcommand{\Abold}{\mathbf{A}^+}
\newcommand{\Bbold}{\mathbf{B}^+}
\newcommand{\AT}{\mathbf{A}^T}
\newcommand{\vbold}{\mathbf{v}}
\newcommand{\hbold}{\mathbf{h}}
\newcommand{\Dbold}{\mathbf{D}_{i,p/q}^+}
\newcommand{\Ap}{\mathbb{A}^+}
\newcommand{\Bp}{\mathbb{B}^+}
\newcommand{\Z}{\mathbb{Z}}
\newcommand{\F}{\mathbb{F}}
\newcommand{\Q}{\mathbb{Q}}
\newcommand{\rk}{{\rm rk}}
\newcommand{\Zp}{\mathbb{Z}/p\mathbb{Z}}
\begin{document}

\title{Non-integer characterizing slopes for torus knots}%
\author{Duncan McCoy}%
\address {Department of Mathematics, University of Texas at Austin, Austin, TX 78712}
\email{d.mccoy@math.utexas.edu}
\date{}%

\begin{abstract}
A slope $p/q$ is a characterizing slope for a knot $K$ in $S^3$ if the oriented homeomorphism type of $p/q$-surgery on $K$ determines $K$ uniquely. We show that for each torus knot its set of characterizing slopes contains all but finitely many non-integer slopes. This generalizes work of Ni and Zhang who established such a result for $T_{5,2}$. Along the way we show that if two knots $K$ and $K'$ in $S^3$ have homeomorphic $p/q$-surgeries, then for $q\geq 3$ and $p$ sufficiently large we can conclude that $K$ and $K'$ have the same genera and Alexander polynomials. This is achieved by consideration of the absolute grading on Heegaard Floer homology.
\end{abstract}
\maketitle

\section{Introduction}
Given a knot $K\subseteq S^3$, we say that $p/q\in \Q$ is a {\em characterizing slope} for $K$, if the oriented homeomorphism type of the manifold obtained by $p/q$-surgery on $K$ determines $K$ uniquely.\footnote{Throughout the paper, we use $Y'\cong Y$ to denote the existence of a orientation-preserving homeomorphism between $Y$ and $Y'$.} In general determining the set of characterizing slopes for a given knot is challenging. It was a long-standing conjecture of Gordon, eventually proven by Kronheimer, Mrowka, Ozsv{\'a}th and Szab{\'o}, that every slope is a characterizing slope for the unknot \cite{kronheimer07lensspacsurgeries}. Ozsv{\'a}th and Szab{\'o} have also shown that every slope is a characterizing slope for the trefoil and the figure-eight knot \cite{Ozsvath2006trefoilsurgery}. More recently, Ni and Zhang have studied characterizing slopes for torus knots, showing that $T_{5,2}$ has only finitely many non-characterizing slopes which are not negative integers \cite{Ni14characterizing}. The aim of this paper is to establish a similar result for arbitrary torus knots. We will be primarily interested in non-integer surgery slopes. For each torus knot, the main result of this paper is to classify the non-integer non-characterizing slopes outside of a finite set of slopes.

\begin{thm}\label{thm:toruscharslopes}
For $s>r>1$ and $q\geq 2$ let $K$ be a knot such that $S_{p/q}^3(K)\cong S_{p/q}^3(T_{r,s})$. If $p$ and $q$ satisfy at least one of the following:
\begin{enumerate}[(i)]
\item $p \leq \min \{-\frac{43}{4}(rs-r-s),-32q\}$,
\item $p\geq \max\{\frac{43}{4}(rs-r-s), 32q+ 2q(r-1)(s-1)\}$, or
\item $q\geq 9$,
\end{enumerate}
then we have either $(a)$ $K=T_{r,s}$, or $(b)$ $K$ is a cable of a torus knot, in which case $q=\lfloor s/r\rfloor$, $p=\frac{r^2 q^4 -1}{q^2-1}$, $s=\frac{rq^3 \pm 1}{q^2-1}$ and $r>q$.
\end{thm}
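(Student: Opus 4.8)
The plan is to combine two ingredients: the Heegaard Floer restrictions established earlier in the paper, and the geometric classification of knots admitting Seifert fibered surgeries. The organizing observation is that \emph{every} surgery on $T_{r,s}$ is Seifert fibered: $S_{p/q}^3(T_{r,s})$ is a lens space when $p=qrs\pm1$, a connected sum of two lens spaces when $p=qrs$, and otherwise a Seifert fibration over $S^2$ with three exceptional fibers of orders $r$, $s$ and $|p-qrs|$. In particular it is never hyperbolic, and when $q\geq2$ it is never toroidal. Hence any $K$ with $S_{p/q}^3(K)\cong S_{p/q}^3(T_{r,s})$ has a non-hyperbolic (and, for $q\geq2$, atoroidal) surgery, and the first goal is to exploit this to cut $K$ down to a torus knot or a cable of a torus knot.

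First I would carry out that reduction. We may assume $q\leq 8$, since otherwise case (iii) holds; then in cases (i) and (ii) the second clause of each hypothesis gives $|p|/q>2g(T_{r,s})-1$, so if $K$ were hyperbolic then by Thurston's hyperbolic Dehn surgery theorem $S_{p/q}^3(K)$ would be hyperbolic, contradicting the Seifert fibered structure above; eliminating reducible and toroidal possibilities apart from cables of torus knots then follows from the known structure of reducible and Seifert fibered surgeries on satellite knots (Gordon, Miyazaki--Motegi). In case (iii) the slope may be small, so instead I would rule out hyperbolic $K$ using the bound on the denominators of Seifert fibered surgery slopes on hyperbolic knots: such a knot admits no Seifert fibered $p/q$-surgery with $q\geq9$. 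Either way $K$ is a torus knot or a cable of a torus knot.

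Next I would pin down the parameters. After mirroring (and using that $\Delta$ is mirror-invariant) we may take $p>0$; in cases (i) and (ii) with $q\geq3$ the earlier grading theorem gives $g(K)=g(T_{r,s})$ and $\Delta_K=\Delta_{T_{r,s}}$, with the $q=2$ subcase following from the same computation once $p$ is this large, and in case (ii) the surgery is an L-space, so $K$ is an L-space knot, which further constrains any cabling parameters via the characterization of L-space cable knots. In case (iii), where the grading theorem need not apply, one instead reads the torus knot or cable off directly from the Seifert invariants of $S_{p/q}^3(T_{r,s})$. If $K=T_{a,b}$, then $\Delta_{T_{a,b}}=\Delta_{T_{r,s}}$ (or equality of Seifert data) forces $\{a,b\}=\{r,s\}$, giving conclusion $(a)$. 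If $K$ is the $(m,n)$-cable of $T_{a,b}$, then the genus identity $m\,g(T_{a,b})+g(T_{m,n})=g(T_{r,s})$, the factorization of $\Delta_K$, and the requirement that the Seifert invariants of $S_{p/q}^3(K)$ match those of $S_{p/q}^3(T_{r,s})$ together force the Diophantine relations of conclusion $(b)$.

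The main obstacle is this last step. Matching Seifert invariants in the cable case is delicate: it uses the classical coincidences between surgeries on cable knots and on torus knots (the surgered manifold becomes Seifert fibered precisely when the slope is close to the cabling slope), and one must solve the resulting system carefully enough to extract exactly $q=\lfloor s/r\rfloor$, $p=\frac{r^2q^4-1}{q^2-1}$, $s=\frac{rq^3\pm1}{q^2-1}$ and $r>q$, with no extraneous solutions. Running alongside this is the bookkeeping of the numerical thresholds in (i)--(iii): each must be checked to be large enough to legitimately invoke Thurston's theorem, the satellite surgery classification, and the denominator bound for Seifert surgeries, and to be consistent with the grading theorem's range. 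A secondary subtlety is the $q=2$ case of the genus and Alexander polynomial detection, which lies outside the clean hypothesis of the earlier theorem and must be handled by hand, using the explicit largeness of $p$.
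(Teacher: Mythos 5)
There is a genuine gap, and it lies in the logical order of your two ingredients. Your exclusion of hyperbolic $K$ in cases (i) and (ii) rests on the slope being large compared to $2g(T_{r,s})-1$, invoked via ``Thurston's hyperbolic Dehn surgery theorem.'' Thurston's theorem is not effective, and the effective statement that is actually available (the $6$-theorem of Agol--Lackenby together with Cao--Meyerhoff, Proposition~\ref{prop:hyperbolicbound} in the paper) guarantees a hyperbolic surgery only when $|p|\geq \frac{43}{4}(2g(K)-1)$, where $g(K)$ is the genus of the \emph{unknown} knot being surgered, not of $T_{r,s}$. The numerical hypotheses (i)--(ii) are phrased in terms of $r$ and $s$, so by themselves they say nothing about the exceptional slopes of $K$. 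This is exactly why the paper runs the Heegaard Floer step \emph{first}: Theorem~\ref{thm:technical2}, applied with the L-space knot $T_{r,s}$ in the role of $K$, shows (when $q\leq 8$) that the unknown knot is fibred with $g(K)=g(T_{r,s})$, and only then do the slope bounds in (i)--(ii) become bounds in terms of $g(K)$ that feed into Proposition~\ref{prop:hyperbolicbound}. In your outline the grading theorem appears only afterwards, to ``pin down parameters,'' so the geometric reduction you start with cannot get off the ground. Relatedly, your plan to treat $q=2$ by hand is aimed at the wrong statement: Theorem~\ref{thm:HFKrecovery} needs $q\geq 3$, but Theorem~\ref{thm:technical2} already covers $q=2$ (and negative $p$) precisely because $T_{r,s}$ is an L-space knot, so no separate computation is required.

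A second, smaller gap is in the satellite case with $q\leq 8$. After Gabai reduces $K$ to a cable with companion $K'$ and $S^3_{p/q}(K)\cong S^3_{p/(qw^2)}(K')$, the Lackenby--Meyerhoff denominator bound alone does not exclude a hyperbolic companion when $qw^2=8$ (i.e.\ $q=w=2$); the paper closes this using the fibredness of $K$ (hence of $K'$) and $g(K')<g(K)=g(T_{r,s})$ to apply Proposition~\ref{prop:hyperbolicbound} to $K'$ --- again input from the Heegaard Floer step that your ordering does not provide, and which a citation to the general structure of Seifert fibred surgeries on satellites does not replace. Finally, in the torus-knot subcase your ``$\Delta$ or Seifert data forces $\{a,b\}=\{r,s\}$'' glosses over the lens space slopes $|p-qrs|=1$, where the base-orbifold comparison fails and the paper needs the continued-fraction argument of Lemma~\ref{lem:toruscase}; your re-derivation of the cable Diophantine conditions is essentially Proposition~\ref{prop:cablenoncharslopes} and is fine in principle, but the hyperbolic and companion steps above are where the proposal as written would fail.
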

When combined with previously known results about integer characterizing slopes for torus knots, this yields the following corollary \cite{mccoy2014sharp,Ni14characterizing}.
\begin{cor}\label{cor:intcase}
The knot $T_{r,s}$ with $r,s>1$ has only finitely many non-characterizing slopes which are not negative integers.
\end{cor}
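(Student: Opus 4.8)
The plan is to deduce Corollary~\ref{cor:intcase} by splitting the slopes of $T_{r,s}$ into three classes---negative integers (excluded by hypothesis), non-negative integers, and non-integers---and showing that only finitely many slopes in the last two classes fail to be characterizing. Since a torus knot requires $\gcd(r,s)=1$ and satisfies $T_{r,s}=T_{s,r}$, the hypothesis $r,s>1$ lets us assume without loss of generality that $s>r>1$, so that Theorem~\ref{thm:toruscharslopes} is applicable. Write every slope as $p/q$ in lowest terms with $q\geq 1$; the integers are exactly the slopes with $q=1$.

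For the non-integer slopes I would apply Theorem~\ref{thm:toruscharslopes} with $q\geq 2$. If $q\geq 9$, or if $2\leq q\leq 8$ and $|p|$ exceeds $\max\{\tfrac{43}{4}(rs-r-s),\,32q+2q(r-1)(s-1)\}$, then one of the hypotheses (i)--(iii) holds, so any knot $K$ with $S^3_{p/q}(K)\cong S^3_{p/q}(T_{r,s})$ is either $T_{r,s}$ or a cable of a torus knot. In the cabling case, conclusion~(b) forces $q=\lfloor s/r\rfloor$ and $p=\frac{r^2q^4-1}{q^2-1}$, and these are completely determined by $r$ and $s$; hence at most one non-integer slope can be non-characterizing by way of conclusion~(b), and it arises only when $r,s$ additionally satisfy the Diophantine relation $s=\frac{rq^3\pm1}{q^2-1}$ with $r>q$. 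The only non-integer slopes \emph{not} covered by Theorem~\ref{thm:toruscharslopes} are those with $2\leq q\leq 8$ and $|p|$ at most the stated bound; as that bound depends only on $r$, $s$, and $q$ with $q\leq 8$, this is a finite set of slopes. Therefore only finitely many non-integer slopes fail to be characterizing for $T_{r,s}$.

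For the non-negative integer slopes I would simply cite the known results on torus knot surgeries: by \cite{mccoy2014sharp,Ni14characterizing}, every sufficiently large positive integer is a characterizing slope for $T_{r,s}$, so at most finitely many non-negative integers are non-characterizing. Combining this with the previous paragraph shows that the set of non-characterizing slopes of $T_{r,s}$ which are not negative integers is finite, which is the corollary. I do not expect a genuine obstacle here---the content is entirely in Theorem~\ref{thm:toruscharslopes} and the cited integer results---so the only point needing care is the bookkeeping: verifying that the range of slopes left uncovered by (i)--(iii) is finite and that conclusion~(b) contributes only finitely many (in fact at most one) further slope, so that all the exceptional sets assemble into a single finite set.
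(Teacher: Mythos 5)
Your proposal is correct and follows essentially the same route as the paper: Theorem~\ref{thm:toruscharslopes} disposes of all but finitely many non-integer slopes (with conclusion~(b) contributing at most one further slope), and the integer case is handled by citing the known result that every slope $p/q\geq \frac{43}{4}(rs-r-s)$ is characterizing for $T_{r,s}$. The paper's proof is just a terser version of the same bookkeeping.
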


It is well-known that the manifolds obtained by non-integer surgery on torus knots are Seifert fibred spaces \cite{Moser71elementary}. Conjecturally, the only knots in $S^3$ with non-integer Seifert fibred surgeries are torus knots and cables of torus knots.
\begin{conj}\label{conj:SFsurgeries}
If $S_{p/q}^3(K)$ is a Seifert fibred space and $q\geq 2$, then $K$ is either a torus knot or a cable of a torus knot.
\end{conj}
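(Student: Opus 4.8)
Since this is a well-known open conjecture I can only outline the natural line of attack and indicate where it stalls, rather than give a complete argument. Write $Y=S_{p/q}^3(K)$; as $q\ge 2$ we have $|H_1(Y)|=|p|$, so $Y$ is a rational homology sphere whenever $p\ne 0$. The plan is to stratify simultaneously on the toroidal type of $Y$ and the geometric type of $K$. If $Y$ contains an essential torus, then standard satellite–surgery results (Gordon, Miyazaki–Motegi) force $K$ to be a satellite, and then — by chasing the Seifert structure of surgeries on cable knots, which produce small Seifert fibred pieces only when the companion is itself Seifert fibred — a cable of a torus knot; this is precisely conclusion $(b)$. One is thus reduced to $Y$ atoroidal, i.e.\ $Y$ is a lens space or a small Seifert fibred space (at most three exceptional fibres over $S^2$), and $K$ is a torus knot or a hyperbolic knot. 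Torus knots give conclusion $(a)$; and a lens space surgery with $q\ge 2$ forces $K$ to be a torus knot by the Cyclic Surgery Theorem of Culler–Gordon–Luecke–Shalen, since then $\Delta(p/q,\mu)=q\le 1$ unless $K$ is a torus knot.

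This leaves the genuinely hard case: $K$ hyperbolic, $Y$ a small Seifert fibred space with non-cyclic $\pi_1(Y)$, and $q\ge 2$. Here I would run two parallel engines. Engine 1 (Culler–Shalen norms): when $\pi_1(Y)$ is infinite the Seifert geometry supplies a non-abelian representation $\pi_1(Y)\to PSL(2,\mathbb{R})$, hence a non-abelian $PSL(2,\mathbb{C})$ representation of the knot group sending $p/q$ to the identity; feeding this into the Culler–Shalen seminorm on the character variety of the knot exterior and comparing with the seminorm of a strict boundary slope bounds $\Delta(\mu,p/q)=q$, typically yielding $q\le 2$ or $q\le 3$. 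The remaining finite-$\pi_1(Y)$ possibilities are attacked analogously through the finite-surgery constraints of Boyer–Zhang. Engine 2 (Heegaard Floer homology, in the spirit of the present paper and of Ni–Zhang's treatment of $T_{5,2}$ \cite{Ni14characterizing}): compute the correction terms $d(Y,\spincs)$, the Casson–Walker invariant, and the reduced Floer homology of $Y$ directly from its Seifert invariants, and run the mapping-cone surgery formula for $S_{p/q}^3(K)$ in reverse to pin down $g(K)$, $\Delta_K(t)$ and ultimately $\hfkhat(K)$, thereby isolating a knot with torus-knot knot Floer homology; combined with the characterization input behind Theorem~\ref{thm:toruscharslopes} this would then force $K=T_{r,s}$ for the appropriate $r,s$.

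The main obstacle — and the reason the conjecture remains open — is that neither engine is currently sharp enough to finish the hyperbolic case. The Culler–Shalen machinery leaves, for each a priori possible hyperbolic example, only finitely many candidate non-integer slopes rather than none; and the Heegaard Floer obstructions are genuinely rigid only when $Y$ is an L-space (or otherwise Floer simple), whereas a small Seifert fibred space over a hyperbolic base need be neither, so the reverse surgery formula does not obviously pin down the knot Floer homology. Closing the gap would require either improving the Culler–Shalen norm estimates specifically for Seifert fibred Dehn fillings, or establishing a new Floer-theoretic rigidity statement valid for arbitrary small Seifert fibred surgeries at non-integer slopes. The present paper should be read as resolving the conjecture in the sub-case where one of the two knots involved is already assumed to be a torus knot.
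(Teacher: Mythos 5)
You are right to flag that this statement is not a theorem of the paper: it is Conjecture~\ref{conj:SFsurgeries}, stated only as motivation (via Moser's classification of torus-knot surgeries) for Conjecture~\ref{conj:listofslopes}, and the paper offers no proof of it. So there is no argument in the paper to compare your outline against, and your assessment that the conjecture is open --- and that the paper's actual contributions (Theorem~\ref{thm:toruscharslopes}, Proposition~\ref{prop:cablenoncharslopes}) address the different question of which knots can share a prescribed non-integer surgery with a given torus knot --- is essentially correct. Your closing sentence slightly misstates this: the paper does not ``resolve the conjecture in a sub-case,'' since even when the surgered manifold is assumed homeomorphic to a torus-knot surgery, Theorem~\ref{thm:toruscharslopes} needs the slope restrictions (i)--(iii) and does not cover all Seifert fibred targets.

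One concrete inaccuracy in your sketch is the first reduction: an essential torus in $Y=S_{p/q}^3(K)$ does not force $K$ to be a satellite. Hyperbolic knots do admit toroidal surgeries; by Gordon--Luecke the distance bound gives $q\leq 2$, and Eudave-Mu\~noz produced hyperbolic knots with half-integer toroidal surgeries, so for $q=2$ your stratification needs an extra argument to rule out a Seifert fibred (hence toroidal-but-not-atoroidal) example arising this way. Likewise, in the satellite case the passage from ``satellite with Seifert fibred surgery'' to ``cable of a torus knot'' is exactly the kind of step the paper only carries out under its extra hypotheses (via Gabai's solid-torus theorem plus the hyperbolic-slope bounds), not in general. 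These gaps are consistent with your own conclusion that the argument stalls; I note them only so the outline is not read as a complete reduction to the hyperbolic case.
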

We can use this to obtain a precise conjecture on which non-integer slopes are characterizing slopes for torus knots. In particular, it turns out that conjecturally each torus knot has at most one non-integer non-characterizing slope, which is precisely the non-characterizing slope occurring in the conclusion of Theorem~\ref{thm:toruscharslopes}.

\begin{conj}\label{conj:listofslopes}
For the torus knot $T_{r,s}$ with $s>r>1$, every non-integer slope is characterizing with the possible exception of $p/q$ for $p= \frac{r^2 q^4 -1}{q^2-1}$ and $ q=\lfloor s/r \rfloor\geq 2$, which is non-characterizing only if $r>q$ and $s=\frac{rq^3 \pm 1}{q^2-1}$. Moreover, for this slope there is a unique knot $K \ne T_{r,s}$ with $S_{p/q}^3(K)\cong S_{p/q}^3(T_{r,s})$ and $K$ is a cable of a torus knot.
\end{conj}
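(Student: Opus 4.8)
The statement is conjectural because it rests on Conjecture~\ref{conj:SFsurgeries}, so the plan is to prove it \emph{conditional on} that conjecture, reducing everything to a comparison of Seifert invariants. First I would fix a non-integer slope $p/q$ (so $q\geq 2$) and a knot $K$ with $S_{p/q}^3(K)\cong S_{p/q}^3(T_{r,s})$. By Moser's classification, $Y:=S_{p/q}^3(T_{r,s})$ is a Seifert fibred space over $S^2$ with (generically) three exceptional fibres of multiplicities $r$, $s$ and $\sigma:=|p-qrs|$, and its oriented Seifert invariants are explicitly computable; note that $\sigma\neq 0$ automatically since $p/q=rs$ would be an integer. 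Since $S_{p/q}^3(K)\cong Y$ is Seifert fibred and $q\geq 2$, Conjecture~\ref{conj:SFsurgeries} applies and $K$ is either a torus knot or a cable of a torus knot. The argument then splits into these two cases, and crucially it no longer requires the Heegaard Floer input (genus, Alexander polynomial) that forces the slope restrictions (i)--(iii) of Theorem~\ref{thm:toruscharslopes}: granting Conjecture~\ref{conj:SFsurgeries} we already know the topological type of $K$, so the Seifert-theoretic comparison can be run uniformly for \emph{all} non-integer slopes.

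In the torus knot case $K=T_{a,b}$, I would compare the two Seifert fibred descriptions of $Y$. An orientation-preserving homeomorphism must match base orbifolds and rational Euler numbers, so the multiset of multiplicities $\{a,b,|p-qab|\}$ must equal $\{r,s,\sigma\}$ and the full Seifert invariants must agree up to the standard moves. Treating $p/q$ as fixed, I would show that this system forces $\{a,b\}=\{r,s\}$, that is $K=T_{r,s}$. The finitely many degenerate configurations (a multiplicity equal to $1$, so $Y$ is a lens space, or two multiplicities coinciding) break the generic rigidity and must be excluded by hand; this is where small cases need individual verification.

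The cable case is the substantive one and is the source of the exceptional slope. Writing $K=C_{m,n}(T_{c,d})$ and using Gordon's description of surgery on cable knots, $S_{p/q}^3(K)$ is the union of the Seifert fibred complement of $T_{c,d}$ with the Seifert fibred filling of the cable space, glued along a torus. For the result to be Seifert fibred rather than a genuine graph manifold, the two fibrations must be compatible along that torus, i.e.\ the cabling must lie along the fibre direction of the torus knot complement; this is already a strong restriction on $(m,n,c,d)$. Imposing compatibility and then equating the resulting Seifert invariants with those of $Y$ yields a Diophantine system, and solving it is exactly what produces $q=\lfloor s/r\rfloor$, $p=\frac{r^2q^4-1}{q^2-1}$, $s=\frac{rq^3\pm1}{q^2-1}$ and $r>q$, while showing there are no cable solutions at any other non-integer slope (so all other slopes are characterizing). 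For the ``moreover'' clause, reading the same Seifert data backwards pins down the cable parameters and the cabled torus knot, giving both existence and uniqueness of the single knot $K\neq T_{r,s}$ realizing the exceptional slope.

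The main obstacle is twofold. Unconditionally the statement is out of reach, since Conjecture~\ref{conj:SFsurgeries} is itself open, so any proof along these lines is necessarily conditional. Even granting it, the heart of the work is the cable analysis: one must track orientations and Seifert invariants carefully through Gordon's formula, correctly formulate the fibration-compatibility condition, and verify that the resulting Diophantine system has \emph{exactly} the claimed solution set and no others. The degenerate and small-multiplicity cases in both the torus and cable analyses are precisely where a spurious extra coincidence could survive, so they are the places that would demand the most care in turning this plan into a complete proof.
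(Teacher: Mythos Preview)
Your plan matches the paper's outline exactly: the paper states that Conjecture~\ref{conj:listofslopes} is deduced from Conjecture~\ref{conj:SFsurgeries} by ``checking when a torus knot can share a surgery with a torus knot or a cable of a torus knot,'' and it carries out these two checks unconditionally as Lemma~\ref{lem:toruscase} and Proposition~\ref{prop:cablenoncharslopes}. So the conditional reduction you describe is precisely the intended one.

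Where your sketch diverges from the paper is in the tools used for the two checks. In both the torus--torus and torus--cable comparisons the paper supplements the Seifert-invariant matching with the Casson--Walker invariant (via $\Delta_K''(1)$, see \eqref{eq:secondderiv} and \eqref{eq:satellitederiv}), which supplies the extra equation needed to pin down the Diophantine system; you do not mention this and it is not clear that orbifold data alone suffice. More seriously, your description of the lens-space subcase as ``finitely many degenerate configurations'' to be ``excluded by hand'' understates the issue: the slopes $p/q=rs\pm 1/q$ give lens spaces for \emph{every} $q\geq 2$, so there are infinitely many such slopes, and the paper disposes of the torus--torus comparison there with a nontrivial continued-fraction argument (the end of the proof of Lemma~\ref{lem:toruscase}). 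In the cable case, the paper does not argue via ``compatibility of fibrations along the splitting torus'' as you suggest, but instead uses Gabai/Gordon to conclude directly that $p/q=wc\pm 1/q$ and $S_{p/q}^3(K)\cong S_{p/(qw^2)}^3(T_{a,b})$, and then compares the two global Seifert presentations. Your route may well work, but the paper's is more direct and avoids the graph-manifold splitting analysis.
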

One deduces Conjecture~\ref{conj:listofslopes} from Conjecture~\ref{conj:SFsurgeries} by checking when a torus knot can share a surgery with a torus knot or a cable of a torus knot. The classification of when a cable of a torus knot and a cable of a torus knot have a common non-integer surgery is recorded in the following proposition. Note that it shows the converse to Conjecture~\ref{conj:listofslopes} is true: for each $T_{r,s}$ satisfying the necessary conditions, there is a cable of a torus knot exhibiting that the required slope is non-characterizing.
\begin{prop}\label{prop:cablenoncharslopes}
For $s>r>1$ and $q\geq 2$, there exists a non-trivial cable of a torus knot $K$ such that $S_{p/q}^3(K)\cong S_{p/q}^3(T_{r,s})$ if and only if
\[s=\frac{rq^3 \pm 1}{q^2-1},\quad p= \frac{r^2 q^4 -1}{q^2-1}, \quad q=\lfloor s/r\rfloor \quad\text{and}\quad r>q,\]
in which case $K$ is the $(q,\frac{q^2r^2-1}{q^2-1})$-cable of $T_{r,\frac{rq\pm 1}{q^2-1}}$.
\end{prop}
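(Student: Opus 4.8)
The plan is to handle the two implications separately, in each case reducing to explicit Seifert invariant computations for surgeries on torus knots and on cable knots, as in Moser \cite{Moser71elementary} and Gordon's analysis of surgery on satellite knots.

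For the ``if'' direction, assume the numerical conditions hold and put $c=r$, $d=\frac{rq\pm1}{q^2-1}$ and $e=\frac{q^2r^2-1}{q^2-1}$. I would first check that $d$ and $e$ are integers (from $rq\equiv\mp q^2\equiv\mp1\pmod{q^2-1}$ one gets $r^2\equiv1\pmod{q^2-1}$), that $\gcd(c,d)=\gcd(q,e)=1$, and that $d\geq2$ (this is where $r>q$ enters), so that $K:=C_{q,e}(T_{c,d})$ is a genuine non-trivial cable of the torus knot $T_{c,d}$. The point of the specific values is that the cabling slope of $K$ equals $qe$ and that $p-q\cdot qe=1$; hence the $p/q$-Dehn filling of the cabling space is a solid torus, so $S^3_{p/q}(K)\cong S^3_{p/q^3}(T_{c,d})$ as oriented manifolds. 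It then remains to check $S^3_{p/q^3}(T_{c,d})\cong S^3_{p/q}(T_{r,s})$. Since $s=d+rq$, a direct computation gives $|p-q^3rd|=s$ and $|p-qrs|=d$, so both sides are Seifert fibred over $S^2$ with three exceptional fibres of multiplicities $r,d,s$; comparing the remaining Seifert data and the orientations via Moser's formulas completes this direction.

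For the ``only if'' direction, suppose $K=C_{a,b}(T_{c,d})$ is a non-trivial cable (so $a\geq2$, $\gcd(a,b)=1$, and $c,d\geq2$) with $S^3_{p/q}(K)\cong S^3_{p/q}(T_{r,s})$ and $q\geq2$. Set $\delta=|p-abq|$. The torus coming from the cabling annulus survives in $S^3_{p/q}(K)$ and is incompressible unless the $p/q$-filling of the cabling space is a solid torus, that is, unless $\delta\leq1$; and $\delta=0$ makes $S^3_{p/q}(K)$ reducible. But for $q\geq2$ and $\gcd(p,q)=1$, $S^3_{p/q}(T_{r,s})$ is an irreducible Seifert fibred space over $S^2$ with at most three exceptional fibres and non-zero Euler number, hence atoroidal. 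So $\delta=1$ and $S^3_{p/q}(K)\cong S^3_{p/(qa^2)}(T_{c,d})$. Ruling out (again via the $\delta=1$ constraint applied on the torus-knot side) the possibility that this manifold is a lens space, both $S^3_{p/q}(T_{r,s})$ and $S^3_{p/(qa^2)}(T_{c,d})$ are small Seifert fibred spaces with all exceptional multiplicities at least $2$; outside of the prism-manifold case $r=|p-qrs|=2$, which I would deal with directly, the Seifert fibration is then unique, so $\{r,s,|p-qrs|\}=\{c,d,|p-qa^2cd|\}$ together with a matching of the residual Seifert invariants modulo the multiplicities.

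The crux is the ensuing arithmetic. Substituting $\delta=1$, i.e.\ $p=abq\pm1$, into the matched Seifert data should force $a=q$ and, after determining which multiplicity corresponds to which, $c=r$, $|p-qrs|=d$ and $|p-q^3rd|=s$; solving these relations simultaneously, together with the Seifert-invariant congruences and the coprimality constraints, yields the stated formulas for $s$, $p$ and $d$, the value of $b$ (from $p=q^2b\pm1$), and the relations $q=\lfloor s/r\rfloor$ and $r>q$. I expect the main obstacle to be exactly this final step: organising the case analysis for how two triples of exceptional multiplicities can coincide — including the a priori possibilities that $a$ equals $c$ or $d$, or that $a$ divides one of the defects $|p-qrs|$ or $|p-qa^2cd|$ — and squeezing out of the residual Seifert invariants enough congruence information to eliminate every matching except the one in the statement, all while tracking orientations.
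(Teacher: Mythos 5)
Your ``if'' direction and the geometric reduction in the ``only if'' direction follow the paper: the cabling torus forces the surgery slope to be distance one from the cabling slope, so $S^3_{p/q}(K)\cong S^3_{p/(qa^2)}(T_{c,d})$, the reducible and lens space cases are excluded, and one is left comparing two Seifert presentations over $S^2$ with three exceptional fibres. (Your worry about prism manifolds is not needed: the fibration over $S^2$ is unique there too, and the classification of such presentations that the paper invokes applies whenever all three multiplicities exceed $1$.)

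The genuine gap is the step you yourself label the crux: nothing in your plan actually forces the winding number to equal $q$, nor does it pin down which multiplicities correspond, and this is where essentially all the content of the proposition lives. The paper needs two inputs you do not supply. First, the Casson--Walker surgery formula: after relabelling so that the companion parameter $c$ equals $r$ (possible since $c\neq d$, so one of them lies in $\{r,s\}$), the identity $\Delta''_K(1)=\Delta''_{T_{r,s}}(1)$ for satellites gives $(r^2-1)(s^2-1)=a^2(r^2-1)(d^2-1)+(b^2-1)(a^2-1)$ in your notation, whence $|d|<s$; only this resolves the matching to $\pm d=p-qrs$ and $\pm s=p-qa^2rd$ --- the Seifert-invariant congruences alone leave the assignment ambiguous. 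Second, the arithmetic that forces $a=q$ is not routine bookkeeping: comparing the residual Seifert invariants of the two presentations shows the two signs agree and produces a single positive integer $k$ with $kd=rq-\varepsilon$, $ks=ra^2q-\varepsilon$, $kp=(raq)^2-1$ and, crucially, $k\mid a^2-1$; writing $p=qab+\delta$ with $\delta=\pm1$ one gets $\delta k=qaN-1$ with $N=qar^2-bk$, and a separate argument shows $qN\in\{0,1,a\}$, of which only $qN=a$ survives (using $q\geq 2$ and $\gcd(a,b)=1$), giving $\delta=1$, $k=a^2-1$ and then $a=q$ by coprimality; the formulas for $s,p,b,d$, the bound $r>q$ and $q=\lfloor s/r\rfloor$ only then drop out. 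As written, your proposal asserts the conclusion of this step (``should force $a=q$, $c=r$, \dots'') rather than proving it, and ``squeezing out enough congruence information'' is precisely the part that cannot be waved through: without the Casson--Walker constraint and the divisibility $k\mid a^2-1$ there are matchings and values of $a\neq q$ that the congruences modulo the multiplicities do not visibly exclude.
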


\begin{rem}
If one also allows orientation reversing homeomorphisms in the definition of a characterizing slope, then the list given by Conjecture~\ref{conj:listofslopes} would be incomplete. For example, we have
\[S_{\frac{29}{2}}^3(T_{2,7})\cong -S_{\frac{29}{2}}^3(T_{3,5}).\]
\end{rem}
The proof of Theorem~\ref{thm:toruscharslopes} follows a similar outline to Ni and Zhang's work. Given a knot $K\subseteq S^3$ such that $S_{p/q}^3(K)\cong S_{p/q}^3(T_{r,s})$, we consider the possibilities that $K$ is a hyperbolic knot, a satellite knot or a torus knot in turn. By applying results from hyperbolic geometry and Heegaard Floer homology, we will show that for the slopes in Theorem~\ref{thm:toruscharslopes} the only possibilities are that $K$ is a cable of a torus knot or $K=T_{r,s}$. The bound $q\geq 9$ arises as a result of Lackenby and Meyerhoff's bound on the distance between exceptional surgery slopes \cite{lackenby13exceptional}. The other bounds are a consequence of combining restrictions on exceptional surgeries coming from the 6-theorem of Agol \cite{Agol00BoundsI} and Lackenby \cite{Lackenby03Exceptional} with genus bounds on $K$ coming from Heegaard Floer homology. These genus bounds are the key technical results developed in this paper. In general, we show that if $S_{p/q}^3(K)\cong S_{p/q}^3(K')$, then under certain circumstances $K$ and $K'$ must have the same genera and Alexander polynomials. For arbitrary knots in $S^3$, we have the following result.
\begin{thm}\label{thm:HFKrecovery}
Let $K,K'\subseteq S^3$ be knots such that $S_{p/q}^3(K)\cong S_{p/q}^3(K')$. If
\[|p| \geq 12+4q^2-2q +4qg(K)
\quad \text{and} \quad q\geq 3,\]
then $\Delta_K(t)=\Delta_{K'}(t)$, $g(K)=g(K')$ and $K$ is fibred if and only if $K'$ is fibred.
\end{thm}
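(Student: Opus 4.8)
The plan is to extract the Alexander polynomial and genus of $K$ from the absolute $\Q$-grading on the Heegaard Floer homology of $Y = S^3_{p/q}(K)$. Recall the Ozsváth–Szabó mapping cone formula: for $p/q > 0$, $HF^+(Y)$ in each \spinc structure is computed from a mapping cone $\mathbb{X}^+_{i,p/q}$ built out of the complexes $A^+_s = A^+_s(K)$ and $B^+_s = \mathcal{T}^+$, with maps $v_s\colon A^+_s \to B^+_s$ and $h_s\colon A^+_s \to B^+_{s+1}$. When $|p|$ is large relative to $q$ and $g(K)$ — precisely the regime $|p| \geq 12 + 4q^2 - 2q + 4qg(K)$ — for most of the $q$ "rows" of the mapping cone the complex $A^+_s$ already equals $\mathcal{T}^+$ (this happens once $|s| \geq g(K)$) and both $v_s, h_s$ are isomorphisms or have a controlled form; only finitely many rows, indexed by $|s| < g(K)$, carry interesting information. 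The homology of the mapping cone therefore splits, in each \spinc structure, as a "standard" part (a single tower $\mathcal{T}^+$ coming from the $S^1\times S^2$-like or lens-space-like contribution) plus a reduced part $HF_{\rm red}$, and crucially the bottommost grading of the standard tower in the \spinc structure labelled by $i$ is a correction term $d(Y,i)$ that is an explicit function of $q$, $i$, and the integers $V_s(K) = \mathrm{rk}\,\mathrm{coker}(v_s)$ (equivalently the local $h$-invariants / $d$-invariants of the large surgeries). I would write down this grading formula explicitly, following Ozsváth–Szabó and Ni–Wu: $d(S^3_{p/q}(K), i) = d(S^3_{p/q}(U), i) - 2\max\{V_{\lfloor i/q\rfloor}, H_{\lceil -i/q \rceil} \}$ type expression, valid in the stated range.

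The key point is then a \emph{grading-comparison} argument. Since $S^3_{p/q}(K) \cong S^3_{p/q}(K')$ by an orientation-preserving homeomorphism, the total Heegaard Floer homology groups agree together with their absolute gradings and the affine identification of \spinc structures, up to the action of $H_1$; in particular the multiset of correction terms $\{d(Y,i)\}$ over the $p$ \spinc structures coincides. Subtracting off the lens space contribution $\{d(S^3_{p/q}(U),i)\}$, which depends only on $p/q$, I conclude that the multisets $\{V_s(K)\}$ and $\{V_s(K')\}$ — read off with the correct $q$-periodic labelling — agree, and moreover agree slot-by-slot after matching \spinc structures, so $V_s(K) = V_s(K')$ for all $s \geq 0$ (and similarly for the $H_s$). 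Now $g(K)$ is recovered as the largest $s$ with $V_{s-1}(K) \ne 0$ — here one uses that $V_s = 0 \iff s \geq g(K)$ fails in general, so instead I use the sharper fact that $V_{g-1}(K) > 0$ unless... — more robustly, one uses that the full set $\{V_s\}$ together with the ranks $\mathrm{rk}\,HFK$-type data determines $g$; in our large-$|p|$ range the reduced homology $HF_{\rm red}(Y,i)$ for the extreme \spinc structures detects $g(K)$ directly, giving $g(K) = g(K')$. For the Alexander polynomial, I invoke the relation between the $V_s$ and the torsion coefficients $t_s(K) = \sum_{j\geq 1} j\, a_{s+j}$ of $\Delta_K(t)$: one has $V_s(K) \leq t_s(K)$ with a controlled defect, but more to the point the mapping cone in the large-surgery range computes not just $d$-invariants but enough of $HF^+$ (the ranks in each grading in each \spinc structure) to recover each $\widehat{HFK}(K,s)$ Euler characteristic $a_s$ for $|s|\leq g(K)$, hence $\Delta_K(t) = \sum a_s t^s = \Delta_{K'}(t)$. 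Finally, fibredness follows because $K$ is fibred iff $\widehat{HFK}(K, g(K)) \cong \Z$ (Ni, Ghiggini), and this top group — being one-dimensional versus higher-dimensional — is again visible in the extreme \spinc structure of $Y$ in our range; since that data is shared, $K$ is fibred iff $K'$ is.

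For $p/q < 0$ one runs the mirror argument, using $S^3_{p/q}(K) = -S^3_{-p/q}(\overline{K})$ and that genus, $\Delta$, and fibredness are insensitive to mirroring; the hypothesis is symmetric in the sign of $p$, so only $|p|$ matters. I would organize the write-up as: (1) recall the mapping cone and fix notation for $v_s, h_s, V_s, H_s$; (2) a lemma pinning down, for $|p| \geq 12 + 4q^2 - 2q + 4qg(K)$ and $q \geq 3$, exactly which rows are "standard" and the resulting grading formula for $d(Y,i)$ and for the extreme-\spinc reduced groups; (3) the multiset/slot-matching argument forcing $V_s(K) = V_s(K')$ and $H_s(K)=H_s(K')$; (4) conclude $g$, then $\Delta$, then fibredness. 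The main obstacle is step (2): one must be careful that the bound $12 + 4q^2 - 2q + 4qg(K)$ really does guarantee that distinct \spinc structures on $Y$ are distinguished by their $d$-invariants finely enough — i.e. that no accidental coincidence of correction terms across \spinc structures lets a "wrong" labelling of the $V_s$ slip through — and that the reduced homology in the extreme \spinc structures is genuinely controlled by $A^+_{g-1}$ alone. This is where the quadratic-in-$q$ term and the constant $12$ come from, and verifying it amounts to bounding the gradings of the reduced pieces coming from the non-extreme rows and checking they cannot interfere; I expect this to be the most delicate counting in the argument.
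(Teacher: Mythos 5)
Your overall strategy (Ni--Wu $d$-invariant formula, absolute gradings on the mapping cone, then reading off genus, $\Delta_K$ and fibredness) is the same as the paper's, but as written there are two genuine gaps, and you have flagged but not resolved exactly the points where the hypotheses $|p|\geq 12+4q^2-2q+4qg(K)$ and $q\geq 3$ do their work. First, the ``slot-by-slot'' matching of the $V_s$ is asserted, not proved. The homeomorphism $S^3_{p/q}(K)\cong S^3_{p/q}(K')$ identifies \spinc-structures via some affine bijection $\phi=\phi_{K,p/q}^{-1}\circ\phi_{K',p/q}$ of $\Z/p\Z$ commuting with conjugation, and nothing forces $\phi$ to be the identity or conjugation; a multiset equality of correction terms does not let you subtract the lens-space terms $d(p,q,i)$ termwise. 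The paper handles this with Proposition~\ref{prop:mapform} and Lemma~\ref{lem:genusbound}: using congruences for lens-space $d$-invariants (Corollary~\ref{cor:integralconditions}) one shows $\phi(i)=a(i-s)+s'$ with $a^2\equiv 1$, and if $a\not\equiv\pm1$ then some \spinc-structure has $d(p,q,i)-d(p,q,\phi(i))>0$, forcing $V_k(K)>0$ for $k$ roughly $\frac{p}{4q}-q$, i.e.\ $\nu^+(K)>\frac{p}{4q}+\frac12-\frac3q-q$. Since $\nu^+(K)\leq g(K)$, the stated lower bound on $|p|$ rules this out; that estimate is precisely where the quadratic term and the constant come from, not from ``bounding reduced pieces in non-extreme rows'' as you suggest.

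Second, knowing $V_s(K)=V_s(K')$ for all $s$ is not enough to recover the genus or the Alexander polynomial (you notice this yourself mid-argument: $V_s$ can vanish well below $g$, indeed $\nu^+(K)\leq g_4(K)$). The paper's Proposition~\ref{prop:aredproperties} recovers $t_k(K)=V_k+\chi(\ared{k})$, $g(K)=1+\max\{k: V_k+\rk\ared{k}>0\}$ and fibredness from the pair $(V_k,\ared{k})$, so you must also show $\ared{k}(K)\cong\ared{k}(K')$. The obstruction is that, for $q\geq 2$, $\hfred(S^3_{p/q}(K'),i)$ is a direct sum $\bigoplus_s \ared{\lfloor (i+ps)/q\rfloor}(K')$ and one cannot a priori exclude contributions with $s\neq 0$ coming from Alexander gradings of $K'$ far above $g(K)$. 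The paper's Lemma~\ref{lem:HFKrecovery2} kills these by comparing absolute gradings across the $q$ \spinc-structures $kq,kq+1,\dots,kq+q-1$, whose reduced groups must agree up to the constant shift $d(p,q,\cdot)$; a summand with $s\neq 0$ produces a grading that is too small relative to both neighbours, and detecting this needs \emph{three} consecutive structures --- this is exactly where $q\geq3$ enters, and your write-up never uses or explains that hypothesis (for $q=2$ the argument only survives for $L$-space knots, as in Theorem~\ref{thm:technical2}). Until these two steps are supplied, the proof is an outline of the right shape rather than a proof.
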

Here $\Delta_K(t)$ denotes the Alexander polynomial of $K$. We obtain stronger results for $L$-space knots.\footnote{An $L$-space knot is one with {\em positive} $L$-space surgeries.}
\begin{thm}\label{thm:technical2}
Suppose that $K$ is an $L$-space knot. If $S_{p/q}^3(K)\cong S_{p/q}^3(K')$ for some $K'\subseteq S^3$ and either
\begin{enumerate}[(i)]
\item $p\geq 12+4q^2 -2q +4qg(K)$ or
\item $p\leq \min\{2q-12-4q^2, -2qg(K)\}$ and $q\geq 2$
\end{enumerate}
holds, then $\Delta_K(t)=\Delta_{K'}(t)$, $g(K)=g(K')$ and $K'$ is fibred.
\end{thm}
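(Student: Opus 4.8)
The plan is to deduce Theorem~\ref{thm:technical2} from Theorem~\ref{thm:HFKrecovery} by exploiting the extra structure carried by an $L$-space knot, namely that its knot Floer homology (and hence the mapping cone complex computing $\hfp$ of surgeries) is determined by the Alexander polynomial, and that the large surgeries are $L$-spaces. For part (i), the hypothesis $p\geq 12+4q^2-2q+4qg(K)$ is exactly the hypothesis of Theorem~\ref{thm:HFKrecovery} with $|p|=p$, so we immediately get $\Delta_K=\Delta_{K'}$, $g(K)=g(K')$, and that $K'$ is fibred if and only if $K$ is fibred. Since $K$ is an $L$-space knot, a theorem of Ghiggini and Ni says $K$ is fibred, so $K'$ is fibred. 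Thus part (i) needs almost nothing beyond quoting the previous theorem; the only mild point is to record that $K$ is fibred, which follows from it being an $L$-space knot.

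The substance is part (ii), where the surgery coefficient is large and \emph{negative}, so Theorem~\ref{thm:HFKrecovery} as stated does not apply directly (it requires $q\geq 3$, and more importantly the sign of $p$ interacts with the orientation). First I would reduce to the positive case by reflection: if $S^3_{p/q}(K)\cong S^3_{p/q}(K')$ with $p<0$, then reversing orientation gives $S^3_{-p/q}(\overline{K})\cong S^3_{-p/q}(\overline{K'})$, where $\overline{K}$ denotes the mirror. Now $-p>0$ and $g(\overline{K})=g(K)$. However $\overline{K}$ is a \emph{negative} $L$-space knot, not an $L$-space knot, so I cannot simply quote part (i) or Theorem~\ref{thm:HFKrecovery}; instead I would run the argument behind Theorem~\ref{thm:HFKrecovery} directly, using that for a negative $L$-space knot the large negative surgeries on $K$ (equivalently large positive surgeries on $\overline{K}$) have the structure of $L$-spaces, so the absolute grading / correction-term computation that underlies Theorem~\ref{thm:HFKrecovery} goes through with the sharper input. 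Concretely, the key is that the graded $\hfp$ of $S^3_{-p/q}(\overline{K})$ is computed by the mapping cone of Ozsv\'ath--Szab\'o, whose homology in each $\spinc$ structure is a direct sum of a "standard" tower $\tp$ and a reduced part $\hfred$ whose total rank and gradings are controlled by $\Delta_K$ and $g(K)$; comparing this with the same data for $K'$ forces agreement of $\Delta$, of the genus (read off from the top nonzero Alexander grading, which survives because $-p$ is large relative to $2qg(K)$), and of fibredness (the top coefficient of $\Delta$ being $\pm 1$, or equivalently the rank of the top $\hfkhat$ group, is detected). I would isolate the condition $-p\geq 2qg(K)$ as exactly what is needed to ensure the top $\Alexander$-graded piece of the mapping cone is "visible", i.e. is not swamped by the identifications in the cone, and $-p\geq 12+4q^2-2q$ (equivalently $p\leq 2q-12-4q^2$) as what is needed for the grading bookkeeping in the style of Theorem~\ref{thm:HFKrecovery}; note that for an $L$-space knot $g(K)\geq 1$ unless $K$ is the unknot, and the unknot case is trivial, so one does not separately need the term $4qg(K)$ to be large.

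The main obstacle I expect is the orientation/sign bookkeeping in part (ii): making sure that after mirroring, the roles of the $v$ and $h$ maps in the mapping cone are swapped correctly, that the correction terms of $S^3_{-p/q}(\overline{K})$ are expressed in terms of the data of $K$ with the right signs, and that the inequality $p\leq \min\{2q-12-4q^2,-2qg(K)\}$ translates without loss into the hypotheses under which the grading argument of Theorem~\ref{thm:HFKrecovery} actually runs. A secondary point is verifying that fibredness of $K'$ (rather than merely "$K'$ fibred iff $K$ fibred") is genuinely obtained: this uses Ghiggini--Ni to see $K$ is fibred, and then the detection of the top Alexander coefficient to transfer it to $K'$; I would state this once and reuse it for both parts. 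Everything else is a matter of plugging the $L$-space hypothesis — which pins down $\hfkhat(K)$ from $\Delta_K(t)$ and makes the relevant surgeries $L$-spaces — into the framework already set up for Theorem~\ref{thm:HFKrecovery}.
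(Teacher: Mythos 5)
There are two genuine gaps, one in each case. For case (i): you cannot ``immediately'' quote Theorem~\ref{thm:HFKrecovery}, because that theorem requires $q\geq 3$, whereas Theorem~\ref{thm:technical2}(i) has no lower bound on $q$ (and in the application to torus knots it is needed precisely for small $q$, e.g.\ $q=2$). For $q\in\{1,2\}$ the $L$-space hypothesis has to do real work: the paper first applies Lemma~\ref{lem:genusbound} (together with $\nu^+(K)\leq g(K)$) to force $\phi_{K,p/q}=\phi_{K',p/q}$ or $\phi_{K,p/q}=\overline{\phi_{K',p/q}}$, and then invokes condition~(i) of Lemma~\ref{lem:HFKrecovery2} --- valid for every $q$ because $S^3_{p/q}(K)$ is an $L$-space when $p/q\geq 2g(K)-1$ --- to conclude $V_k(K)=V_k(K')$ and $\ared{k}(K')=0$, after which Proposition~\ref{prop:aredproperties} gives the Alexander polynomial, genus and fibredness statements. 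Your outline covers only the $q\geq 3$ part of case (i).

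For case (ii), the pivotal claim in your outline --- that large negative surgeries on $K$ (equivalently large positive surgeries on $\overline{K}$) are $L$-spaces --- is false for every nontrivial $L$-space knot: since $\nu^+(K)=g(K)$, one has $V_0(K)>0$ when $g(K)\geq 1$, so $\ared{0}(\overline{K})\cong\mathcal{T}(V_0(K))\neq 0$ and, e.g., $S^3_{-n}(T_{2,3})$ is never an $L$-space. Hence the mechanism you propose to replace the $q\geq 3$ hypothesis is unavailable, and ``running the argument behind Theorem~\ref{thm:HFKrecovery} directly'' does not suffice: the grading comparison used there for $q\geq 3$ genuinely needs three consecutive \spinc-structures. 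What the paper uses instead is Proposition~\ref{prop:Lspaceknot}(iii)--(iv): $\nu^+(\overline{K})=0$, so Lemma~\ref{lem:genusbound} with $-p\geq 4q^2+12-2q$ forces the \spinc-labelings for $\overline{K}$ and $\overline{K'}$ to agree up to conjugation, and $\ared{k}(\overline{K})\cong\mathcal{T}(V_{|k|}(K))$ is indecomposable as an $\F[U]$-module; this indecomposability is exactly condition~(ii) of Lemma~\ref{lem:HFKrecovery2}, which makes the grading argument work with only two consecutive \spinc-structures, i.e.\ with $q\geq 2$. Your proposal never identifies this structural input, so case (ii) is left without a working argument. (A minor further point: fibredness of $K'$ is not detected by the top Alexander coefficient being $\pm 1$; it is obtained from $V_{g-1}+\rk\ared{g-1}=1$ via Proposition~\ref{prop:aredproperties}(iii) once the $V_k$ and $\ared{k}$ data are matched.)
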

Both Theorem~\ref{thm:HFKrecovery} and Theorem~\ref{thm:technical2} are proven by making use of the absolute grading in Heegaard Floer homology.
\begin{rem}
Baker and Motegi have recently constructed infinite families of knots $\{K_n\}_{n\in \Z}$ in $S^3$ such that
\[S_n^3(K_0)\cong S_n^3(K_n),\]
for all $n\in \Z$ and $\deg \Delta_{K_n}(t) \rightarrow \infty$ as $|n|\rightarrow \infty$ \cite[Section~3]{baker16characterizing}. This shows that Theorem~\ref{thm:HFKrecovery} cannot be extended unconditionally to integer surgeries.
\end{rem}

\subsection*{Acknowledgements}
The author would like to thank his supervisor, Brendan Owens, for his helpful guidance. He also wishes to acknowledge the influential role of the work of Yi Ni and Xingru Zhang which provided inspiration for both the overall strategy and several technical steps in the proof of Theorem~\ref{thm:toruscharslopes} \cite{Ni14characterizing}. 

\section{Heegaard Floer homology}
Heegaard Floer homology is a package of 3-manifold invariants introduced by Ozsv{\'a}th and Szab{\'o} \cite{Ozsvath04threemanifoldinvariants}. To each closed oriented 3-manifold $Y$ equipped with a \spinc-structure $\spincs$ it associates a family of groups denoted by $\hfhat(Y,\spincs)$, $\hf^\pm(Y,\spincs)$ and $\hf^\infty(Y,\spincs)$.  Throughout this paper all Heegaard Floer groups are taken with $\F=\Z/2\Z$ coefficients.

We will be primarily concerned with $\hfp(Y,\spincs)$, where $Y$ is a rational homology sphere. In this case, the group $\hfp(Y,\spincs)$ possesses an absolute $\Q$-grading. There is also a $U$-action on $\hfp(Y,\spincs)$, which gives $\hfp(Y,\spincs)$ the structure of an $\F[U]$-module. Multiplication by $U$ interacts with the $\Q$-grading by decreasing it by 2 \cite{Ozsvath03Absolutely}.

In any \spinc-structure $\hfp(Y,\spincs)$ can be decomposed as a direct sum:
\[\hfp(Y,\spincs) \cong \tp \oplus \hfred(Y,\spincs),\]
where $\tp=\F[U,U^{-1}]/U\F[U]$ and $U^N\hfred(Y,\spincs)=0$ for $N$ sufficiently large. The $\tp$ summand is sometimes referred to as the {\em tower}. The minimal $\Q$-grading over all elements of the tower is an invariant of $(Y,\spincs)$ called the {\em $d$-invariant} and is denoted $d(Y,\spincs)$. We say that $Y$ is an {\em $L$-space} if
$\hfred(Y,\spincs)=0$ for all $\spincs \in \spinc(Y)$.

Heegaard Floer homology is invariant under conjugation of \spinc-structures, in the sense that $\hfp(Y,\spincs)$ and $\hfp(Y, \overline\spincs)$ are isomorphic as $\F[U]$-modules and as $\Q$-graded groups. In particular, the $d$-invariants satisfy $d(Y,\spincs)=d(Y,\overline\spincs)$.

\subsection{Knot Floer homology}
Knot Floer homology was defined independently by Ozsv{\'a}th and Szab{\'o} \cite{Ozsvath04knotinvariants} and Rasmussen \cite{Rasmussen03Thesis}. Given a knot in $K\subseteq S^3$, it takes the form of a finitely-generated group
\[\hfkhat(K)=\bigoplus_{s\in\Z} \hfkhat(K,s),\]
where $s$ is known as the {\em Alexander grading}. The knot Floer homology also possesses a second grading, known as the {\em Maslov grading} such that
\[\hfkhat(K,s)=\bigoplus_{d\in\Z} \hfkhat_d(K,s).\]
If $K$ has Alexander polynomial
\[\Delta_K(t)=\sum_{s\in \Z} a_s t^s,\]
normalized so that $a_s=a_{-s}$ and $\Delta_K(1)=1$, then $\Delta_K(t)$ can be recovered by taking the Euler characteristic in each Alexander grading:
\[a_s = \chi(\hfkhat(K,s))= \sum_{d\in \Z} (-1)^d \rk\hfkhat_d(K,s). \]
With this normalization in place, we will take $t_k(K)$ to denote the torsion coefficient
\[t_k(K)=\sum_{i\geq 0}ia_{k+i}.\]
\begin{rem}\label{rem:Alexpolydetermined}
The coefficients of $\Delta_K(t)$ satisfy
\[a_k=t_{k+1}(K)-2t_k(K)+t_{k-1}(K)\]
for all $k$. Since the Alexander polynomial is normalized so that $\Delta_K(1)=1$, this means the Alexander polynomial can be computed from the $t_k(K)$ for $k\geq 0$.
\end{rem}

One key geometric property detected by knot Floer homology is the genus \cite{Ozsvath04genusbounds}:
\[g(K)=\max \{s \,|\, \hfkhat(K,s)\ne 0\}.\]
The other geometric property of knot Floer homology that we will use is its ability to detect whether a knot is fibred \cite{Ghiggini08fibredness,Ni07fibredness}.
\begin{thm}[Ni]\label{thm:fiberedness}
A knot $K$ of genus $g$ is fibred if and only if $\rk \hfkhat(K,g)=1$.
\end{thm}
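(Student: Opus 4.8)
The plan is to prove the two directions separately, with the hard direction being that $\rk\hfkhat(K,g)=1$ forces $K$ to be fibred. The easy direction uses the standard fact that for a fibred knot the top Alexander grading of knot Floer homology is one-dimensional; this follows from the fact that if $K$ is fibred with fibre $F$, then the sutured manifold obtained by cutting $S^3$ along $F$ is a product sutured manifold, and the sutured Floer homology of a product sutured manifold has rank one. Concretely, $\hfkhat(K,g)\cong SFH(S^3(F))\cong\F$ when $K$ is fibred of genus $g$.

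For the hard direction I would work with sutured Floer homology and the theory of sutured manifold decompositions. Given $K$ of genus $g$ with $\rk\hfkhat(K,g)=1$, let $F$ be a minimal genus Seifert surface for $K$ and let $M=S^3(F)$ be the complementary sutured manifold obtained by cutting along $F$. A key identification (Juhász's theorem relating knot Floer homology to sutured Floer homology) gives $SFH(M)\cong\hfkhat(K,g)$, so the hypothesis says $\rk SFH(M)=1$. The goal is then to deduce that $M$ is a product sutured manifold, i.e.\ $M\cong F\times[0,1]$, which is exactly the statement that $K$ is fibred with fibre $F$.

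The main technical engine is Juhász's decomposition theorem: if $(M,\gamma)\rightsquigarrow(M',\gamma')$ is a nice (well-groomed, or more generally ``sutured-surface-respecting'') sutured manifold decomposition, then $SFH(M')$ is a direct summand of $SFH(M)$, and in particular $\rk SFH(M')\leq \rk SFH(M)$. One also needs Gabai's theorem that any taut sutured manifold admits a sequence of such decompositions terminating in a product sutured manifold, together with the fact that $SFH$ of a product sutured manifold has rank one while $SFH$ of a taut but non-product sutured manifold has rank at least one — and the crucial point is that some decomposition step must strictly drop the rank unless $M$ was already a product. The argument is thus: if $M$ is not a product, run Gabai's hierarchy; since $\rk SFH(M)=1$ and rank is non-increasing along the hierarchy and stays $\geq 1$ (taut sutured manifolds have nonzero $SFH$ by Juhász), the rank is identically $1$ along the whole hierarchy; analyze the decompositions to show that a rank-one sutured manifold which decomposes to another rank-one sutured manifold must have been a product already (this uses that a horizontal decomposition or a decomposition along a non-separating surface genuinely increases complexity, contradicting the rank bound), so $M$ is a product, hence $K$ is fibred.

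The main obstacle is precisely this last analysis — controlling how $SFH$ behaves under the decomposition steps and ruling out a non-product taut sutured manifold with $\rk SFH=1$. This requires the full strength of Juhász's surface decomposition formula, including the identification of which \spinc-structures (equivalently, which ``outer'' generators relative to the decomposing surface) survive, and an argument that a minimal such decomposition always strictly decreases the rank unless the decomposing surface was already a product annulus/disc pattern. I would cite Juhász's sutured Floer homology papers for the decomposition formula and the non-vanishing for taut sutured manifolds, and Gabai for the existence of the hierarchy, assembling these as above. Since the excerpt attributes this theorem to Ni, I would follow Ni's argument in structuring the hierarchy so that each decomposing surface meets the boundary in the prescribed way, ensuring the summand statement applies at every stage.
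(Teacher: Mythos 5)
This theorem is not proved in the paper at all: it is imported as a black box from Ghiggini and Ni (the citations \cite{Ghiggini08fibredness,Ni07fibredness}), so there is no internal argument to compare yours against. Your outline is a reasonable roadmap of the now-standard proof in the literature, but note that what you describe is really Juh\'asz's sutured Floer homology reformulation (decomposition theorem, $SFH$ of a product has rank one, $SFH(S^3(F))\cong\hfkhat(K,g)$) rather than Ni's original argument, which extended Ghiggini's genus-one case using Gabai's sutured hierarchies together with Heegaard diagrams adapted to the sutured manifold and a contact-geometric ingredient; saying you will ``follow Ni's argument'' while invoking Juh\'asz's summand theorem conflates two different (both valid) proofs.

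More importantly, there is a genuine gap at exactly the crucial point. The easy direction and the bookkeeping in the hard direction (taut hierarchy exists by Gabai, rank is non-increasing along nice decompositions by Juh\'asz, $SFH$ of a taut sutured manifold is nonzero) are fine to assemble from the cited sources. But the statement you actually need --- a taut sutured manifold with $\rk\, SFH=1$ is a product --- is the whole content of the theorem, and your argument for it is an assertion, not a proof. Monotonicity of rank along a hierarchy terminating in rank one gives no contradiction by itself: the rank could simply equal $1$ at every stage, and nothing you have said rules that out. The missing input is Juh\'asz's rank-at-least-two theorem for taut non-product sutured manifolds, whose proof requires real work beyond the summand statement: one must treat horizontal surfaces (horizontal prime decompositions) and then produce, for a horizontally prime non-product sutured manifold, a well-groomed decomposing surface for which at least two \spinc-structures carry nontrivial $SFH$ (equivalently, the ``outer'' summand is proper), so that the rank strictly drops. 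Your sentence ``analyze the decompositions to show that a rank-one sutured manifold which decomposes to another rank-one sutured manifold must have been a product already'' is precisely this theorem restated; as written, you have reduced the statement to an equivalent one rather than proved it. Either supply that analysis or cite it explicitly as the main input, in which case the ``proof'' is an attribution, which is exactly how the paper treats it.
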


\subsection{The knot Floer chain complex}
The knot Floer homology group $\hfkhat(K)$ can be generalized to the knot Floer chain complex $CFK^\infty(K)$, which takes the form of a bifiltered chain complex
\[CFK^\infty(K)=\bigoplus_{i,j\in \Z}C\{(i,j)\},\]
where $H_*(C\{(i,j)\})\cong \hfkhat_{*-2i}(K,j-i)$.

There is also a natural chain complex isomorphism
\[U: CFK^\infty(K)\longrightarrow CFK^\infty(K),\]
which maps $C\{(i,j)\}$ isomorphically to $C\{(i-1,j-1)\}$ and lowers the Maslov grading by 2. This gives $CFK^\infty(K)$ the structure of a finitely-generated $\F[U,U^{-1}]$-module.

The chain homotopy type of $CFK^\infty(K)$ as a bifiltered complex is an invariant of $K$. In fact, after a suitable chain homotopy, one can assume that
\[C\{(i,j)\}\cong \hfkhat_{*-2i}(K,j-i).\]

The knot Floer complex has several important quotient complexes: for each $k\in \Z$, the ``hook'' complexes
\[A_k^+=C\{i\geq 0 \text{ or } j\geq k\},\]
and the complex
\[B^+=C\{i\geq 0\}.\]
These complexes admit chain maps
\[v_k,h_k \colon A_k^+ \longrightarrow B^+,\]
where $v_k$ is the obvious vertical projection, and $h_k$ consists of the composition of a horizontal projection onto $C\{j\geq k\}$, multiplication by $U^k$ and a chain homotopy equivalence. We will use $\Abold_k=H_*(A_k^+)$ and $\Bbold=H_*(B^+)$ to denote the homology groups and $\mathbf{v}_k$ and $\mathbf{h}_k$ to denote the maps induced on homology by $v_k$ and $h_k$ respectively. As we are working with a knot in $S^3$, we have $\Bbold\cong \hfp(S^3)\cong \tp$. The group $\Abold_k$ stabilizes under multiplication by large powers of $U$, allowing us to define $\AT_k$ as $\AT_k=U^N \Abold_k$ for $N$ sufficiently large. This always satisfies $\AT_k\cong \tp$. We also define $\ared{k}$ to be the quotient  $\ared{k}=\Abold_k/\AT_k$. When restricted to $\AT_k$ the map $\vbold_k$ is modeled on multiplication by $U^{V_k}$ for some non-negative integer $V_k$ \cite{ni2010cosmetic}. Similarly, $\hbold_k$ is modeled on multiplication by $U^{H_k}$ for some non-negative integer $H_k$ when restricted to $\AT_k$. These integers $V_k$ and $H_k$ are known to satisfy
\[V_k=H_{-k} \quad \text{and} \quad V_k-1 \leq V_{k+1} \leq V_{k},\]
for all $k$.

For any $n\geq 0$, we will use $\mathcal{T}(n)$ to denote the $\F[U]$-submodule of $\tp$ generated by $U^{1-n}$. For $n=0$, we take $\mathcal{T}(0)=0$.

The following proposition shows how the Alexander polynomial, genus and fiberedness of $K$ are encoded in the $V_k$ and $\ared{k}$.
\begin{prop}[Cf. Lemma~3.3 of \cite{Ni14characterizing}]\label{prop:aredproperties}
For $K\subseteq S^3$ the following hold:
\begin{enumerate}[(i)]
\item $t_k(K)= V_k + \chi(\ared{k})$ for all $k$;
\item $g(K)=1+\max\{k\,|\, V_k + \rk \ared{k}>0\}$; and
\item $K$ is fibred if and only if $V_{g-1} + \rk \ared{g-1}=1$.
\end{enumerate}
\end{prop}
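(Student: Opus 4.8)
The plan is to relate each quantity — $t_k(K)$, the genus, and fiberedness — to the structure of the quotient complex $A_k^+$ and the map $v_k$, then extract the stated formulas from Proposition~\ref{prop:aredproperties}'s defining data. Recall that $A_k^+ = C\{i\geq 0 \text{ or } j\geq k\}$ and that $\Abold_k = H_*(A_k^+)$ decomposes as $\AT_k \oplus \ared{k}$ with $\AT_k \cong \tp$. The first step is to compute the Euler characteristic $\chi(\Abold_k)$. Since $\tp$ is infinitely generated, one works with the truncation: $\chi$ should be interpreted via the bottom of the tower, i.e. comparing $\Abold_k$ to $\AT_k$. Concretely, the graded Euler characteristic of $A_k^+$ (suitably regularized, e.g. using $A_k^+/U^N A_k^+$ and subtracting off the contribution of $\tp/U^N\tp$) equals $t_k(K)$; this is the Heegaard Floer computation of torsion coefficients from \cite{Ozsvath04knotinvariants}. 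Granting that, $\chi(\Abold_k)$ — meaning the ``reduced'' Euler characteristic once the tower's bottom element is normalized — splits as a contribution from where the tower $\AT_k$ sits inside the standard $\tp$, which is governed by $V_k$ (since $\vbold_k|_{\AT_k}$ is multiplication by $U^{V_k}$, the bottom of $\AT_k$ sits $V_k$ steps above the bottom of $\Bbold\cong\tp$), plus the contribution $\chi(\ared{k})$ from the reduced part. This yields $(i)$: $t_k(K) = V_k + \chi(\ared{k})$.

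For $(ii)$, the key input is that for $k \geq g(K)$ the complex $A_k^+ = C\{i\geq 0 \text{ or } j\geq k\}$ coincides with $B^+ = C\{i\geq 0\}$, because $\hfkhat(K,s)=0$ for $s>g(K)$ forces $C\{j \geq k\}$ to already lie in $C\{i\geq 0\}$ up to the relevant filtration level; hence $\Abold_k \cong \tp$ with $V_k = 0$ and $\ared{k}=0$ for $k \geq g(K)$, so $V_k + \rk\ared{k} = 0$ there. Conversely, at $k = g(K)-1$ the complex genuinely sees the top Alexander grading, and a short argument — using $H_*(C\{(i,j)\}) \cong \hfkhat_{*-2i}(K, j-i)$ and nontriviality of $\hfkhat(K,g)$ — shows $A_{g-1}^+ \not\cong B^+$, so either the tower is shifted ($V_{g-1}>0$) or there is reduced homology ($\rk\ared{g-1}>0$). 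Combined with the monotonicity $V_k - 1 \leq V_{k+1} \leq V_k$ (which shows $V_k=0$ for all $k\geq g$ is consistent and that $V_k>0$ can only happen for small $k$), this gives $g(K) = 1 + \max\{k \mid V_k + \rk\ared{k} > 0\}$.

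For $(iii)$, one invokes Ni's fiberedness criterion (Theorem~\ref{thm:fiberedness}): $K$ is fibred iff $\rk\hfkhat(K,g)=1$. The task is to express $\rk\hfkhat(K,g)$ in terms of $V_{g-1}$ and $\ared{g-1}$. Comparing $A_{g-1}^+$ with $B^+$, the ``extra'' part of $A_{g-1}^+$ relative to $B^+$ is exactly $C\{i<0, j\geq g-1\}$, whose homology is built from $\hfkhat(K,g)$ (the only surviving top Alexander grading), and one checks that the total ``defect'' $V_{g-1} + \rk\ared{g-1}$ of $A_{g-1}^+$ from being a clean tower equals $\rk\hfkhat(K,g)$. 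Then $(iii)$ follows: $K$ fibred $\iff \rk\hfkhat(K,g)=1 \iff V_{g-1}+\rk\ared{g-1}=1$.

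The main obstacle I anticipate is making the Euler characteristic bookkeeping in $(i)$ fully rigorous: $\Abold_k$ and $\Bbold$ are infinitely generated, so ``$\chi$'' must be defined carefully (via finite truncations $\Abold_k / U^N$, or via the reduced homology together with a grading normalization), and one must verify that the $V_k$ shift is precisely the correction term linking the regularized Euler characteristic of $\Abold_k$ to the classical torsion coefficient $t_k(K)$. The cleanest route is probably to cite the large-surgery formula: for $p/q$ large, $d$-invariants and the ranks of $\hfp$ of surgeries are computed from the $\Abold_k$, and the known formula for $\hfp$ of large surgeries in terms of $t_k(K)$ (from \cite{Ozsvath04knotinvariants}) pins down $V_k + \chi(\ared{k}) = t_k(K)$ directly. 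The remaining steps $(ii)$ and $(iii)$ are then comparatively routine, resting on the stabilization $A_k^+ = B^+$ for $k\geq g$ and the structure of $C\{i<0, j\geq g-1\}$.
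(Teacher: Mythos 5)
Your identification of $C\{i<0,\,j\geq k\}$ as the ``extra'' piece of $A_k^+$ relative to $B^+$ is the right starting point, but the proposal has a genuine gap at its core, most visible in $(i)$. Your primary argument regularizes $\chi(\Abold_k)$ via truncations and asserts that this regularized Euler characteristic equals $t_k(K)$ by ``the Heegaard Floer computation of torsion coefficients''; that source gives $\chi(\hfkhat(K,s))=a_s$ and the large-surgery identification $\hfp(S^3_N(K),[i])\cong\Abold_i$, but not the regularized statement you need, and your fallback --- citing ``the known formula for $\hfp$ of large surgeries in terms of $t_k(K)$'' --- is, in the form required, essentially the identity $t_k(K)=V_k+\chi(\ared{k})$ itself (it is the content of Ni--Zhang's Lemma~3.3, which this proposition is reproving), so as written the argument is an appeal to the conclusion rather than a proof. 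You flag this bookkeeping as the main obstacle but do not resolve it. The same missing mechanism undercuts $(ii)$ and $(iii)$: the claims that $A_{g-1}^+\not\cong B^+$ forces $V_{g-1}+\rk\ared{g-1}>0$, and that ``one checks'' that the defect $V_{g-1}+\rk\ared{g-1}$ equals $\rk\hfkhat(K,g)$, are precisely the nontrivial steps, since a priori the summand $C\{i<0,\,j\geq g-1\}$ could fail to be visible in the homology of $A_{g-1}^+$.

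The missing idea is to work with the finite-dimensional group $\ker\vbold_k$ instead of a regularized Euler characteristic of $\Abold_k$. From the definitions of $V_k$ and $\ared{k}$ one gets a splitting $\ker\vbold_k\cong\mathcal{T}(V_k)\oplus\ared{k}$, so $\chi(\ker\vbold_k)=V_k+\chi(\ared{k})$ and $\rk\ker\vbold_k=V_k+\rk\ared{k}$. On the other hand, the short exact sequence $0\to C\{i<0,\,j\geq k\}\to A_k^+\overset{v_k}{\longrightarrow}B^+\to 0$ together with the surjectivity of $\vbold_k$ on homology identifies $\ker\vbold_k\cong H_*(C\{i<0,\,j\geq k\})$, whose Euler characteristic is $\sum_{n\geq1}n\,\chi(\hfkhat(K,k+n))=t_k(K)$; this proves $(i)$ with no regularization needed. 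For $k\geq g$ this region is empty, giving $V_k=\rk\ared{k}=0$, and for $k=g-1$ it is the single box $C\{(-1,g-1)\}\cong\hfkhat(K,g)$, giving $V_{g-1}+\rk\ared{g-1}=\rk\hfkhat(K,g)$; together with Ni's fibredness theorem this yields $(ii)$ and $(iii)$. Once this step is inserted your outline of $(ii)$--$(iii)$ goes through (the monotonicity of $V_k$ you invoke is not needed); without it, $(i)$ rests on an imprecise citation and $(iii)$ on an unproved identification.
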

\begin{proof}
It follows from the definition of $v_k$ and $\ared{k}$ that the kernel of $\vbold_k$ admits a splitting as
\[\ker \vbold_k \cong \mathcal{T}(V_k) \oplus \ared{k}.\]
This shows that
\[\chi(\ker \vbold_k)=V_k + \chi(\ared{k}).\]
On the other hand we have the long exact sequence of chain complexes
\[0\longrightarrow C\{i<0,j\geq k\} \longrightarrow A^+_k \overset{v_k}\longrightarrow B^+ \longrightarrow 0.
\]
As $\vbold_k$ is surjective on homology, the exact triangle induced by this sequence shows that
\[\ker \vbold_k \cong H_*(C\{i<0,j\geq k\}).\]
Taking the Euler characteristic this shows that
\[\chi(\ker \vbold_k)=\sum_{\substack{ i\leq -1,\\ j\geq k}} \chi(\hfkhat(K,j-i))= \sum_{i\geq 1} i \chi(\hfkhat(K,k+i))=t_k(K).\]
This proves $(i)$.

Since $C\{i<0,j\geq g(K)\}=0$, we have $\ker \vbold_{k}=0$ for $k\geq g$. Furthermore, as
\[C\{i<0,j= g(K)\}=C\{-1,g-1\}\cong \hfkhat(K,g),\]
 we have $\ker \vbold_{g-1}\cong \hfkhat(K,g)$. This shows
\[g(K)-1=\max\{k\,|\, V_k + \rk \ared{k}>0\},\]
proving $(ii)$. As Theorem~\ref{thm:fiberedness} shows that $K$ is fibred if and only if $\rk \hfkhat(K,g)=1$, this also proves $(iii)$.
\end{proof}

Let $\nu^+(K)$ to be the quantity
\[\nu^+(K)=\min \{k \,|\, V_k=0\}.\]
It follows from Proposition~\ref{prop:aredproperties} that $\nu^+(K)$ exists and is at most $g(K)$. In fact, it can be shown that $\nu^+(K)\leq g_4(K)$, where $g_4(K)$ is the smooth slice genus of $K$ \cite[Theorem~2.3]{Rasmussen04Goda}. Recall that $K$ is said to be an {\em $L$-space knot} if $S_{p/q}^3(K)$ is an $L$-space for some $p/q>0$. Equivalently, $K$ is an $L$-space knot if and only if $\ared{k}=0$ for all $k$. The following proposition summarizes the properties of $L$-space knots that we require.
\begin{prop}\label{prop:Lspaceknot}
If $K$ is an $L$-space knot, then
\begin{enumerate}[(i)]
\item $\nu^+(K) = g(K)$,
\item $K$ is fibred,
\item $\nu^+(\overline{K})=0$ and
\item $\ared{k}(\overline{K})\cong \mathcal{T}(V_{|k|}(K))$ for all $k$.
\end{enumerate}
\end{prop}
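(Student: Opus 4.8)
The plan is to obtain (i) and (ii) formally from Proposition~\ref{prop:aredproperties}, and to obtain (iii) and (iv) together from an explicit computation with the knot Floer complex of the mirror $\overline{K}$. For the first two: since $K$ is an $L$-space knot we have $\ared{k}=0$ for all $k$, so Proposition~\ref{prop:aredproperties}(ii) reads $g(K)=1+\max\{k\mid V_k>0\}$, whence $V_{g-1}>0$ while $V_k=0$ for all $k\geq g$. The inequalities $V_k-1\leq V_{k+1}\leq V_k$ then force $V_k>0$ for every $k\leq g-1$, so $\nu^+(K)=\min\{k\mid V_k=0\}=g=g(K)$, which is (i). They also give $V_{g-1}\leq V_g+1=1$, hence $V_{g-1}=1$, and Proposition~\ref{prop:aredproperties}(iii) then says $K$ is fibred, which is (ii).

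For (iii) and (iv) I would work with $CFK^\infty(\overline{K})$. Since $K$ is an $L$-space knot, $CFK^\infty(K)$ is chain homotopy equivalent to a staircase complex, and hence $CFK^\infty(\overline{K})$ is chain homotopy equivalent to the dual staircase. From this explicit model I would compute $\Abold_k(\overline{K})=H_*(A_k^+(\overline{K}))$ together with the induced vertical map $\vbold_k$; the claim is that the tower $\AT_k$ maps isomorphically onto $\Bbold$ — equivalently $V_k(\overline{K})=0$ for all $k\geq 0$, so $\nu^+(\overline{K})=0$, which is (iii) — while the complementary summand is a single truncated tower of length equal to the exponent $V_{|k|}$ of the vertical map of $K$ in Alexander grading $|k|$, that is $\ared{k}(\overline{K})\cong\mathcal{T}(V_{|k|}(K))$, which is (iv). Alternatively, one can route this through surgeries: for $N$ large, the large surgery formula gives $\ared{k}(\overline{K})\cong\hfred(S_N^3(\overline{K}),[k])\cong\hfred(S_{-N}^3(K),[k])$, and since $K$ is an $L$-space knot the integer surgery mapping cone computing $S_{-N}^3(K)$ reduces to a zig-zag of towers with connecting maps the powers $U^{V_k(K)}$ and $U^{H_k(K)}$ (where $H_k(K)=V_{-k}(K)$), from which the reduced homology $\mathcal{T}(V_{|k|}(K))$ is extracted.

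The one substantive step is this computation for (iii) and (iv): identifying the homology of $A_k^+$ of the dual staircase — equivalently, the reduced Heegaard Floer homology of large negative surgery on an $L$-space knot — and pinning down the length of the resulting truncated tower as exactly $V_{|k|}(K)$. As a consistency check, (iii) can also be recovered from (iv) by a torsion-coefficient count: applying Proposition~\ref{prop:aredproperties}(i) to $\overline{K}$, and using $\Delta_{\overline{K}}=\Delta_K$, $\ared{k}(K)=0$ and $\chi(\mathcal{T}(n))=n$, one gets
\[V_k(\overline{K})=t_k(\overline{K})-\chi(\ared{k}(\overline{K}))=t_k(K)-V_{|k|}(K)=V_k(K)-V_{|k|}(K),\]
which vanishes for all $k\geq 0$. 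Once the dual-staircase bookkeeping is done, everything else is immediate.
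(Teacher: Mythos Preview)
Your treatment of (i) and (ii) is essentially identical to the paper's: both deduce $V_{g-1}>0$, $V_g=0$ from Proposition~\ref{prop:aredproperties}(ii) together with $\ared{k}=0$, then use $V_{g-1}\leq V_g+1$ to get $V_{g-1}=1$ and invoke Proposition~\ref{prop:aredproperties}(iii) for fibredness.

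For (iii) and (iv) the approaches diverge. The paper does not carry out any computation at all; it simply cites Lemma~16 and Proposition~17 of Gainullin's paper \cite{gainullin2014mapping} and moves on. Your proposal instead sketches what such a computation would actually look like---either via the dual staircase model for $CFK^\infty(\overline{K})$ or via the mapping cone for large negative surgery on $K$---and adds the torsion-coefficient consistency check recovering (iii) from (iv). Your route is more self-contained and makes transparent \emph{why} the reduced piece is a single truncated tower of length $V_{|k|}(K)$, at the cost of having to execute the staircase bookkeeping; the paper's route is shorter but black-boxes the content into an external reference. Both are correct, and your outline is in fact a reasonable summary of what the cited results in Gainullin establish.
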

\begin{proof}
As an $L$-space knot satisfies $\ared{k}(K)=0$ for all $k$, it follows from Proposition~\ref{prop:aredproperties}, that $V_{g(K)-1}>0$ and $V_{g(K)}=0$. This shows that $\nu^+(K) = g(K)$. Since $V_{g(K)-1}\leq V_{g(K)}+1$, it follows that $V_{g(K)-1}=1$ and that $K$ is fibred by Proposition~\ref{prop:aredproperties}. The facts about $\overline{K}$ follow from Lemma~16 and Proposition~17 in \cite{gainullin2014mapping}.
\end{proof}

\subsection{The mapping cone formula}
Given a knot in $K \subseteq S^3$, one can determine the Heegaard Floer homology of all manifolds obtained by surgery on it in terms of the knot Floer homology of $K$ via the homology of a mapping cone \cite{ozsvath2011rationalsurgery}. In this section we summarize the results arising from the mapping cone formula that we will need. More detailed accounts of the mapping cone formula and its consequences can be found in \cite{ni2010cosmetic} or \cite{gainullin2014mapping}, for example.

In order to describe the Heegaard Floer homology of $S_{p/q}^3(K)$, we need a way to label its \spinc-structures. This labeling takes the form of an affine bijection defined in terms of relative \spinc-structures on $S^3\setminus \nu K$, \cite{ozsvath2011rationalsurgery}:
\begin{equation}\label{eq:spinccorrespondence}
\phi_{K,p/q} \colon \Zp \longrightarrow \spinc(S_{p/q}^3(K)).
\end{equation}
The exact details of this map are not important, however we note that for any knot $K$, conjugation of \spinc-structures is given by \cite[Lemma~2.2]{ni13halfinteger}:
\begin{equation}\label{eq:conjugation}
\phi_{K,p/q}(q-1-i \bmod{p})=\overline{\phi_{K,p/q}(i)}\in \spinc(S_{p/q}^3(K)).
\end{equation}
If $S_{p/q}^3(K)\cong S_{p/q}^3(K')\cong Y$, then $\phi_{K,p/q}$ and $\phi_{K',p/q}$ will, in general, give rise to different labelings on $\spinc(Y)$. However, as long it will not cause confusion, we will suppress the map $\phi_{K,p/q}$ from the notation.

\paragraph{} When $K$ is the unknot this gives a labeling on the \spinc-structures of a lens space. We will use $d(p,q,i)$ to denote the $d$-invariant
\[d(p,q,i)=d(S_{p/q}^3(U),i) \quad \text{for $i\in \Zp$.}\]

\paragraph{} Now we describe how $CFK^\infty(K)$ determines $\hfp(S_{p/q}^3(K))$. Consider the groups
\[\Ap_i=\bigoplus_{s\in \Z}(s,\Abold_{\lfloor\frac{ps+i}{q}\rfloor}) \quad \text{and} \quad \Bp_i=\bigoplus_{s\in \Z}(s,\Bbold),\]
and the maps
\[\vbold_{\lfloor\frac{ps+i}{q}\rfloor}\colon (s,\Abold_{\lfloor\frac{ps+i}{q}\rfloor}) \rightarrow (s,\Bbold) \quad \text{and} \quad
\hbold_{\lfloor\frac{ps+i}{q}\rfloor}\colon (s,\Abold_{\lfloor\frac{ps+i}{q}\rfloor}) \rightarrow (s+1,\Bbold),\]
where $\vbold_k$ and $\hbold_k$ are the maps on homology induced by $v_k$ and $h_k$ as in the previous section. These maps can be added together to obtain a chain map
\[\Dbold\colon \Ap_i \rightarrow \Bp_i,\]
where
\[
\Dbold(s,x)= (s,\vbold_{\lfloor\frac{ps+i}{q}\rfloor}(x))+ (s+1,\hbold_{\lfloor\frac{ps+i}{q}\rfloor}(x)).
\]
The group $\hfp(S_{p/q}^3(K),i)$ is computed in terms of the mapping cone on $\Dbold$.
\begin{thm}[Ozsv{\'a}th-Szab{\'o}, \cite{ozsvath2011rationalsurgery}]\label{thm:mappingcone}
For any knot $K$ in $S^3$. Let $\Xipq$ be the mapping cone of $\Dbold$, then there is a graded isomorphism of groups
\[H_*(\Xipq) \cong \hfp(S_{p/q}^3(K),i).\]
\end{thm}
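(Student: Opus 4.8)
The plan is to follow the strategy Ozsv\'{a}th and Szab\'{o} used for integer surgeries, assembling $\hfp(S_{p/q}^3(K))$ from a large--surgery computation together with the surgery exact triangle. The base case is the \emph{large surgery formula}: when $p/q\geq 2g(K)-1$ one has $\hfp(S_{p/q}^3(K),i)\cong H_*(A^+_{\lfloor i/q\rfloor})$. I would prove this from a Heegaard diagram: start with a doubly--pointed diagram $(\Sigma,\alpha,\beta,w,z)$ for $(S^3,K)$ in which the distinguished $\beta$--curve is a meridian of $K$, replace that curve by one representing slope $p/q$ (so that it now winds many times through the region between $w$ and $z$), and check that when $p/q$ is large the generators in a fixed \spinc structure $i$ sit in a part of the diagram from which no holomorphic disk contributing to the differential can cross the winding region. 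The complex computing $\hfp$ in that \spinc structure is then exactly the hook complex $A^+_{\lfloor i/q\rfloor}$ with its $U$--action, and the triangle maps involving the new curve are identified with the $\vbold$-- and $\hbold$--type maps. The same base case is visible on the mapping--cone side: for $p/q$ large, $\lfloor(ps+i)/q\rfloor$ has large absolute value whenever $s\neq 0$, so $\vbold_{\lfloor(ps+i)/q\rfloor}$ (for $s>0$) and $\hbold_{\lfloor(ps+i)/q\rfloor}$ (for $s<0$) are isomorphisms onto the tower on those summands; cancelling them collapses $\Xipq$ to $\Abold_{\lfloor i/q\rfloor}=H_*(A^+_{\lfloor i/q\rfloor})$.

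To reach an arbitrary slope I would descend through the surgery exact triangle. For a Farey triple of slopes --- three slopes on $\partial(S^3\setminus\nu K)$ pairwise intersecting once, one of which is $p/q$ --- the corresponding three Dehn fillings fit into an exact sequence, and iterating such triangles reduces the continued--fraction complexity of $p/q$ until one lands in the large--surgery range treated above. On the algebraic side one first passes to a finite model of $\Xipq$: since $\vbold_k$ is an isomorphism onto the tower for $k\geq g(K)$ and $\hbold_k$ is an isomorphism onto the tower for $k\leq -g(K)$, for $b$ large enough the summands with $|s|>b$ can be cancelled without changing the homology. One then exhibits, for each Farey triple, a short exact sequence of these truncated mapping cones --- with the sub and quotient complexes cut out by restricting the range of the index $s$ --- whose connecting map is modelled on the cobordism maps appearing in the proof of the surgery exact sequence. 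Matching this algebraic triangle with the geometric one and applying the five lemma, with the large surgery formula as the base of the induction, gives $H_*(\Xipq)\cong\hfp(S_{p/q}^3(K),i)$ in general. (Alternatively one can realise $S_{p/q}^3(K)$ as surgery on $K$ together with a chain of unknots and deduce the statement from the link surgery formula, at the cost of heavier machinery.)

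It remains to check that the isomorphism respects the absolute $\Q$--grading. I would pin down the grading shifts by specialising to $K=U$, where $S_{p/q}^3(U)$ is a lens space with known $d$--invariants $d(p,q,i)$: requiring the isomorphism to reproduce these fixes the shift carried by each summand $(s,\Bbold)$ and $(s,\Abold_{\lfloor(ps+i)/q\rfloor})$ of $\Xipq$, and since the grading shifts of the cobordism maps entering the exact triangles are independent of $K$, the shifts found for the unknot are forced for every $K$. The step I expect to be hardest is the large surgery formula, and within it the bookkeeping that separates the rational case from the integral one: one must determine exactly which holomorphic curves survive in the wound diagram in order to recover $A^+_{\lfloor i/q\rfloor}$, its $U$--action and the maps $\vbold_k,\hbold_k$, and then control how the floor function $\lfloor(ps+i)/q\rfloor$ distributes residues across the summands $(s,\Abold_{\ast})$ --- it is precisely this $q$--fold spreading, invisible when $q=1$, that produces the extra $\Abold$--summands of $\Xipq$ and that must be matched with the $p$ \spinc structures of $S_{p/q}^3(K)$.
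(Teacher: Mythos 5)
You should first note that the paper contains no proof of this statement at all: Theorem~\ref{thm:mappingcone} is imported from Ozsv{\'a}th--Szab{\'o}, and the only argument the author supplies is the remark that the homology-level cone $\Xipq$ used here computes the same groups as the chain-level cone of the original paper. So the relevant comparison is with the published proof, and your outline takes a genuinely different route from it. Ozsv{\'a}th and Szab{\'o} do not run a winding-region argument and a triangle induction directly on rational slopes; they deduce the rational formula from the already-established \emph{integer} surgery formula by realizing $S_{p/q}^3(K)$ as an integral surgery on the connected sum of $K$ with the core of a solid torus in a lens space, computing $CFK^\infty$ of that connected sum via the K\"unneth formula, and identifying the resulting integer mapping cone with $\Xipq$; the $q$-fold spreading $\lfloor (ps+i)/q\rfloor$ that you rightly single out as the new feature of the rational case is produced there by the Alexander filtration of the lens-space core, not by a separate holomorphic-disk analysis. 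Your plan (rational large-surgery formula proved in a wound diagram, descent through Farey triangles with truncation, gradings pinned by the unknot) is instead modelled on the proof of the integral formula, and your parenthetical link-surgery-formula alternative is a third legitimate route.

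If your route were to be carried out, two steps are genuinely thin rather than routine. First, the surgery exact triangle for a triple of fillings does not respect the splitting into individual \spinc-structures: the triangle only decomposes after summing over suitable orbits of \spinc-structures on the three fillings, and matching that decomposition with the proposed short exact sequences of truncated cones, together with verifying that the connecting maps agree with the cobordism maps (so that the five lemma applies), is precisely the identification of $\vbold_k,\hbold_k$ with two-handle maps that constitutes the heart of the theorem; as written this is asserted, not argued. Second, the statement that for $p/q$ large no holomorphic disk crosses the winding region and that the complex in \spinc-structure $i$ is exactly $A^+_{\lfloor i/q\rfloor}$ is the large surgery theorem itself, again the hard analytic input rather than a checkable bookkeeping point. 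Finally, since the paper states the theorem for the cone of the induced maps on homology $\Dbold$ rather than the chain-level cone, a complete argument along your lines would still need the (easy but necessary) observation recorded in the paper's remark that for knots in $S^3$ the two cones have the same homology.
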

\begin{rem}
The statement of Theorem~\ref{thm:mappingcone} given here is not quite the one given in \cite{ozsvath2011rationalsurgery}. Ozsv{\'a}th-Szab{\'o} establish an isomorphism between Heegaard Floer homology and the mapping cone of a map $D_{i,p/q}^+$, whose induced map on homology is $\Dbold$. For surgeries on $S^3$, both mapping cones compute the same Heegaard Floer homology groups.
\end{rem}
\begin{rem}
The isomorphism in Theorem~\ref{thm:mappingcone} is $U$-equivariant, so it also provides an isomorphism of $\F[U]$-modules.
\end{rem}

When $p/q>0$, the map $\Dbold$ is surjective, so Theorem~\ref{thm:mappingcone} gives a graded isomorphism  $\hfp(S_{p/q}^3(K),i)\cong \ker \Dbold$. The grading on $\ker \Dbold$ is determined by putting a $\Q$-grading on $\Xipq$ in such a way that $\Dbold$ decreases the grading by one and the grading on $\Bp_i$, which is independent of $K$, is fixed to give the correct $d$-invariants for surgery on the unknot (cf. \cite[Section~7.2]{ozsvath2011rationalsurgery}).
In practice, this means that for $p/q>0$ and $0\leq i \leq p-1$, the grading on $\Bp_i$ satisfies \cite{ni2010cosmetic}

\begin{equation}\label{eq:basegrading}
gr(0,1)=d(p,q,i)-1
\end{equation}
and, as
\begin{equation*}
H_{-k}(U)=V_k(U)=
\begin{cases}
0 &\text{if $k\geq 0$}\\
|k| &\text{if $k\leq 0$},
\end{cases}
\end{equation*}
the gradings of $(s,1)$ and $(s+1,1)$ in $\Bp_i$ are related by \cite[Section~3.3]{Ni14characterizing}
\begin{equation}\label{eq:towergradshift}
gr(s+1, 1)=gr(s,1)+2\lfloor\frac{i+ps}{q}\rfloor, \quad \text{for any $s\in\Z$.}
\end{equation}

With these gradings one finds that for any $p/q>0$ and any $0\leq i \leq p-1$, the $d$-invariants $S_{p/q}^3(K)$ can be calculated by \cite[Proposition~1.6]{ni2010cosmetic}
\begin{equation}\label{eq:NiWuformula}
d(S_{p/q}^3(K),i)=d(p,q,i)-2\max\{V_{\lfloor\frac{i}{q}\rfloor},V_{\lceil\frac{p-i}{q}\rceil}\}.
\end{equation}
One can also compute the reduced Heegaard Floer homology groups. We require only the special case when $p/q\geq 2\nu^+(K)-1$. The following proposition can easily be derived from \cite[Corollary~12]{gainullin2014mapping} or \cite[Proposition~3.6]{Ni14characterizing}.
\begin{prop}\label{prop:redHFp}
If $p/q\geq 2\nu^+(K)-1$, then $\hfred(S_{p/q}^3(K),i)\cong \bigoplus_{s\in\Z} \ared{\lfloor\frac{i+ps}{q}\rfloor}$ as a $\Q$-graded groups, where the absolute grading on $\ared{\lfloor\frac{i+ps}{q}\rfloor}$ is determined by the absolute grading on the summand $(s,\Abold)\subset \Xipq$.
\end{prop}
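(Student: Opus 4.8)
The plan is to read this off the mapping cone formula of Theorem~\ref{thm:mappingcone}; the statement is a mild reformulation of \cite[Corollary~12]{gainullin2014mapping} and \cite[Proposition~3.6]{Ni14characterizing}, so I would indicate how to extract it and isolate the point at which the hypothesis $p/q\geq 2\nu^+(K)-1$ is genuinely needed, the remaining effort being the bookkeeping of the absolute grading. First dispose of the case $\nu^+(K)=0$: then $\ared{k}=0$ for all $k$ and both sides vanish (equivalently $S_{p/q}^3(K)$ is an $L$-space by \eqref{eq:NiWuformula}), so assume $\nu^+(K)\geq 1$, whence $p/q\geq 2\nu^+(K)-1\geq 1>0$, the map $\Dbold$ is surjective, and Theorem~\ref{thm:mappingcone} gives a graded isomorphism $\hfp(S_{p/q}^3(K),i)\cong\ker\Dbold$, the grading on $\Bp_i$ being the one fixed by \eqref{eq:basegrading} and \eqref{eq:towergradshift}.

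Next I would split off the towers. Choose for every $k$ a graded splitting $\Abold_k\cong\AT_k\oplus\ared{k}$, set $\Ap_i^T=\bigoplus_{s\in\Z}(s,\AT_{\lfloor\frac{ps+i}{q}\rfloor})$, and let $\mathbb{X}^T\subseteq\Xipq$ be the subgroup spanned by $\Ap_i^T$ and all of $\Bp_i$. Since the mapping cone differential carries $\Ap_i$ into $\Bp_i$ and vanishes on $\Bp_i$, the group $\mathbb{X}^T$ is automatically a subcomplex, on which $\Dbold$ restricts to the map $(s,x)\mapsto(s,U^{V_k}x)+(s+1,U^{H_k}x)$ (with $k=\lfloor\frac{ps+i}{q}\rfloor$), i.e. the ``tower'' mapping cone underlying the Ni--Wu formula. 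The quotient $\Xipq/\mathbb{X}^T$ is $\bigoplus_{s\in\Z}(s,\ared{\lfloor\frac{ps+i}{q}\rfloor})$ with the zero differential, a finite sum since $\ared{k}=0$ for all but finitely many $k$. The associated long exact sequence then reads
\[0\longrightarrow H_*(\mathbb{X}^T)\longrightarrow\hfp(S_{p/q}^3(K),i)\longrightarrow\bigoplus_{s\in\Z}\ared{\lfloor\frac{ps+i}{q}\rfloor}\overset{\delta}\longrightarrow H_{*-1}(\mathbb{X}^T)\longrightarrow\cdots.\]

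There are now two things to verify, and both are where $p/q\geq 2\nu^+(K)-1$ enters. First, $H_*(\mathbb{X}^T)$ is a single tower $\tp$ whose bottom element lies in grading $d:=d(S_{p/q}^3(K),i)$. The bottom grading is forced by \eqref{eq:NiWuformula} (which depends only on the $V_k$), so the content is that $H_*(\mathbb{X}^T)$ carries no extra $\F[U]$-torsion; this can fail when $p/q$ is small, and ruling it out is exactly the truncation analysis of \cite[Corollary~12]{gainullin2014mapping} / \cite[Proposition~3.6]{Ni14characterizing} — under $p/q\geq 2\nu^+(K)-1$ one cancels the acyclic pieces of $\mathbb{X}^T$ at both ends and is left with a ``staircase-like'' finite complex whose homology is a single $\tp$. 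Second, $\delta=0$: the connecting map sends the class of $a\in\ared{\lfloor\frac{ps+i}{q}\rfloor}$ to the class of $\Dbold\tilde a\in\Bp_i$ for any lift $\tilde a\in\Abold_{\lfloor\frac{ps+i}{q}\rfloor}$, so it suffices that $\Dbold|_{\mathbb{X}^T}$ already surjects onto $\Bp_i$, which again comes out of the same truncation. Granting these, the displayed sequence becomes $0\to\tp_{(d)}\to\hfp(S_{p/q}^3(K),i)\to\bigoplus_s\ared{\lfloor\frac{ps+i}{q}\rfloor}\to 0$, and it splits because $\tp$ is an injective $\F[U]$-module; comparing with the decomposition $\hfp=\tp\oplus\hfred$ identifies $\hfred(S_{p/q}^3(K),i)$ with $\bigoplus_{s\in\Z}\ared{\lfloor\frac{ps+i}{q}\rfloor}$ as groups.

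Finally, for the grading: the summand $(s,\Bbold)\subset\Xipq$ carries the grading of \eqref{eq:basegrading}--\eqref{eq:towergradshift}, and $\vbold_{\lfloor\frac{ps+i}{q}\rfloor}$ lowers grading by one, so the induced grading on the summand $(s,\Abold_{\lfloor\frac{ps+i}{q}\rfloor})$ — hence on its quotient $\ared{\lfloor\frac{ps+i}{q}\rfloor}$, which is precisely how the grading on the right-hand side of the statement is defined — matches the one transported across the above identification; one should check that the splitting of the short exact sequence can be taken grading-homogeneous, which is immediate since everything in sight is graded. I expect the main obstacle to be the first verification in the previous paragraph, namely that $p/q\geq 2\nu^+(K)-1$ forces $H_*(\mathbb{X}^T)$ to be a single tower with the correct bottom grading (and $\delta=0$); this is the place where one must either invoke \cite[Corollary~12]{gainullin2014mapping}/\cite[Proposition~3.6]{Ni14characterizing} or carry out the truncation of the mapping cone by hand.
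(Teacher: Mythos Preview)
The paper gives no proof of this proposition: it is stated with the remark that it ``can easily be derived from \cite[Corollary~12]{gainullin2014mapping} or \cite[Proposition~3.6]{Ni14characterizing}.'' Your outline is a correct and faithful account of how that derivation goes --- splitting off the tower summands of $\Ap_i$, checking that the tower mapping cone contributes only a single $\tp$ under the hypothesis, and reading off the $\ared{k}$ pieces from the quotient --- and is essentially the argument in those references.

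One slip: your disposal of the case $\nu^+(K)=0$ is incorrect. It is not true that $\nu^+(K)=0$ implies $\ared{k}=0$ for all $k$; the mirror of any nontrivial $L$-space knot is a counterexample, since Proposition~\ref{prop:Lspaceknot}(iii)--(iv) give $\nu^+(\overline K)=0$ while $\ared{0}(\overline K)\cong\mathcal{T}(V_0(K))\neq 0$. The fix is harmless: the surrounding discussion of the mapping cone already takes $p/q>0$ as a standing assumption (surjectivity of $\Dbold$, the grading formulas \eqref{eq:basegrading}--\eqref{eq:towergradshift}), and with $p/q>0$ your main argument runs uniformly without any case split on $\nu^+$. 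The regime $-1\leq p/q\leq 0$ formally permitted by the stated hypothesis when $\nu^+(K)=0$ is never used in the paper.
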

\begin{rem}
When $q>1$,  the same group $\ared{k}$ can appear as a summand in $\hfred(S_{p/q}^3(K),i)$ for more than one value of $i$. Since the grading on $\Xipq$ depends on $i$, these summands will, in general, possess different gradings.
\end{rem}

The following lemma shows that under certain circumstances we can recover information about the knot Floer homology of two knots with the same surgery. It is a key technical ingredient in the proofs of Theorem~\ref{thm:HFKrecovery} and Theorem~\ref{thm:technical2}.

\begin{lem}\label{lem:HFKrecovery2}
Suppose that $S_{p/q}^3(K)\cong S_{p/q}^3(K')$, for some $p/q>2g(K)-1$, and that $\phi_{K,p/q}=\phi_{K',p/q}$ or $\phi_{K,p/q}=\overline{\phi_{K',p/q}}$. If either
\begin{enumerate}[(i)]
\item $S_{p/q}^3(K)$ is an $L$-space;
\item $q\geq 2$ and, for all $k$, there is $N_k\geq 0$ such that $\ared{k}(K)\cong \mathcal{T}(N_k)$; or
\item $q\geq 3$,
\end{enumerate}
then $V_k(K)=V_k(K')$ and $\ared{k}(K) \cong \ared{k}(K')$ for all $k\geq 0$.
\end{lem}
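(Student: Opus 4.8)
The plan is to exploit the mapping cone formula together with the absolute $\Q$-grading to compare $CFK^\infty(K)$ and $CFK^\infty(K')$ directly. Since $p/q>2g(K)-1\geq 2\nu^+(K)-1$ and (as we will check) $p/q>2g(K')-1$ as well, Proposition~\ref{prop:redHFp} applies to both knots: $\hfred(S_{p/q}^3(K),i)\cong \bigoplus_{s\in\Z}\ared{\lfloor(i+ps)/q\rfloor}(K)$, with the grading on each summand dictated by the grading on the corresponding piece $(s,\Abold)\subset \Xipq$. Under the hypothesis $\phi_{K,p/q}=\phi_{K',p/q}$ or $\phi_{K,p/q}=\overline{\phi_{K',p/q}}$ (using \eqref{eq:conjugation} and the conjugation invariance of Heegaard Floer homology in the latter case), for each $i\in\Zp$ we get a graded isomorphism $\hfred(S_{p/q}^3(K),i)\cong\hfred(S_{p/q}^3(K'),i)$, and similarly $d(S_{p/q}^3(K),i)=d(S_{p/q}^3(K'),i)$.

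First I would extract the $V_k$'s from the $d$-invariants. By \eqref{eq:NiWuformula}, $d(S_{p/q}^3(K),i)-d(p,q,i)=-2\max\{V_{\lfloor i/q\rfloor}(K),V_{\lceil(p-i)/q\rceil}(K)\}$, and since $d(p,q,i)$ depends only on the lens space (hence is the same for $K$ and $K'$), equality of $d$-invariants gives $\max\{V_{\lfloor i/q\rfloor}(K),V_{\lceil(p-i)/q\rceil}(K)\}=\max\{V_{\lfloor i/q\rfloor}(K'),V_{\lceil(p-i)/q\rceil}(K')\}$ for every $i$. Because $p/q>2g(K)-1$, for small $i$ (say $0\le i<q$, giving $\lfloor i/q\rfloor=0$) the index $\lceil(p-i)/q\rceil$ exceeds $g(K)$, so $V_{\lceil(p-i)/q\rceil}(K)=0$; the same index bound combined with the a priori inequality $g(K')\le$ something I must control then forces the right max to reduce to $V_0(K')$, etc. Running $i$ through a suitable range and using the monotonicity $V_k-1\le V_{k+1}\le V_k$, one peels off $V_0=V_0(K')$, then $V_1=V_1(K')$, and inductively $V_k(K)=V_k(K')$ for all $k\ge 0$; in particular $\nu^+(K)=\nu^+(K')$ and $g(K')\le g(K)$ drop out, retroactively justifying that $p/q>2g(K')-1$.

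With the towers matched, I would turn to the reduced parts. For $q\ge 3$ (case (iii)), the point is that within a single \spinc-structure $i$ the summands $\ared{\lfloor(i+ps)/q\rfloor}$ appearing in $\hfred(S_{p/q}^3(K),i)$ are, for $p$ large relative to $g$, ``spread out'': as $s$ varies the relevant index $\lfloor(i+ps)/q\rfloor$ hits each integer $k$ with $|k|\le g-1$ at most $\lceil p/q\rceil$-many... no — the key is the reverse, that each fixed $k$ occurs for a controlled set of $s$, and the grading shift \eqref{eq:towergradshift} separates the different copies and the different $k$'s into distinct grading ranges, so that $\ared{k}(K)$ can be read off as a single graded summand. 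Then graded isomorphism of the $\hfred$'s forces $\ared{k}(K)\cong\ared{k}(K')$ for each $k$. Case (ii) is the same argument but one only needs to match the single invariant $N_k$ (the rank, or equivalently the $\chi$, of $\ared{k}$), and with $q\ge 2$ the two copies $\ared{k}$ and $\ared{-k}$ within $\hfred(-,i)$ are distinguished either by their gradings or by conjugation symmetry $V_k=V_{-k}$ forcing $N_k=N_{-k}$; case (i) is trivial since both sides vanish and only the $d$-invariant computation is needed. The main obstacle I anticipate is the bookkeeping in case (iii): one must verify that the condition $p/q>2g(K)-1$ (rather than some larger lower bound) genuinely suffices to guarantee that no two summands $\ared{k}$, $\ared{k'}$ collide in grading within a single $\hfred(-,i)$, and to handle the overlap between the $\lfloor i/q\rfloor$ and $\lceil(p-i)/q\rceil$ contributions near the symmetric point $i\approx(q-1)/2$; this is where the hypothesis $q\ge 3$ (as opposed to $q\ge 2$) is actually used, presumably because with $q=2$ a given $\ared{k}$ can appear with multiplicity in a way that obstructs recovering it summand-by-summand without the extra input of case (i) or (ii).
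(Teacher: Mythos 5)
Your first step (recovering $V_k(K)=V_k(K')$ from the $d$-invariants via \eqref{eq:NiWuformula} and monotonicity, and disposing of case (i)) matches the paper. But there are two genuine gaps in the rest. First, the assertion that ``$g(K')\le g(K)$ drops out, retroactively justifying $p/q>2g(K')-1$'' is unjustified and in fact begs the question: the $d$-invariant comparison only yields $\nu^+(K')=\nu^+(K)$, and by Proposition~\ref{prop:aredproperties} the genus of $K'$ is governed by the $\ared{k}(K')$ as well, which at this stage could be nonzero for $k$ as large as roughly $p/q$. Consequently Proposition~\ref{prop:redHFp} applied to $K'$ (which only needs $p/q\ge 2\nu^+(K')-1$, so it does apply) gives $\hfred(S^3_{p/q}(K),i)\cong\ared{\lfloor i/q\rfloor}(K')\oplus\bigoplus_{s\ne 0}\ared{\lfloor (i+ps)/q\rfloor}(K')$, and ruling out the $s\ne 0$ summands is precisely the content of the lemma, not something you may assume.

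Second, your mechanism for cases (ii) and (iii) --- that the gradings ``separate the different copies and the different $k$'s into distinct grading ranges'' inside a single $\hfred(S^3_{p/q}(K),i)$ --- is not justified (the grading of a summand $\ared{k}(K')$ with large $k$ depends on the unknown $V$-invariants of $K'$, so no such separation is available), and you flag this yourself as an unresolved obstacle. The paper's argument is different: one takes $m$ maximal with $\ared{m}(K)\not\cong\ared{m}(K')$, uses maximality to force any offending summand to have index $m+pt$ with $s'=tq$, and then compares the grading of this \emph{same} summand across the $q$ consecutive \spinc-structures $mq,mq+1,\dots,mq+q-1$. On the $K$-side these groups are all $\ared{m}(K)$ placed at $s=0$, so their gradings differ exactly by the lens-space shifts $d(p,q,i)-d(p,q,i')$; on the $K'$-side, formula \eqref{eq:Abasegrading} shows a $t\ne 0$ summand shifts by a strictly different amount (because some $1\le r\le q-1$ has $rp\equiv-1\bmod q$), a contradiction. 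This also explains the hypotheses in a way your guess does not: $q\ge 3$ is needed to have three consecutive structures $mq,mq+1,mq+2$ so that one-sided grading inequalities rule out both $t>0$ and $t<0$, while with only $q\ge 2$ one needs the indecomposability of $\ared{m}(K)\cong\mathcal{T}(N_m)$ as an $\F[U]$-module (case (ii)) to force the whole reduced group to be a single $\ared{m+pt}(K')$ and then derive the exact equalities \eqref{eq:gradingmatch}, which fail; it is not an issue of $\ared{k}$ versus $\ared{-k}$ or of multiplicity of appearance.
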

\begin{proof}
Since conjugation induces a grading preserving isomorphism on Heegaard Floer homology, the assumptions on $\phi_{K,p/q}$ and $\phi_{K',p/q}$, imply that we have an isomorphism
\[\hfp(S_{p/q}^3(K),i)\cong\hfp(S_{p/q}^3(K'),i)\]
as $\Q$-graded groups for all $0\leq i \leq p-1$. By comparing the $d$-invariants of these groups and applying \eqref{eq:NiWuformula}, this shows that $V_k(K)=V_k(K')$ for all $0\leq k \leq \lfloor \frac{p+q-1}{2q} \rfloor$. Since $p/q>2\nu^+(K)-1$, it follows that $V_{\lfloor \frac{p+q-1}{2q} \rfloor}(K)=V_{\lfloor \frac{p+q-1}{2q} \rfloor}(K')=0$. This shows that $V_k(K)=V_k(K')$ for all $k\geq 0$ and also that $\nu^+(K)=\nu^+(K')$. If $S_{p/q}^3(K)$ is an $L$-space, then we necessarily have $\ared{k}(K)= \ared{k}(K')=0$ for all $k$. Thus we can only need to establish the proposition under conditions $(ii)$ and $(iii)$. This is done by examining the absolute grading on the reduced part of $\hfp(S_{p/q}^3(K))$.

Since $p/q>2g(K)-1$, Proposition~\ref{prop:redHFp} shows that for any $0\leq i \leq \frac{p+q-1}{2}$ the reduced homology group takes the form
\[\hfred(S_{p/q}^3(K),i)\cong \ared{\lfloor \frac{i}{q} \rfloor}(K).\]
In particular, for $0\leq k<g(K)\leq \lfloor \frac{p+q-1}{2q} \rfloor$ we have
\[\hfred(S_{p/q}^3(K),kq)\cong \dotsb \cong \hfred(S_{p/q}^3(K),kq+q-1).\]
Moreover, by \eqref{eq:basegrading}, these isomorphisms preserve the absolute $\Q$-gradings up to a constant shift. That is, if there is an element of $\hfred(S_{p/q}^3(K),kq)$ with grading $x+d(p,q,kq)$, then for any $0\leq j\leq q-1$, there is an element of $\hfred(S_{p/q}^3(K),kq+j)$ with grading $x+d(p,q,kq+j)$. It is this constant grading shift property which we will use to prove the proposition.

Proposition~\ref{prop:redHFp} shows that for any $0\leq i \leq \frac{p+q-1}{2}$,
\[\hfred(S_{p/q}^3(K),i)\cong \ared{\lfloor \frac{i}{q} \rfloor}(K)\cong \ared{\lfloor \frac{i}{q} \rfloor}(K') \oplus \bigoplus_{s\ne 0} \ared{\lfloor\frac{i+ps}{q}\rfloor}(K').\]
Suppose that we do not have $\ared{k}(K) \cong \ared{k}(K')$ for all $k\geq 0$. Let $m\leq g(K)-1$ be maximal such that $\ared{m}(K) \not\cong \ared{m}(K')$.
This means that there is $s'\ne 0$ such that
\[\ared{\lfloor\frac{mq+q-1+ps'}{q}\rfloor}(K')\ne 0.\]
The maximality of $m$ implies $\ared{m+1}(K) \cong \ared{m+1}(K')$. It follows that we must have
\[\lfloor\frac{mq+q-1+ps'}{q}\rfloor < \lfloor\frac{mq+q+ps'}{q}\rfloor,\]
and hence that $q$ divides $ps'$. As $\gcd(p,q)=1$, this shows that $s'$ takes the form $s=tq$ for some $t\in\Z$.

For any $0\leq i \leq  \frac{p+q-1}{2}$, \eqref{eq:basegrading} and \eqref{eq:towergradshift} show that $(s,1)\in (s,\AT_{\lfloor\frac{ps+i}{q}\rfloor})\subseteq \Xipq$ has grading given by
\begin{equation}\label{eq:Abasegrading}
gr(s,1)=
\begin{cases}
d(p,q,i)- 2V_{\lfloor \frac{i+sp}{q} \rfloor}+ 2\sum_{k=1}^{s-1} \lfloor\frac{i+pk}{q}\rfloor &\text{if $s\geq 1$,}\\
d(p,q,i)- 2V_{\lfloor \frac{i+sp}{q} \rfloor}- 2\sum_{k=-s}^{0} \lfloor\frac{i+pk}{q}\rfloor &\text{if $s\leq 0$.}
\end{cases}
\end{equation}

If $\ared{m}(K)\cong \mathcal{T}(N_m)$, then it cannot be decomposed as a non-trivial direct sum of $\F[U]$-modules. So if $(ii)$ holds, then we must have
\[\hfred(S_{p/q}^3(K),mq)\cong \hfred(S_{p/q}^3(K),mq+1)\cong \ared{m+pt}(K'),\]
for some $t\ne 0$. However \eqref{eq:Abasegrading} shows that if $\ared{m+pt}(K')$ is to be endowed with the correct grading in both $\mathbb{X}^+_{mq,p/q}$ and $\mathbb{X}^+_{mq+1,p/q}$, then
\begin{align}\begin{split}\label{eq:gradingmatch}
\sum_{k=1}^{tq-1} \lfloor\frac{mq+pk}{q}\rfloor = \sum_{k=1}^{tq-1} \lfloor\frac{mq+1+pk}{q}\rfloor
 &\quad\text{if $t>0$ and} \\
\sum_{k=-tq}^{0} \lfloor\frac{mq+pk}{q}\rfloor = \sum_{k=-tq}^{0} \lfloor\frac{mq+1+pk}{q}\rfloor
&\quad\text{if $t<0$}.
\end{split}\end{align}
However, for any $r\in\Z$ such that $rp\equiv -1 \bmod{q}$, we have
\[\lfloor\frac{mq+pr}{q}\rfloor<\lfloor\frac{mq+1+pr}{q}\rfloor.\]
Since we can find such an $r$ in the range $1\leq r \leq q-1$, we see that the equalities in \eqref{eq:gradingmatch} cannot hold if $t\ne 0$. This completes the proof when condition $(ii)$ holds.

If $q\geq 3$, then consider any $t\ne 0$ for which $\ared{m+pt}(K')\ne 0$. By comparing the sums in \eqref{eq:Abasegrading} for different values of $i$, we see that if $t>0$ and $\ared{m+pt}(K')$ contributes a term with grading $x+d(p,q,mq+1)$ to $\hfred(S_{p/q}^3(K),mq+1)$, then the corresponding term it contributes to $\hfred(S_{p/q}^3(K),mq+2)$ has grading strictly greater than $x+d(p,q,mq+2)$. Similarly, if $t<0$ and $\ared{m+pt}(K')$ contributes a term with grading $x+d(p,q,mq+1)$ to $\hfred(S_{p/q}^3(K),mq+1)$, then the term it contributes to $\hfred(S_{p/q}^3(K),mq)$ has grading strictly greater than $x+d(p,q,mq)$. In particular, such an $\ared{m+pt}(K')$ always produces a grading on $\hfred(S_{p/q}^3(K),mq+1)$ which is too small when compared to the gradings on $\hfred(S_{p/q}^3(K),mq)$ and $\hfred(S_{p/q}^3(K),mq+2)$. This completes the proof when $q\geq 3$.
\end{proof}

\subsection{The $d$-invariants of lens spaces.}
In this section, we prove the congruence properties of $d$-invariants that we will require. Ozsv{\'a}th and Szab{\'o} have shown that the $d$-invariants of lens spaces can be calculated recursively
for $0\leq i\leq p-1$ using
\begin{equation}\label{eq:recursive1}
d(p,q,i)= -\frac{1}{4} + \frac{(p+q-1-2i)^2}{4pq} - d(q,r,i'),
\end{equation}
where $q\equiv r \bmod p$ and $i\equiv i' \bmod{q}$, and
\[d(1,0,0)=d(S^3)=0.\]

It will be temporarily convenient to work with a rescaled version of the $d$-invariants. Let $\tilde{d}(p,q,i)=2pd(p,q,i)$. By \eqref{eq:recursive1}, these satisfy
\begin{equation}\label{eq:recursivetilde}
\tilde{d}(p,q,i)=\frac{(p+q-1-2i)^2 -pq - 2p\tilde{d}(q,r,i')}{2q}.
\end{equation}

\begin{lem}\label{lem:2Zcondition}
For all $i,j$ in the range $0\leq i,j\leq p-1$, the quantity $\tilde{d}(p,q,i) -\tilde{d}(p,q,j)$ is an integer satisfying
\begin{equation}\label{eq:mod4cong}
\tilde{d}(p,q,i) -\tilde{d}(p,q,j)\equiv 2(i-j)(p+1) \bmod4
\end{equation}
and
\begin{equation}\label{eq:mod4pcong}
q(\tilde{d}(p,q,i) -\tilde{d}(p,q,j))\equiv 2(pq+q-1-i-j)(j-i) \mod{4p}
\end{equation}
\end{lem}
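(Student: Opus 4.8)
The plan is to prove both congruences by induction on $p$, using the recursion \eqref{eq:recursivetilde}. First I would treat the base case: when $p=1$ the range $0\le i,j\le p-1$ forces $i=j=0$, so both sides vanish and the claim is trivial. For the inductive step I fix $p>1$ and assume the statement holds for all smaller moduli (in particular for modulus $q$ reduced to $q \bmod p =: q'$ with $0 \le q' < p$, since $q' < p$). Writing $i = q\lfloor i/q\rfloor + i'$ and $j = q\lfloor j/q\rfloor + j'$ with $0\le i',j' \le q-1$, the recursion gives
\[
\tilde d(p,q,i) - \tilde d(p,q,j) = \frac{(p+q-1-2i)^2 - (p+q-1-2j)^2}{2q} - \frac{2p\bigl(\tilde d(q,r,i') - \tilde d(q,r,j')\bigr)}{2q},
\]
where $r \equiv q \bmod p$. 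The first term factors as $\frac{(2j-2i)(2p+2q-2-2i-2j)}{2q} = \frac{2(j-i)(p+q-1-i-j)}{q}$; I would need to check this is an integer, which follows because $i-j \equiv i'-j' \bmod q$ and one can rewrite the numerator so the $q$-divisibility is visible, while the second term is an integer times $p/q$, hence combined with integrality of the left side (which one gets once the first term is handled) it is forced to be an integer too. Integrality of $\tilde d(p,q,i)-\tilde d(p,q,j)$ itself then follows, with the inductive hypothesis supplying that $\tilde d(q,r,i') - \tilde d(q,r,j') \in \Z$.

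For the mod-$4$ congruence \eqref{eq:mod4cong}: I would reduce the displayed expression modulo $4$. The term $-\frac{2p(\tilde d(q,r,i')-\tilde d(q,r,j'))}{2q} = -\frac{p}{q}(\tilde d(q,r,i')-\tilde d(q,r,j'))$; using the inductive hypothesis $\tilde d(q,r,i')-\tilde d(q,r,j') \equiv 2(i'-j')(q+1)\bmod 4$ and tracking the factor $p/q$ (note $\gcd(p,q)=1$, so $q$ is invertible mod $4$ and mod $4p$ — one works with the integer $q(\tilde d(p,q,i)-\tilde d(p,q,j))$ when denominators are inconvenient, exactly as \eqref{eq:mod4pcong} is phrased) should produce the claimed residue after combining with the contribution $\frac{2(j-i)(p+q-1-i-j)}{q} \bmod 4$. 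The bookkeeping here is the routine-but-delicate part: one must carefully reduce $i \equiv i' \bmod q$, handle the parity of various factors, and use $q \equiv r \bmod p$ when invoking the inductive statement for modulus $q$.

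For \eqref{eq:mod4pcong} I would multiply the recursion identity through by $q$ to clear the denominator:
\[
q\bigl(\tilde d(p,q,i) - \tilde d(p,q,j)\bigr) = 2(j-i)(p+q-1-i-j) - p\bigl(\tilde d(q,r,i') - \tilde d(q,r,j')\bigr),
\]
and reduce modulo $4p$. The first term $2(j-i)(p+q-1-i-j)$ already matches the target $2(pq+q-1-i-j)(j-i)$ modulo $4p$ once one checks $2(j-i)\bigl((p+q-1-i-j)-(pq+q-1-i-j)\bigr) = 2(j-i)p(1-q) \equiv 0 \bmod 4p$, i.e. $2(j-i)(1-q) \equiv 0 \bmod 4$, which holds because $(j-i)(1-q)$ is even (if $q$ is odd this is clear; if $q$ is even then $p$ is odd and one argues via \eqref{eq:mod4cong} that $j-i$ has the right parity, or more simply notes $(1-q)$ is odd so one needs $j-i$ even — here I would instead feed in the mod-$4$ result already proven, or prove the two congruences simultaneously by a joint induction). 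The term $-p(\tilde d(q,r,i')-\tilde d(q,r,j'))$ must then be shown $\equiv 0 \bmod 4p$, i.e. $\tilde d(q,r,i') - \tilde d(q,r,j') \equiv 0 \bmod 4$; but by the inductive hypothesis applied to modulus $q$, $\tilde d(q,r,i')-\tilde d(q,r,j') \equiv 2(i'-j')(q+1) \bmod 4$, so I need $(i'-j')(q+1)$ even, which again is an elementary parity check (it fails to be automatically even only when $i'-j'$ is odd and $q$ is even, and in that regime $q+1$ is odd, so — this is precisely the case needing care). This suggests the cleanest route is to carry out a single induction proving \eqref{eq:mod4cong} and \eqref{eq:mod4pcong} together, so that the parity information from the former is available when reducing the latter.

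The main obstacle I anticipate is not conceptual but organizational: keeping the reductions $i\equiv i'\bmod q$, $q\equiv r\bmod p$ straight while simultaneously tracking congruences to two different moduli ($4$ and $4p$), and correctly handling the even/odd case distinctions on $p$ and $q$ that arise because $\gcd(p,q)=1$ forces them to have opposite parity when both would otherwise be troublesome. Setting up the joint induction and being disciplined about which hypothesis is invoked at each step is what makes the argument go through cleanly.
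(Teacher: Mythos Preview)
Your approach is essentially the paper's: both argue by induction on $p$, multiply through by $q$ to obtain
\[
q\bigl(\tilde d(p,q,i)-\tilde d(p,q,j)\bigr)=2(p+q-1-i-j)(j-i)+p\bigl(\tilde d(q,r,j')-\tilde d(q,r,i')\bigr),
\]
and then feed in the inductive hypotheses for modulus $q$. Two organizational points are worth noting.

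First, your decomposition for \eqref{eq:mod4pcong} --- match the first term to the target mod $4p$, then show the $p(\cdots)$ term vanishes mod $4p$ --- does not work termwise when $q$ is even and $i-j$ is odd, as you yourself flag. The paper handles this by \emph{not} separating the two pieces: it case-splits on the parity of $q$, uses the mod-$4$ inductive hypothesis to compute exactly what $p(\tilde d(q,r,j')-\tilde d(q,r,i'))$ contributes mod $4p$ (namely $0$ if $q$ is odd, and $2p(j-i)$ if $q$ is even, since then $i-j\equiv i'-j'\bmod 2$), and then adds this to the first term. The two ``errors'' you identify cancel rather than vanish individually.

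Second, for \eqref{eq:mod4cong} the paper avoids the awkwardness of inverting $q$ mod $4$ by reducing the displayed identity modulo $4q$ instead of modulo $4$. Writing $i=i'+\alpha q$, $j=j'+\beta q$ and using both inductive hypotheses for modulus $q$, one computes the right-hand side mod $4q$ and finds it is a multiple of $q$; dividing through then yields the mod-$4$ statement and, as a byproduct, the integrality of $\tilde d(p,q,i)-\tilde d(p,q,j)$. This is cleaner than tracking $p/q$ directly. Your instinct that the two congruences must be proven jointly is correct; the paper does exactly this, proving \eqref{eq:mod4pcong} first (using the mod-$4$ hypothesis at level $q$) and then \eqref{eq:mod4cong} (using both hypotheses at level $q$).
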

\begin{proof}
We prove both \eqref{eq:mod4cong} and \eqref{eq:mod4pcong} by induction on $p$. As $\tilde{d}(1,0,0)=0$, the required identities are clearly true for $p=1$. The inductive step is carried out by performing some elementary but slightly masochistic modular arithmetic.

From \eqref{eq:recursivetilde}, we have
\begin{equation}\label{eq:congproof}
q(\tilde{d}(p,q,i) -\tilde{d}(p,q,j)) = 2(p+q-1-(i+j))(j-i) +p(\tilde{d}(q,r,j')-\tilde{d}(q,r,i')),
\end{equation}
where $0\leq i',j',r\leq q-1$ are congruent modulo $q$ to $i,j$ and $p$ respectively. By the inductive hypothesis we know that
\begin{equation}\label{eq:congproof2}
\tilde{d}(q,r,j')-\tilde{d}(q,r,i')\equiv 2(j'-i')(q+1)\bmod{4}
\end{equation}
and
\begin{equation}\label{eq:congproof3}
r(\tilde{d}(q,r,j')-\tilde{d}(q,r,i'))\equiv 2(qr+r-1-i'-j')(i'-j') \mod{4q}.
\end{equation}
\paragraph{}We first prove \eqref{eq:mod4pcong} by reducing \eqref{eq:congproof} modulo $4p$. If $q$ is odd, then \eqref{eq:congproof2} shows that $\tilde{d}(q,r,j')-\tilde{d}(q,r,i')\equiv 0 \bmod 4$. Therefore, \eqref{eq:congproof} gives
\[q(\tilde{d}(p,q,i) -\tilde{d}(p,q,j))\equiv 2(p+q-1-i-j)(j-i) \mod 4p,\]
as required. If $q$ is even, then $i-j\equiv i'- j' \bmod 2$. So \eqref{eq:congproof2} shows that
\[p(\tilde{d}(q,r,j')-\tilde{d}(q,r,i'))\equiv 2(j-i)p \bmod 4p.\]
So when $q$ is even, \eqref{eq:congproof} gives
\[q(\tilde{d}(p,q,i) -\tilde{d}(p,q,j))\equiv 2(q-1-i-j)(j-i) \bmod 4p,\]
as required.

\paragraph{}To prove \eqref{eq:mod4cong}, we consider the result of reducing \eqref{eq:congproof} modulo $4q$. If we write $i=i'+\alpha q$ and $j=j' + \beta q$, then one can check that
\begin{equation}\label{eq:congproof4}
2(p+q-1-i-j)(j-i)\equiv 2(p+q-1-i'-j')(j'-i') + 2q(\alpha + \beta)(p+1) \bmod{4q}.
\end{equation}
On the other hand, by using \eqref{eq:congproof2} and \eqref{eq:congproof3}, we find that
\begin{align}\begin{split}\label{eq:congproof5}
p(\tilde{d}(q,r,j')-\tilde{d}(q,r,i'))&\equiv 2(p-r)(i'-j')(q+1) + 2(qr + r-1-i'-j')(i'-j') \bmod{4q}\\
&\equiv 2(i'-j')(pq+p-1-i'-j')\bmod{4q}.
\end{split}\end{align}
By summing \eqref{eq:congproof4} and \eqref{eq:congproof5}, we obtain
\begin{align}\begin{split}\label{eq:congproof6}
q(\tilde{d}(p,q,i) -\tilde{d}(p,q,j)) &\equiv 2q(p+1)(i'-j') + 2q(\alpha + \beta)(p+1) \bmod{4q}\\
&\equiv 2q(p+1)(i'+j'+\alpha +\beta) \bmod{4q}.
\end{split}\end{align}
Since the right hand side of \eqref{eq:congproof6} is divisible by $q$, it follows that $\tilde{d}(p,q,i) -\tilde{d}(p,q,j)$ is an integer. If $p$ is odd, then $p+1$ is even, so \eqref{eq:congproof6} shows that
\[q(\tilde{d}(p,q,i) -\tilde{d}(p,q,j))\equiv 0 \bmod{4q}.\]
Thus if $p$ is odd, then $\tilde{d}(p,q,i) -\tilde{d}(p,q,j)\equiv 0 \bmod{4}$, as required. If $p$ is even, then $q$ is necessarily odd. In this case we have $i + j \equiv i'+j'+\alpha+\beta\bmod{2}$. So \eqref{eq:congproof6} also implies \eqref{eq:mod4cong} when $q$ is odd. This completes the proof.
\end{proof}

This allows us to prove the congruence result for $d$-invariants we require.
\begin{cor}\label{cor:integralconditions}
We have
\[d(p,q,i) -d(p,q,j)\in 2\Z \Leftrightarrow
\begin{cases}
(q-1-i-j)(j-i) \equiv 0 \bmod{p} &\text{ if $p$ is odd}\\
(q-1-i-j)(j-i) \equiv 0 \bmod{2p} &\text{if $p$ is even}.
\end{cases}
\]
\end{cor}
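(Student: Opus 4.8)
The plan is to pass to the rescaled invariants $\tilde{d}(p,q,i)=2p\,d(p,q,i)$ and read everything off Lemma~\ref{lem:2Zcondition}. Set $D=\tilde{d}(p,q,i)-\tilde{d}(p,q,j)$, which is an integer by that lemma. Since $d(p,q,i)-d(p,q,j)=D/2p$, the statement $d(p,q,i)-d(p,q,j)\in 2\Z$ is literally the divisibility $4p\mid D$. So the whole corollary amounts to deciding, from the two congruences \eqref{eq:mod4cong} and \eqref{eq:mod4pcong}, exactly when $4p\mid D$, and this is what I would do case by case on the parity of $p$.

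First suppose $p$ is odd. Then $\gcd(4,p)=1$, so $4p\mid D$ if and only if $4\mid D$ and $p\mid D$. Because $p+1$ is even, the right-hand side of \eqref{eq:mod4cong} is divisible by $4$, so $4\mid D$ holds automatically. Reducing \eqref{eq:mod4pcong} modulo $p$ gives $qD\equiv 2(q-1-i-j)(j-i)\bmod p$; since $q$ and $2$ are both units mod $p$, this yields $p\mid D$ if and only if $(q-1-i-j)(j-i)\equiv 0\bmod p$, which is precisely the claimed condition for odd $p$.

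Now suppose $p$ is even, so $q$ is odd (as $\gcd(p,q)=1$) and hence $\gcd(q,4p)=1$. Then $4p\mid D$ if and only if $4p\mid qD$, and by \eqref{eq:mod4pcong} this is equivalent to $2(pq+q-1-i-j)(j-i)\equiv 0\bmod 4p$, i.e. to $(pq+q-1-i-j)(j-i)\equiv 0\bmod 2p$. The remaining point is to check that this is equivalent to $(q-1-i-j)(j-i)\equiv 0\bmod 2p$: the two products differ by $pq(j-i)$, which lies in $2p\Z$ exactly when $j-i$ is even, while (using that $p$ is even and $q$ is odd) each of the two products is congruent to $j-i$ modulo $2$, so both fail to be divisible by $2p$ when $j-i$ is odd and differ by an element of $2p\Z$ when $j-i$ is even. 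In either case the two divisibility conditions coincide, which gives the stated congruence for even $p$. I expect this last parity bookkeeping — verifying that replacing $pq+q-1-i-j$ by $q-1-i-j$ leaves the congruence mod $2p$ unchanged — to be the only mildly delicate step; the rest is a direct application of Lemma~\ref{lem:2Zcondition} together with the coprimality of $q$ (and, when $p$ is odd, of $2$) to the relevant modulus.
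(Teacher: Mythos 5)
Your proof is correct and follows essentially the same route as the paper: rescale to $\tilde d=2pd$, note the statement becomes $4p\mid \tilde d(p,q,i)-\tilde d(p,q,j)$, and split on the parity of $p$ using \eqref{eq:mod4cong} and \eqref{eq:mod4pcong} with the invertibility of $q$ (and of $2$ when $p$ is odd). The only difference is that you spell out the parity bookkeeping justifying the replacement of $pq+q-1-i-j$ by $q-1-i-j$ modulo $2p$ in the even case, a step the paper's proof passes over silently.
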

\begin{proof}
We prove this by showing that
\[\tilde{d}(p,q,i) -\tilde{d}(p,q,j)\in 2p\Z \Leftrightarrow
\begin{cases}
(q-1-i-j)(j-i) \equiv 0 \bmod{p} &\text{ if $p$ is odd}\\
(q-1-i-j)(j-i) \equiv 0 \bmod{2p} &\text{if $p$ is even}.
\end{cases}\]

\paragraph{}If $p$ is odd, then Lemma~\ref{lem:2Zcondition} shows
\[\tilde{d}(p,q,i) -\tilde{d}(p,q,j) \equiv 0 \bmod{4}.\]
Consequently we see that
$\tilde{d}(p,q,i) -\tilde{d}(p,q,j)\equiv 0 \bmod{4p}$ if and only if
$\tilde{d}(p,q,i) -\tilde{d}(p,q,j)\equiv 0 \bmod{p}$. From \eqref{eq:mod4pcong}, we see that
\[q(\tilde{d}(p,q,i) -\tilde{d}(p,q,j))\equiv 2(q-1-i-j)(j-i) \bmod{p}.\]
Since $q$ is coprime to $p$, this is congruent to 0 if and only if
\[(q-1-i-j)(j-i) \equiv 0 \bmod{p},\]
as required.

\paragraph{}If $p$ is even, then $q$ is necessarily odd. As an odd $q$ is invertible modulo $4p$, it follows from \eqref{eq:mod4pcong} that $\tilde{d}(p,q,i) -\tilde{d}(p,q,j)\equiv 0 \bmod{4p}$ if and only if $2(q-1-i-j)(j-i)\equiv 0 \bmod 4p$. Equivalently, if and only if
\[(q-1-i-j)(j-i)\equiv 0 \bmod 2p,\] as required.
\end{proof}

One other result we will require is an a bound on the absolute value of the $d$-invariants of lens spaces.
\begin{lem}\label{lem:overalldbound}
For any $1\leq q \leq p-1$ and any $0\leq i\leq p-1$, we have
\[|d(p,q,i)|\leq \frac{p-1}{4}.\]
\end{lem}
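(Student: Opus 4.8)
The plan is to run an induction on $p$ using the recursion \eqref{eq:recursive1}, tracking the bound $|d(p,q,i)|\le \tfrac{p-1}{4}$ as the inductive hypothesis for all smaller values of the first argument. The base case $p=1$ is $d(1,0,0)=0\le 0$, which is immediate. For the inductive step we rewrite \eqref{eq:recursive1} as
\[
d(p,q,i) + \tfrac14 - \tfrac{(p+q-1-2i)^2}{4pq} = -\,d(q,r,i'),
\]
where $0\le r\le q-1$ with $r\equiv q\bmod p$ (note $q$ itself need not be less than $p$, but $r$ is, so we must be slightly careful: if $r=0$ then $d(q,r,i')=d(\gcd\text{-type term})$; in fact when $1\le q\le p-1$ we simply have $r=q$, so the recursion lands on $d(q,q',i')$ with $1\le q'\le q-1$ and $q<p$, to which the inductive hypothesis applies), and $i'\equiv i\bmod q$ with $0\le i'\le q-1$. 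The inductive hypothesis gives $|d(q,r,i')|\le \tfrac{q-1}{4}$ (or, in the degenerate case $q=1$, $d$ equals $0$), so it suffices to show
\[
-\tfrac14 - \tfrac{q-1}{4} \;\le\; d(p,q,i) - \tfrac{(p+q-1-2i)^2}{4pq} + \tfrac14 \;\le\; \tfrac{q-1}{4},
\]
i.e. to bound the \emph{parabolic term} $f(i):=\tfrac{(p+q-1-2i)^2}{4pq}$ from above and below over $0\le i\le p-1$ and combine.

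The key elementary estimate is on $f(i)$: it is a downward-shifted parabola in $i$ with vertex at $i=\tfrac{p+q-1}{2}$, so on the integer range $0\le i\le p-1$ its minimum is attained near the vertex and its maximum at one of the endpoints. At the endpoints, $|p+q-1-2i|$ is at most $\max\{p+q-1,\,q-p+1\}\le p+q-1$ (using $q\le p-1$ this is $\le 2p-2$), giving $f(i)\le \tfrac{(p+q-1)^2}{4pq}$; one then checks $\tfrac{(p+q-1)^2}{4pq}\le \tfrac{p+q-1}{2q}$ is false in general, so instead one bounds $\tfrac{(p+q-1)^2}{4pq}$ against $\tfrac{p}{4}+\text{(small)}$ directly — concretely $(p+q-1)^2 \le (p+q)^2 \le p q \cdot \tfrac{(p+q)^2}{pq}$, and $\tfrac{(p+q)^2}{4pq} = \tfrac14\big(\tfrac pq + 2 + \tfrac qp\big)$, which for $1\le q\le p-1$ is maximized at $q=1$ giving $\tfrac14(p+2+\tfrac1p)$. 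Meanwhile $f(i)\ge 0$ trivially. Feeding $0\le f(i)\le \tfrac14(p+2+\tfrac1p)$ and $|d(q,r,i')|\le \tfrac{q-1}{4}\le\tfrac{p-2}{4}$ into the recursion yields $|d(p,q,i)|\le \tfrac14 + \tfrac{p-2}{4} + \tfrac14(p+2+\tfrac1p)$, which is too weak — so the crude endpoint bound must be sharpened.

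The fix, and the step I expect to be the main obstacle, is to exploit the \emph{interaction} between the residue $i'$ and the parabola rather than bounding the two contributions independently. Since $i\equiv i'\bmod q$, writing $i = i' + \alpha q$ with $0\le\alpha\le \lfloor (p-1)/q\rfloor$, the quantity $p+q-1-2i = (p-1-2\alpha q) + (q - 2i')$ couples the two, and the induction should really be set up to prove a sharper statement in which the parabolic term and the recursive $d$-invariant partially cancel — e.g. proving simultaneously the two-sided bounds $0 \le d(p,q,i) + \tfrac{(2i-p-q+1)^2}{4pq}$ type normalizations, or equivalently proving $-\tfrac{p-1}{4}\le d(p,q,i)$ and $d(p,q,i)\le \tfrac{p-1}{4}$ as genuinely separate inductions, in each of which only \emph{one} side of the hypothesis on $d(q,r,i')$ is used together with the matching one-sided bound on $f$ at the relevant range of $i$. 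I would first carve the range $0\le i\le p-1$ into the sub-ranges where $f(i)$ is small (near the vertex, where the $-d(q,r,i')$ term dominates and the hypothesis $|d(q,r,i')|\le\tfrac{q-1}{4}<\tfrac{p-1}{4}$ already suffices) and where $f(i)$ is large (near the endpoints $i\approx 0$ or $i\approx p-1$, where one checks that $q-2i'$ is forced to be of controlled sign/size so that $(p+q-1-2i)^2\le (p-q+1)^2$ or similar, reclaiming the factor that makes $\tfrac{(p-q+1)^2}{4pq}+\tfrac14+\tfrac{q-1}{4}\le\tfrac{p-1}{4}$). Verifying that these two regimes genuinely overlap to cover all of $0\le i\le p-1$, and that the arithmetic in each regime closes the induction with the stated constant $\tfrac{p-1}{4}$ rather than something larger, is the delicate bookkeeping; it is "elementary but slightly masochistic" in exactly the spirit of Lemma~\ref{lem:2Zcondition}'s proof, and I would present it as a short case analysis on the location of $i$ relative to $q$ and to $p-q$.
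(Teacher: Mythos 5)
There is a genuine gap: what you have written is a plan for an induction whose decisive step you never carry out. You correctly diagnose that feeding the endpoint bound on the parabolic term and the inductive bound $|d(q,r,i')|\le\frac{q-1}{4}$ \emph{independently} into \eqref{eq:recursive1} gives something of size roughly $\frac{p}{2}$, much weaker than $\frac{p-1}{4}$, and that one must exploit the coupling between $i$ and its residue $i'$; but the ``fix'' is left as an unexecuted case analysis. The difficulty is real, not just bookkeeping: the bound is attained for $L(p,1)$ and $L(p,p-1)$, so there is no slack anywhere. Concretely, in your endpoint regime the independent bounds would require $(p+q-1)^2\le pq(p-q+1)$, which already fails for $p=3$, $q=2$ (there $d(3,2,0)=\tfrac16\le\tfrac12$ only because $d(2,1,0)=\tfrac14$ has the favourable sign at the \emph{forced} residue $i'=0$); and in the near-vertex regime the crude lower bound $d(p,q,i)\ge-\tfrac14-\tfrac{q-1}{4}$ is insufficient whenever $q\ge p-3$. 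So neither regime closes as described, and the induction is not complete. (A side remark: the reduction in \eqref{eq:recursive1} replaces $(p,q)$ by $(q,\,p\bmod q)$, i.e.\ $r\equiv p\bmod q$; your parenthetical about $r=q$ garbles this, though it is not the main problem.)

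For comparison, the paper's proof sidesteps the recursion entirely and is much shorter. First, since $d(-Y,\mathfrak s)=-d(Y,\mathfrak s)$, one has $d(p,q,i)=-d(p,p-q,j)$ for some $j$, so only the one-sided bound $d(p,q,i)\ge\frac{1-p}{4}$ needs proving. Second, that lower bound is immediate from the Ozsv\'ath--Szab\'o plumbing formula $4d(p,q,i)=v^TM^{-1}v-l$, where $M$ is the positive-definite tridiagonal matrix attached to a continued fraction expansion of $p/q$ with all coefficients at least $2$: positive definiteness gives $v^TM^{-1}v\ge0$, and such an expansion has length $l\le p-1$, whence $4d(p,q,i)\ge-l\ge1-p$. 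If you want to rescue your inductive route, you should at least import the first observation (prove only the lower bound, where the parabolic term has the helpful sign), but even then the cases $q\in\{p-3,p-2,p-1\}$ force you to use the parabola quantitatively at the specific residues, so the deferred analysis is genuine work rather than routine verification.
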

\begin{proof}
Since the $d$-invariants of a rational homology sphere satisfy $d(-Y,\spincs)=-d(Y,\spincs)$ for any $\spincs\in\spinc(Y)$, we see that for any $1\leq i\leq p-1$, there is $0\leq j\leq p-1$ such that $d(p,q,i)=-d(p,p-q,j)$. Therefore, to prove the lemma, it is sufficient to show that $d(p,q,i)\geq\frac{1-p}{4}$.
Since $d(1,0,0)=0$, we can assume $p>1$. Suppose that $p/q>1$ has a continued fraction expansion
\[p/q= a_1 - \cfrac{1}{a_2
           - \cfrac{1}{\ddots
           - \cfrac{1}{a_l} } },\]
where $a_i\geq 2$ for all $i$. This expansion has length $l\leq p-1$. If $M$ is the matrix
\[M =
  \begin{pmatrix}
   a_1  & -1   &        &       \\
   -1   & a_2  & -1     &       \\
        & -1   & \ddots & -1    \\
        &      & -1     & a_l
  \end{pmatrix},\]
then Ozsv{\'a}th and Szab{\'o} have shown that for any $i$, there is $v\in \Z^l$ such that \cite{Ozsvath03plumbed,ozsvath2005heegaard}
\[4d(p,q,i)= v^T M^{-1} v - l.\]
Since $M$ and, hence $M^{-1}$, is positive definite, this shows that
\[4d(p,q,i)\geq l \geq 1-p.\]
This gives the desired lower bound.
\end{proof}

\section{Proving Theorems~\ref{thm:HFKrecovery} and~\ref{thm:technical2}}
Let $Y$ be a 3-manifold such that $Y\cong S_{p/q}^3(K') \cong S_{p/q}^3(K)$ for knots $K$ and $K'$ in $S^3$ and some $p/q>0$. By \eqref{eq:NiWuformula}, these two surgery descriptions of $Y$ give labelings
\begin{equation*}
\phi_{K,p/q},\phi_{K',p/q} \colon \Zp \longrightarrow \spinc(Y),
\end{equation*}
such that the following diagram commutes
 \[\xymatrix{
 {\rm Spin}^c(Y) \ar[rd]^d  & \mathbb{Z}/p\mathbb{Z} \ar[l]_{\phi_{K,p/q}} \ar[d]^{D} \\
 \mathbb{Z}/p\mathbb{Z} \ar[u]^{\phi_{K',p/q}}\ar[r]^{D'} & \mathbb{Q},
}\]
where
\[
D(i)=d(p,q,i)-2\max\{V_{\lfloor\frac{i}{q}\rfloor}(K),V_{\lceil\frac{p-i}{q}\rceil}(K)\}
\]
and
\[
D'(i)=d(p,q,i)-2\max\{V_{\lfloor\frac{i}{q}\rfloor}(K'),V_{\lceil\frac{p-i}{q}\rceil}(K')\}
\]
for $0\leq i \leq p-1$.

Thus if we let $\phi$ denote map
\[\phi :=\phi_{K,p/q}^{-1}\circ \phi_{K',p/q}\colon \Zp \rightarrow \Zp\]
and $f(i)=\min\{\lfloor \frac{i}{q} \rfloor,\lceil \frac{p-i}{q} \rceil\}$,
then for $0\leq i \leq p-1$, we have
\begin{equation}\label{eq:Vrelation}
2V_{f(i)}(K)-2V_{f(\phi(i))}(K')=d(p,q,i)-d(p,q,\phi(i)).
\end{equation}
\begin{rem}
There are two important consequences of \eqref{eq:Vrelation}. Firstly, it shows
\[d(p,q,i)-d(p,q,\phi(i))\in 2\Z.\]
Secondly, $d(p,q,i)-d(p,q,\phi(i))>0$ implies that $V_{f(i)}(K)>0$ and hence that $\nu^+(K)\geq f(i)+1$.
\end{rem}
There are three possible forms for $\phi$.
\begin{prop}\label{prop:mapform}
The map $\phi \colon \Zp \rightarrow \Zp$ takes one of the  following forms:
\begin{enumerate}[I:]
\item $\phi(i)=a(i-s)+s \bmod p$, where $p$ is odd, $a^2 \equiv 1 \bmod p$ and $s\in \{\frac{q-1}{2},\frac{p+q-1}{2}\}\cap \Z$.
\item $\phi(i)=a(i-s)+s \bmod p$, where $p$ is even, $a^2\equiv 1 \bmod 2p$ and $s=\frac{q-1}{2}$; or
\item $\phi(i)=a(i-s)+s+\frac{p}{2} \bmod p$, where $p\equiv 0 \bmod 8$, $a^2\equiv p+1 \bmod 2p$ and $s=\frac{q-1}{2}$.
\end{enumerate}
\end{prop}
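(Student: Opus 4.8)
The plan is to extract the structure of $\phi$ purely from the two properties recorded in the remark following \eqref{eq:Vrelation}: that $d(p,q,i)-d(p,q,\phi(i))\in 2\Z$ for all $i$, together with the fact that $\phi$ is a bijection of $\Zp$ coming from comparing two \spinc-labelings of the same manifold, and that $\phi$ intertwines the two conjugation symmetries. By \eqref{eq:conjugation}, conjugation of \spinc-structures corresponds in the $i$-coordinate to $i\mapsto q-1-i \bmod p$ for both $K$ and $K'$. Composing $\phi$ with these two involutions, $\phi$ must commute with the involution $\iota(i)=q-1-i\bmod p$ in the sense that $\phi(q-1-i)=q-1-\phi(i)$; equivalently, $\phi$ fixes (setwise) the set of "symmetric" labels and, after conjugating the center of symmetry to the origin, $\phi$ becomes an involution-equivariant bijection. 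This already forces $\phi$ to be affine: the first step is to show, using Corollary~\ref{cor:integralconditions}, that the condition $d(p,q,i)-d(p,q,\phi(i))\in 2\Z$ for all $i$ is rigid enough that $\phi$ must be a linear automorphism of $\Zp$ up to translation.

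Concretely, I would first translate coordinates. Writing $j=i-s$ where $s$ is the appropriate half-integer fixed point of $\iota$ (namely $s=\tfrac{q-1}{2}$ or $s=\tfrac{p+q-1}{2}$ when these lie in $\Z$), the conjugation symmetry becomes $j\mapsto -j$, and $\phi$ in the new coordinates is an odd map (or odd up to a shift by $p/2$, which is the only other fixed "half-period" of the involution on $\Zp$ when $p$ is even — this is where the three cases of the proposition originate). Then I would feed the $2\Z$-divisibility condition from Corollary~\ref{cor:integralconditions} into this: $d(p,q,i)-d(p,q,\phi(i))\in 2\Z$ becomes a statement that $(q-1-i-\phi(i))(\phi(i)-i)\equiv 0$ modulo $p$ (or $2p$ when $p$ is even), i.e. $(j+\phi'(j))\cdot(\text{something})\equiv 0$ in the shifted coordinate. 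Applying this for a generator $j$ and using that $\phi'$ is an odd bijection pins down $\phi'(j)=aj$ for a fixed unit $a$, and then plugging back in forces the multiplicative condition $a^2\equiv 1 \bmod p$ (resp. $\bmod 2p$, resp. $a^2\equiv p+1\bmod 2p$ in the shifted case). The parity bookkeeping — which translation $s$ is an integer, when $p/2$ is available as a shift, and why $p\equiv 0\bmod 8$ is forced in Case III — is the part requiring care: it comes from demanding that $s$ or $s+p/2$ actually be an integer and that the resulting $\phi$ be a genuine bijection respecting \eqref{eq:conjugation}.

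The main obstacle I anticipate is case III, and more generally the even-$p$ bookkeeping. Showing that $\phi$ is affine is the conceptual heart, but ruling out spurious possibilities — and in particular establishing that when $\phi$ involves the extra shift by $p/2$ one must have $8\mid p$ and $a^2\equiv p+1\bmod 2p$ rather than $a^2\equiv 1$ — will require carefully tracking the mod-$4$ refinement from \eqref{eq:mod4cong} in Lemma~\ref{lem:2Zcondition}, not just the mod-$p$ statement of Corollary~\ref{cor:integralconditions}. I expect the argument to split according to the parity of $p$ and then, within the even case, according to whether the translation needed to center the conjugation symmetry is by an integer or by a half-integer, with the mod-$4$ congruence being exactly what detects the $8\mid p$ constraint. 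Once $\phi$ is known to be affine with the stated multiplicative constraint on $a$, verifying that each listed form does satisfy $d(p,q,i)-d(p,q,\phi(i))\in 2\Z$ is a direct check against Corollary~\ref{cor:integralconditions}, closing the classification.
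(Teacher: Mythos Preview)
Your outline has the right high-level shape (use conjugation symmetry to locate the center $s$, then extract the multiplicative constraint on $a$ from Corollary~\ref{cor:integralconditions}), but there is a genuine gap at the very first step. You propose to \emph{prove} that $\phi$ is affine from the conditions $d(p,q,i)-d(p,q,\phi(i))\in 2\Z$ together with $\iota$-equivariance. This is both unnecessary and, as stated, unjustified: being an odd bijection of $\Zp$ certainly does not force linearity (any pair-permutation of $\{j,-j\}$ is odd), and the $2\Z$-divisibility condition from Corollary~\ref{cor:integralconditions} only gives, for each $i$, a quadratic congruence involving $i$ and $\phi(i)$---far too weak to pin down $\phi$ on a generator without already knowing $\phi$ is affine. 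The paper sidesteps all of this because affinity is \emph{input}, not output: the labeling maps $\phi_{K,p/q}$ and $\phi_{K',p/q}$ in \eqref{eq:spinccorrespondence} are affine bijections by definition, so $\phi=\phi_{K,p/q}^{-1}\circ\phi_{K',p/q}$ is affine from the start.

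Once you simply write $\phi(i)=a(i-s_0)+s_1$ with $a\in(\Zp)^\times$, the conjugation symmetry $J\circ\phi=\phi\circ J$ immediately forces $s_0,s_1\in\mathrm{Fix}(J)=\{\tfrac{q-1}{2},\tfrac{p+q-1}{2}\}\cap\Z$; this gives the trichotomy (I)~$p$ odd so $|\mathrm{Fix}(J)|=1$ and $s_0=s_1$, (II)~$p$ even and $s_0=s_1$, (III)~$p$ even and $s_0\ne s_1$. The constraints on $a$ and the $8\mid p$ condition in case~III then fall out directly by plugging the single values $i=s_0$ and $i=s_0+1$ into Corollary~\ref{cor:integralconditions}; no mod-$4$ refinement from Lemma~\ref{lem:2Zcondition} is needed separately, since for $p$ even Corollary~\ref{cor:integralconditions} already works modulo $2p$. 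In particular, evaluating at $i=s_0$ in case~III gives $\tfrac{p^2}{4}\equiv 0\bmod 2p$, i.e.\ $8\mid p$, and evaluating at $i=s_0+1$ gives $a^2\equiv p+1\bmod 2p$. So the argument is a short direct computation once you use the affinity that comes for free.
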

\begin{proof}
Let $J:\Zp \rightarrow \Zp$ be the map $J(i)=q-1-i \bmod p$. Since the $d$-invariants are invariant under conjugation, \eqref{eq:conjugation} shows that $J\circ \phi = \phi \circ J$.
Since $\phi$ is an affine bijection, we may assume that it can be written in the form
\[\phi(i)=a(i-s_0)+s_1 \bmod{p},\]
for some $s_0 \in {\rm Fix}(J)=\{\frac{q-1}{2},\frac{p+q-1}{2}\}\cap \Z$ and some $a\in (\Zp)^\times$. Using the invariance of $d$-invariants under conjugation, we obtain
\[J(s_1)=J(\phi(s_0))=\phi(J(s_0))=s_1.\]
This shows that we also have $s_1 \in {\rm Fix}(J)$.

\paragraph{}First assume that $p$ is odd. Since $|{\rm Fix}(J)|=1$ in this case, we have $s_0=s_1$.
Since $d(p,q,1+s_0)-d(p,q,\phi(1+s_0))\in 2\Z$, Corollary~\ref{cor:integralconditions} shows that
\begin{align*}
0 &\equiv (\phi(1+s_0)+1+s_0-q+1)(\phi(1+s_0)-(1+s_0)) \bmod p\\
 &\equiv (a + s_0 + 1+s_0 - (q-1))(a +s_0-1-s_0))\bmod p \\
  &\equiv a^2-1 \bmod p.
\end{align*}
This shows $\phi$ takes the form given by type I.

\paragraph{}Now assume that $p$ is even and $s_0=s_1$. Since $q$ is necessarily odd, we may assume that $s_0=\frac{q-1}{2}$. Since $d(p,q,1+s_0)-d(p,q,\phi(1+s_0))\in 2\Z$, Corollary~\ref{cor:integralconditions} shows that
\begin{align*}
0 &\equiv (\phi(1+s_0)+1+s_0-q+1)(\phi(1+s_0)-(1+s_0)) \bmod 2p\\
 &\equiv (a + s_0 + 1+s_0 - (q-1))(a +s_0-1-s_0))\bmod 2p \\
  &\equiv a^2-1 \bmod 2p.
\end{align*}
This shows $\phi$ takes the form given by type II.

\paragraph{} Finally, assume that $p$ is even and $s_0\ne s_1$. We may assume that $s_1=\frac{p+q-1}{2}=s_0+\frac{p}{2}$. Since $d(p,q,s_0)-d(p,q,\phi(s_0))\in 2\Z$ Corollary~\ref{cor:integralconditions} shows that
\begin{align*}
0 &\equiv (\phi(s_0)+s_0-q+1)(\phi(s_0)-s_0) \bmod 2p\\
 &\equiv (s_0+s_1 - (q-1))(s_1-s_0)\bmod 2p \\
 &\equiv \frac{p^2}{4} \bmod 2p,
\end{align*}
which implies that $p\equiv 0 \bmod 8$. Similarly, from $d(p,q,s_0+1)-d(p,q,\phi(s_0+1))\in 2\Z$, we obtain
\begin{align*}
0 &\equiv (\phi(1+s_0)+1+s_0-q+1)(\phi(1+s_0)-1-s_0) \bmod 2p\\
 &\equiv (a+s_1 + 1+s_0 - (q-1))(a+s_1-s_0-1)\bmod 2p \\
 &\equiv (a+\frac{p}{2})^2-1 \bmod 2p\\
 &\equiv a^2 +p -1 \bmod 2p.
\end{align*}
This shows $\phi$ takes the form given by type III.
\end{proof}

This allows us to put bounds on $\nu^+(K)$ when $\phi$ is not the identity or the map corresponding to conjugation.
\begin{lem}\label{lem:genusbound}
If $\phi_{K,p/q}\ne\phi_{K',p/q}$ and $\phi_{K,p/q}\ne\overline{\phi_{K',p/q}}$, then
\[\nu^+(K)> \frac{p}{4q}+\frac{1}{2} - \frac{3}{q}-q.\]
\end{lem}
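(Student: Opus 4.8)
The strategy hinges on \eqref{eq:Vrelation}: if we can produce a single index $i$ with $d(p,q,i)>d(p,q,\phi(i))$, then the right--hand side of \eqref{eq:Vrelation} is positive, so $V_{f(i)}(K)>0$ and hence $\nu^+(K)\geq f(i)+1$. Since one checks directly that $f(i)\geq \frac{p-q+1-|p+q-1-2i|}{2q}$, it suffices to find such an $i$ with $|p+q-1-2i|\leq \frac p2+2q^2-2q+7$; then $f(i)\geq \frac p{4q}+\frac12-\frac3q-q$ and the lemma follows. So the whole problem reduces to locating one good index.

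To estimate the relevant $d$--invariants I would iterate \eqref{eq:recursive1} once and apply Lemma~\ref{lem:overalldbound}: for $0\leq j\leq p-1$ one has $d(p,q,j)=\frac{(p+q-1-2j)^2}{4pq}-\frac14-d(q,r,j')$ with $r\equiv p\pmod q$, $j'\equiv j\pmod q$ and $|d(q,r,j')|\leq \frac{q-1}{4}$. Writing $u(j):=p+q-1-2j$, this shows $d(p,q,i)-d(p,q,\phi(i))$ differs from $\frac{u(i)^2-u(\phi(i))^2}{4pq}$ by at most $\frac{q-1}{2}$, so it is enough to find $i$ with $u(i)^2-u(\phi(i))^2>2pq(q-1)$ and $|u(i)|\leq \frac p2+2q^2-2q+7$.

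Now the hypotheses enter through Proposition~\ref{prop:mapform}. Since $\phi\neq\mathrm{id}$ and $\phi\neq J$ (the conjugation $i\mapsto q-1-i$), $\phi$ has the form $\phi(i)=a(i-s)+s+\varepsilon\pmod p$ with $a^2\equiv1\pmod p$ but $a\not\equiv\pm1\pmod p$ (one checks that $a\equiv1$ forces $\phi=\mathrm{id}$ and $a\equiv-1$ forces $\phi=J$ in each of the three types), and after replacing $a$ by $a+p$ if necessary we may take $a$ odd. Tracking $u$ through the affine formula gives $u(\phi(i))\equiv a\,u(i)+C_0\pmod{2p}$ with $C_0\in\{0,p\}$ determined by the type. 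The key arithmetic input is that a nontrivial square root of $1$ is not small: from $p\mid(a-1)(a+1)$ with $a\not\equiv\pm1$ one gets $A:=\min\bigl(a\bmod p,\,p-(a\bmod p)\bigr)\geq\sqrt{p+1}$.

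It remains to produce the index, and this is the heart of the matter; I would argue by contradiction. If no good $i$ existed, then $u(\phi(i))^2\geq u(i)^2-2pq(q-1)$ for every $i$ with $|u(i)|\leq\frac p2+2q^2-2q+7$. Since $u(\phi(i))$ is the centred residue modulo $2p$ of $aw+C_0$, where $w=u(i)$, and $a$ is a unit with $a^2\equiv 1\pmod p$ (so that prescribing a small value of $u(\phi(i))$ of the right parity amounts, after a short parity check, to taking $w$ to be the centred residue of $a(t-C_0)$), the condition forces the arithmetic progression $w\mapsto aw+C_0\pmod{2p}$, taken over the central window of valid $w$ --- a progression of common difference $2A\geq 2\sqrt{p+1}$ --- to avoid a nested family of intervals about $0$ whose radii shrink from roughly $\frac p2$ down to $0$. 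But by the three--distance theorem the multiples of $2A$ modulo $2p$ stay away from $0$ until one has taken on the order of $\sqrt p$ of them, and hence over the whole window (of length $\approx\frac p2\gg\sqrt p$) they have uniformly small gaps --- small enough, once $p$ is large relative to $q$, that the progression is forced into a forbidden interval, a contradiction. I expect this last step --- the combinatorics of the progression, the bookkeeping over the parity of $w$, the two choices of $s$ (especially whether $s$ equals the centre $\frac{p+q-1}{2}$) and the type--III case $C_0=p$, and the constant--chasing that pins down exactly $\frac12-\frac3q-q$ --- to be the main obstacle. (When $p$ is small relative to $q$ the asserted inequality is vacuous, since $\nu^+(K)\geq0$ always, so one may assume throughout that $p$ is large enough for all these estimates to have room to spare.)
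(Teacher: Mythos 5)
Your reduction is sound and matches the paper's framework exactly: both use \eqref{eq:Vrelation} to convert a single index $i$ with $d(p,q,i)>d(p,q,\phi(i))$ into $V_{f(i)}(K)>0$, both invoke Proposition~\ref{prop:mapform} to write $\phi$ as $x\mapsto a(x-s_0)+s_1$ with $a^2\equiv 1\bmod p$, $a\not\equiv\pm1\bmod p$, and both estimate the $d$-invariant difference via one application of \eqref{eq:recursive1} together with Lemma~\ref{lem:overalldbound}. But the step you explicitly defer --- actually producing an index $i$ in the prescribed window with $u(\phi(i))$ small --- is the entire content of the lemma, and your sketch does not establish it. The contradiction argument via the three-distance theorem is never carried out, and as set up it cannot deliver the stated bound without exactly the constant-chasing you postpone: the lemma is non-vacuous as soon as $p>4q^2-2q+12$, i.e.\ $p$ only comparable to $q^2$, so the parenthetical ``one may assume $p$ is large enough for all these estimates to have room to spare'' is not available. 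In that regime the two competing effects are genuinely tight: when $a$ is near $\sqrt{p}$ one can only force $|u(\phi(i))|\lesssim\sqrt{p}$, and when $a$ is near $p/2$ the relevant window of indices has length $p/a\approx 2$, so $|u(i)|$ may have to drift by about $2p/a$; balancing these is precisely where the constants $\frac12-\frac3q-q$ come from, and nothing in your outline recovers them.

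For comparison, the paper's route to the index is much more direct than your proposed contradiction: since $\phi(x+n)\equiv\phi(x)+na\bmod p$, as $x$ runs over any interval of length $p/a$ the values $\phi(x)$ must land within $a/2$ of the self-conjugate centre $\frac{p+q-1}{2}$ (a one-line pigeonhole, no three-distance theorem and no parity bookkeeping, since the type-III shift by $p/2$ is absorbed into the constant term). Choosing the window to sit just below $N=\frac{p+q-1-\sqrt{2pq^2+a^2}}{2}$ makes $\frac{u(x)^2-a^2}{4pq}-\frac q2\geq 0$, hence $d(p,q,x)-d(p,q,\phi(x))>0$, with $k\geq\bigl\lfloor\frac Nq-\frac p{aq}\bigr\rfloor$; then, after using conjugation invariance to place $a$ in $(\sqrt{p},p/2)$, one maximizes $\frac{\sqrt{2pq^2+a^2}}{2q}+\frac p{aq}$ over that range (the maximum occurs at an endpoint) and checks both endpoints are below $\frac{p+4q^2+10}{4q}$. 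If you replace your unfinished equidistribution argument by this pigeonhole-plus-optimization step, your write-up becomes essentially the paper's proof; as it stands, the key quantitative step is missing.
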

\begin{proof}
Consider the map
\[\phi =\phi_{K,p/q}^{-1}\circ \phi_{K',p/q}\colon \Zp \rightarrow \Zp\]
Proposition~\ref{prop:mapform} shows that if $\phi_{K,p/q}\ne\phi_{K',p/q}$ and $\phi_{K,p/q}\ne\overline{\phi_{K',p/q}}$, then $\phi$ takes the form $\phi(x)=a(x-s_0)+s_1$ for some $a\not\equiv \pm 1 \bmod{p}$ satisfying $a^2\equiv 1 \bmod{p}$. Since $d$-invariants are invariant under conjugation, we can assume that $a$ lies in the range $\sqrt{p}< a <p/2$.

\paragraph{} Since $\phi$ satisfies $\phi(x+n)\equiv \phi(x)+na \bmod{p}$ for all $x$ and $n$, we see that for any $\frac{p+q-1}{2}\geq N\geq \frac{p}{a}$, we can find $x$ in the range $N-\frac{p}{a}\leq x\leq N$ such that
\[\frac{p+q-1-a}{2}\leq \phi(x)\leq \frac{p+q-1+a}{2}.\]
For such an $x$ we have
\begin{align*}
d(p,q,x)-d(p,q,\phi(x))&= \frac{(p+q-1-x-\phi(x))(\phi(x)-x)}{pq}-d(q,r,x) +d(q,r,\phi(x))\\
&> \frac{(p+q-1+a-2x)(p+q-1-a-2x)}{4pq} - \frac{q}{2}\\
&= \frac{(p+q-1-2x)^2-a^2}{4pq} - \frac{q}{2}\\
&\geq \frac{(p+q-1-2N)^2-a^2}{4pq} - \frac{q}{2},
\end{align*}
where we used the bound $|d(q,r,\phi(x))-d(q,r,x)|<\frac{q}{2}$ arising from Lemma~\ref{lem:overalldbound} to obtain the second line, and the final line was obtained by observing that the quadratic in the preceding line is minimized for $x$ in the range $N-\frac{p}{a}\leq x\leq N$ by taking $x=N$.

\paragraph{}Thus if we take
\[N=\frac{p+q-1 - \sqrt{2pq^2+a^2}}{2},\]
then such an $x$ satisfies $d(p,q,x)-d(p,q,\phi(x))>0$. And hence we see that there is $k\in \Z$ such that $V_k>0$ and
\begin{equation}\label{eq:kbound}
k\geq \lfloor\frac{p+q-1 - \sqrt{2pq^2+a^2}}{2q}-\frac{p}{aq}\rfloor.
\end{equation}
We complete the proof by finding a lower bound for $k$ which is independent of $a$.

Consider $\frac{\sqrt{2pq^2+a^2}}{2q}+\frac{p}{aq}$ as a function of $a$. For $a>0$, this has a single critical value which is a minimum. Thus we see that for $a$ in the range $\sqrt{p}\leq a \leq \frac{p}{2}$ the minimal value of the right hand side of \eqref{eq:kbound} is attained by $a=\sqrt{p}$ or $a=\frac{p}{2}$. Therefore, using the bound $\sqrt{2pq^2+a^2}\leq  a+\frac{pq^2}{a}$ when $a=\frac{p}{2}$, we obtain
\[k\geq \lfloor\frac{p+q-1}{2q} - \max\{\frac{p+4q^2+8}{4q},\frac{\sqrt{p}(2+\sqrt{2q^2+1})}{2q}\}\rfloor.\]
However, one can show that\footnote{When considered as a quadratic in $\sqrt{p}$, the discriminant of
\[p-2(2+\sqrt{2q^2+1})\sqrt{p} + 4q^2+10\]
is
\[\Delta=4\sqrt{2q^2+1}-2q^2-4.\]
As this satisfies $\Delta<0$ for all $q$, we see that
\[\frac{\sqrt{p}(2+\sqrt{2q^2+1})}{2q}<\frac{p+4q^2+10}{4q}\]
for all $p$ and $q$.}
\[\max\{\frac{p+4q^2+8}{4q},\frac{\sqrt{p}(2+\sqrt{2q^2+1})}{2q}\}< \frac{p+4q^2+10}{4q}.\]
This gives the bound
\[k\geq \lfloor\frac{p}{4q}+\frac{1}{2} - \frac{3}{q}-q\rfloor,\]
showing that
\[\nu^+(K)> \frac{p}{4q}+\frac{1}{2} - \frac{3}{q}-q,\]
as required.
\end{proof}

We now have all the pieces to prove or main technical results.
{\customthm{Theorem~\ref{thm:HFKrecovery}}
Let $K,K'\subseteq S^3$ be knots such that $S_{p/q}^3(K)\cong S_{p/q}^3(K')$. If
\[|p| \geq 12+4q^2-2q +4qg(K)
\quad \text{and} \quad q\geq 3,\]
then $\Delta_K(t)=\Delta_{K'}(t)$, $g(K)=g(K')$ and $K$ is fibred if and only if $K'$ is fibred.}
\begin{proof}
Suppose that $S_{p/q}^3(K)\cong S_{p/q}^3(K')$. Since $S_{-p/q}^3(K)\cong -S_{-p/q}^3(\overline{K})$ for any $K \subseteq S^3$, we may assume that $p/q>0$. By Lemma~\ref{lem:genusbound}, the bound
\[p \geq 12+4q^2-2q +4qg(K)\]
implies that either $\phi_{K,p/q}=\phi_{K',p/q}$ or $\phi_{K,p/q}=\overline{\phi_{K',p/q}}$. In either case, the assumption $q\geq 3$ allows us to apply Lemma~\ref{lem:HFKrecovery2}, which shows that $V_k(K)=V_k(K')$ and $\ared{k}(K)\cong \ared{k}(K')$ for all $k\geq 0$. By Proposition~\ref{prop:aredproperties}, this is sufficient information to guarantee that $K$ and $K'$ have the same Alexander polynomial, genera and fibredness.
\end{proof}

{\customthm{Theorem~\ref{thm:technical2}}
Suppose that $K$ is an $L$-space knot. If $S_{p/q}^3(K)\cong S_{p/q}^3(K')$ for some $K'\subseteq S^3$ and either
\begin{enumerate}[(i)]
\item $p\geq 12+4q^2 -2q +4qg(K)$ or
\item $p\leq \min\{2q-12-4q^2, -2qg(K)\}$ and $q\geq 2$
\end{enumerate}
holds, then $\Delta_K(t)=\Delta_{K'}(t)$, $g(K)=g(K')$ and $K'$ is fibred.}
\begin{proof}
Suppose that $S_{p/q}^3(K)\cong S_{p/q}^3(K')$, where $K$ is an $L$-space knot. First suppose that
\[p\geq 4q^2+12 -2q + 4qg(K).\] In this case, $S_{p/q}^3(K)$ is an $L$-space. Lemma~\ref{lem:genusbound} implies that either $\phi_{K,p/q}=\phi_{K',p/q}$ or $\phi_{K,p/q}=\overline{\phi_{K',p/q}}$. Thus we can apply Lemma~\ref{lem:HFKrecovery2} which shows that $V_k(K)=V_k(K')$ and $\ared{k}(K)\cong \ared{k}(K')=0$ for all $k$. By Proposition~\ref{prop:aredproperties}, this is sufficient information to guarantee that $K$ and $K'$ have the same Alexander polynomial and genus. As $K$ is fibred it also shows that $K'$ is fibred.

If $p\leq \min\{2q-12-4q^2, -2qg(K)\}$ and $q\geq 2$, then we use the fact that $S_{-p/q}^3(\overline{K})\cong -S_{p/q}^3(\overline{K'})$. As $\nu^+(\overline{K})=0$, the condition $-p\geq 4q^2+12-2q$ shows that $\phi_{\overline{K},-p/q}=\phi_{\overline{K'},-p/q}$ or $\phi_{\overline{K},-p/q}=\overline{\phi_{\overline{K'},-p/q}}$. As $-p/q>2qg(K)-1$ and $\ared{k}(\overline{K})\cong \mathcal{T}(V_{|k|}(K))$ for all $k$, Lemma~\ref{lem:HFKrecovery2} shows that $V_k(\overline{K})=V_k(\overline{K'})=0$ and $\ared{k}(\overline{K})\cong \ared{k}(\overline{K'})$ for all $k\geq 0$. This shows that $K$ and $K'$ have the same Alexander polynomial and genus. As $K$ is fibred it also shows that $K'$ is fibred.
\end{proof}

\section{Surgeries on torus knots}
In this section, we prove Theorem~\ref{thm:toruscharslopes}. In order to do this we will need to understand the manifolds obtained by surgery on torus knots. It is well-known that, with the exception of a single reducible surgery for each torus knot, the manifolds obtained by surgery on a torus knot are Seifert fibred spaces \cite{Moser71elementary}. We will use $S^2(e; \frac{b_1}{a_1}, \frac{b_2}{a_2},\frac{b_3}{a_3})$ to denote the 3-manifold obtained by surgery on the link given in Figure~\ref{fig:sfspace}.\footnote{In this notation we have
\[S_1^3(T_{3,2})\cong S^2(-2; \frac{1}{2},\frac{2}{3},\frac{4}{5})\cong P,\]
 where $P$ is the Poincar\'{e} sphere oriented so that it bounds the positive-definite $E_8$-plumbing.}
This is a Seifert fibred space when $a_i\ne 0$ for $i=1,2,3$ and is a lens space only if $|a_i|=1$ for some $i$. Recall that if $|a_i|>1$ for $i=1,2,3$, then
\[S^2(e; \frac{b_1}{a_1}, \frac{b_2}{a_2},\frac{b_3}{a_3}) \cong S^2(e'; \frac{d_1}{c_1}, \frac{d_2}{c_2},\frac{d_3}{c_3})\]
if and only if
\[e+ \frac{b_1}{a_1}+ \frac{b_2}{a_2}+\frac{b_3}{a_3}= e'+ \frac{d_1}{c_1}+ \frac{d_2}{c_2}+\frac{d_3}{c_3}\]
and there is a permutation $\pi$ of $\{1,2,3\}$ such that
\[\frac{b_i}{a_i}\equiv \frac{c_{\pi(i)}}{d_{\pi(i)}} \bmod 1 \quad\text{for $i=1,2,3$ \cite{neumann78seifert}.}\]
\begin{figure}
\centering
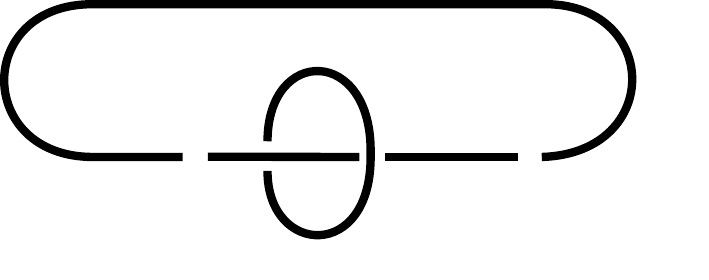
\caption{A surgery presentation for the Seifert fibred space $S^2(e; \frac{b_1}{a_1}, \frac{b_2}{a_2},\frac{b_3}{a_3})$}\label{fig:sfspace}
\end{figure}
We can describe surgeries on torus knots as follows.
\begin{prop}[cf. Moser \cite{Moser71elementary}]\label{prop:torussurgerycalculation}
For $r,s>1$ and any $p/q$,
\[
S_{p/q}^3(T_{r,s})\cong S^2(e; \frac{s'}{s},\frac{r'}{r},\frac{q}{p-rsq}),
\]
where $r'$, $s'$ and $e$ are any integers satisfying $rs'+sr'+ers=1$.
\end{prop}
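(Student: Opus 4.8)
The plan is to realize $p/q$-surgery on $T_{r,s}$ by a Rolfsen-type twist on a standard surgery diagram for the torus knot. Recall that $T_{r,s}$ sits on a genus-one Heegaard surface of $S^3$, so its exterior is the union of two solid-torus exteriors glued along an annulus; concretely, $S^3\setminus\nu T_{r,s}$ is Seifert fibred over a disk with two exceptional fibers of orders $r$ and $s$. This means that doing any surgery on $T_{r,s}$ produces a manifold that is Seifert fibred over $S^2$ with at most three exceptional fibers, and the first two of these (with cone orders $r$ and $s$) are already fixed by the knot exterior. The only data that depends on the surgery coefficient is the filling slope, which contributes the third exceptional fiber. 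So the whole computation reduces to: (1) write down a surgery description of the Seifert fibration on the exterior, and (2) track what the $p/q$-filling does to the surgery coefficients.

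Concretely, I would start from the presentation in Figure~\ref{fig:sfspace}: the exterior of $T_{r,s}$ is presented by the chain consisting of an unknotted axis with framing $-e$ together with two meridional circles carrying coefficients $s'/s$ and $r'/r$, where the regular fiber on the boundary torus of $T_{r,s}$ is the curve of slope $rs$ (the surface-framing coming from the Seifert surface is the framing $rs$, since $T_{r,s}$ has Seifert-framed longitude differing from the fiber slope by $rs$). The key point is that, with respect to the basis (meridian $\mu$, Seifert longitude $\lambda$) of $\partial \nu T_{r,s}$, the Seifert fiber $h$ is homologous to $\mu^{rs}\lambda$ — equivalently, the fiber has slope $rs$. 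Filling along the slope $p\mu + q\lambda$ therefore glues in a solid torus meeting the fiber $rs\mu+\lambda$ with intersection number $|(p)(1)-(q)(rs)| = |p - qrs|$, so the new exceptional fiber has order $|p-rsq|$ and is encoded by a surgery coefficient of the form $q/(p-rsq)$ on the third meridional circle. Putting these three pieces together with an axis coefficient $-e$ gives exactly $S^2(e;\frac{s'}{s},\frac{r'}{r},\frac{q}{p-rsq})$.

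It remains to identify the integers $r'$, $s'$, $e$. These are exactly the data making the background $S^3$ come out right: when $q=1$ and $p=rs$ the filling slope equals the fiber slope, the third circle's coefficient degenerates, and the result must be a connected sum of lens spaces (the unique reducible surgery); more usefully, when we instead do the trivial ($\infty$) filling we must recover $S^3$ itself, and the Seifert-invariant of $S^3$ presented on two exceptional fibers of orders $r,s$ forces the arithmetic relation $rs' + sr' + ers = 1$, i.e. $e + \frac{r'}{r} + \frac{s'}{s} = \frac{1}{rs}$, which is precisely the Euler-number normalization that makes $S^2(e;\frac{s'}{s},\frac{r'}{r},\cdot\,)$ the exterior of $T_{r,s}$ rather than some other Seifert piece. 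So I would (i) establish the Seifert structure on the knot exterior and the fiber-slope computation, (ii) perform the slam-dunk/Rolfsen twist to convert $p/q$-filling into the coefficient $q/(p-rsq)$, and (iii) pin down $r',s',e$ by demanding that $\infty$-filling give $S^3$, which yields the stated Diophantine condition $rs'+sr'+ers=1$ and simultaneously shows that any choice of such integers gives the same Seifert fibred manifold by the Neumann–Raymond classification recalled above.

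The main obstacle is bookkeeping the framings correctly: one must be careful that the ``fiber slope'' $rs$ is measured against the \emph{Seifert-framed} longitude (so that $p/q$-surgery in the knot-theoretic sense matches the slope $p\mu+q\lambda$), and that the signs/orientations in the passage from the filling slope to the coefficient $q/(p-rsq)$ are consistent with the orientation conventions built into the notation $S^2(e;\cdots)$. Once the fiber slope and one reference filling (namely $\infty\mapsto S^3$) are nailed down, everything else is forced, and the uniqueness statement about $r',s',e$ is immediate from the classification of Seifert fibred spaces over $S^2$ with three exceptional fibers recalled just before the proposition.
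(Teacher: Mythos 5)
Your geometric setup coincides with the paper's: both arguments view $T_{r,s}$ as a regular fibre of the Seifert fibration of $S^3$ with two exceptional fibres of orders $r$ and $s$, use that a nearby regular fibre has linking number $rs$ with the knot to rewrite the filling slope $p\mu+q\lambda=(p-qrs)\mu+q\kappa$ in the fibre-framed basis, and conclude that the filling contributes the third Seifert coefficient $\frac{q}{p-rsq}$ while $e,r',s'$ come from the exterior and are independent of $p/q$. The gap is in your normalization step. You claim that demanding the $\infty$-filling return $S^3$ ``yields the stated Diophantine condition $rs'+sr'+ers=1$'' and that once this reference filling is fixed ``everything else is forced.'' It is not: the $\infty$-filling only sees the two-fibre space $S^2(e;\frac{s'}{s},\frac{r'}{r})$, and its being $S^3$ is equivalent to $|rs'+sr'+ers|=1$; since $S^3$ admits an orientation-reversing self-homeomorphism, this single reference filling cannot distinguish $+1$ from $-1$. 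The sign is not cosmetic: writing $A=rs'+sr'+ers$, the filled manifold has $|H_1|=|(p-rsq)A+qrs|$, which equals $|p|$ for all $p/q$ only when $A=1$. With $A=-1$ one gets $|-p+2qrs|$; for instance for $T_{3,2}$ and $p/q=1$ the permitted ``wrong'' choices give a Seifert space with $|H_1|=11$ rather than the Poincar\'e sphere, so the proposition as stated would be false for some of the triples $(e,r',s')$ your argument allows.

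The paper closes exactly this point by the $H_1$ computation just described: the identity $|p|=|pA-qrs(A-1)|$ must hold with the same $e,r',s'$ for every $q$, which forces $A=1$ on the nose. You could repair your argument the same way, or by checking a single finite filling against $|H_1(S^3_{p/q}(T_{r,s}))|=|p|$, or by actually carrying out the Rolfsen/Kirby calculus you allude to, which produces explicit $e,r',s'$ (and hence the sign) for free rather than leaving them to be determined by a reference filling. Your remaining points are fine: the fibre-slope bookkeeping matches the paper's change of basis, and the fact that any two solutions of $rs'+sr'+ers=1$ give the same Seifert fibred manifold follows from the classification recalled before the proposition.
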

\begin{proof}
Consider the Seifert fibration of $S^3$ with exceptional fibres of order $r$ and $s$ for which the regular fibres are isotopic to the torus knot $T_{r,s}$. We can obtain $S_{p/q}^3(T_{r,s})$ by surgering a regular fibre $K\subseteq S^3$. Let $\mu$ be a meridian for $K$ and $\lambda$ a null-homologous longitude. As the linking number between $K$ and a nearby regular fibre is $rs$, the surgery slope can be written as
 \[p\mu + q \lambda = (p-qrs)\mu + q\kappa,\]
where $\kappa$ is a longitude for $K$ given by a regular fibre. This shows that
 \[S_{p/q}^3(T_{r,s})\cong S^2(e; \frac{s'}{s},\frac{r'}{r},\frac{q}{p-rsq}),\]
 for some $s',r',e\in \Z$ which are independent of $p/q$. Considering the order of the homology group $H_1(S_{p/q}^3(T_{r,s}))$ shows that
 \begin{align*}
 |p|&=|rs(p-rsq)(e+\frac{s'}{s}+\frac{r'}{r}+\frac{q}{p-rsq})|\\
 &=|p(rs'+sr'+rse)-qrs(rs'+sr'+ers-1)|.
 \end{align*}
 As this holds for any $q$ it follows that we have $rs'+sr'+ers=1$, as required.
\end{proof}

We will also use the Casson-Walker invariant \cite{walker92extension}. For any rational homology sphere $Y$, this is a rational-valued invariant $\lambda(Y)\in\Q$. For our purposes its most useful property is that it is easily computed for manifolds obtained by surgery. For any knot $K'\subseteq S^3$, the Casson-Walker invariant satisfies \cite{Boyer90cassonsurgery}:
\begin{equation}\label{eq:BoyerLines}
\lambda(S_{p/q}^3(K'))=\lambda(S^3_{p/q}(K)) + \frac{q}{2p} \Delta_{K'}''(1).
\end{equation}
In particular, this means that if $S_{p/q}^3(K)\cong S_{p/q}^3(T_{r,s})$, then the Alexander polynomial of $K$ satisfies
\begin{equation}\label{eq:secondderiv}
\Delta_{K}''(1)=\Delta_{T_{r,s}}''(1)=\frac{(r^2-1)(s^2-1)}{12}.
\end{equation}
Recall also that if $K$ is a satellite knot with pattern $P$ and companion $K'$, then its Alexander polynomial satisfies
\begin{equation*}\label{eq:satellitealexpoly}
\Delta_{K}(t)=\Delta_{K'}(t^w)\Delta_{P}(t),
\end{equation*}
where $w$ is the winding number of $P$. In particular, this means that
\begin{equation}\label{eq:satellitederiv}
\Delta_{K}''(1)= \Delta_{P}''(1)+ w^2\Delta_{K'}''(1).
\end{equation}
Next we consider the possibility that a torus knot and a cable of a torus knot share a non-integer surgery.

{\customthm {Proposition~\ref{prop:cablenoncharslopes}}
For $s>r>1$ and $q\geq 2$, there exists a non-trivial cable of a torus knot $K$ such that $S_{p/q}^3(K)\cong S_{p/q}^3(T_{r,s})$ if and only if
\[s=\frac{rq^3 \pm 1}{q^2-1},\quad p= \frac{r^2 q^4 -1}{q^2-1}, \quad q=\lfloor s/r\rfloor \quad\text{and}\quad r>q,\]
in which case $K$ is the $(q,\frac{q^2r^2-1}{q^2-1})$-cable of $T_{r,\frac{rq\pm 1}{q^2-1}}$.}

\begin{proof}
Suppose that $K$ is the $(w,c)$-cable of $T_{a,b}$, where $w>1$ is the winding number of the pattern and that this satisfies
\[Y\cong S_{p/q}^3(K)\cong S_{p/q}^3(T_{r,s}).\]
If $Y$ is reducible, then $p/q=wc=rs$ is an integer. So we can assume from now on that $Y$ is irreducible. We will temporarily drop the assumption that $s>r$ and assume for now only that $r,s>1$.

Since $Y$ does not contain an incompressible torus, $p/q$ must take the form
\[\frac{p}{q}=wc\pm \frac{1}{q}\]
and $Y\cong S_{p/(qw^2)}^3(T_{a,b})$ \cite{Gordon83Satellite}. As $q>1$ and $\gcd(w,c)=1$, we have
\[|qwc \pm 1 -abqw^2|=|qw(c-abw)\pm 1|\geq |qw|-1.\]
This shows that $Y$ is not a lens space if $q>1$. Thus we can assume $Y$ is a Seifert fibred space with base orbifold
\[S^2(r,s,|p-rsq|) = S^2(|a|,|b|,|p-abqw^2|).\]
By considering the order of the exceptional fibers, we see we may assume that $a=r$. Using the Casson-Walker invariant as in \eqref{eq:secondderiv} and \eqref{eq:satellitederiv}, we see that
\[(a^2-1)(s^2-1)= w^2(a^2-1)(b^2-1)+ (c^2-1)(w^2-1).\]
This implies that $|b|<s$. Thus we must have
\[\varepsilon_1 s = p-abqw^2
\quad \text{and} \quad
\varepsilon_2 b = p-asq\]
for some $\varepsilon_1,\varepsilon_2 \in \{\pm1\}$. Solving these simultaneous equations shows that
\begin{equation}\label{eq:bsformulae}
s=\frac{p(aqw^2-\varepsilon_2)}{(aqw)^2-\varepsilon_1 \varepsilon_2}
\quad \text{and}\quad
b=\frac{p(aq-\varepsilon_1)}{(aqw)^2-\varepsilon_1 \varepsilon_2}.
\end{equation}
Since $a,s>1$ it follows that $p>0$ and $b>0$.

Using Proposition~\ref{prop:torussurgerycalculation} and our two surgery descriptions, we see that $Y$ can be written in the form
\begin{equation*}
Y\cong S^2(-1;\frac{a'}{a},\frac{s'}{s},\frac{\varepsilon_2q}{b})\cong S^2(-1;\frac{a'}{a},\frac{\varepsilon_1qw^2}{s},\frac{b'}{b})
\end{equation*}
for some $a',b',s'\in \Z$, where $1\leq b'<b$ and $1\leq s' <s$. Comparing these two descriptions of $Y$ as a Seifert fibred space we see that $b'$ satisfies
\begin{equation}\label{eq:bproperties}
b'\equiv \varepsilon_2q \bmod{b}
\quad \text{and} \quad
b'a \equiv 1 \bmod{b};
\end{equation}
$s'$ satisfies
\begin{equation}\label{eq:sproperties}
s'\equiv \varepsilon_1qw^2 \bmod{s}
\quad \text{and} \quad
s'a \equiv 1 \bmod{s};
\end{equation}
and we have
\begin{equation}\label{eq:sumseifinv}
-1+ \frac{a'}{a}+ \frac{\varepsilon_2q}{b}+\frac{s'}{s}=-1+ \frac{a'}{a}+\frac{\varepsilon_1qw^2}{s}+\frac{b'}{b}.
\end{equation}
\begin{claim}
We have $\varepsilon_1=\varepsilon_2$.
\end{claim}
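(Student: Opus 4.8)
The plan is to combine the two equations for $s$ and $b$ in \eqref{eq:bsformulae} with the two congruences for $b'$ in \eqref{eq:bproperties}, and derive a contradiction from the case $\varepsilon_1 = -\varepsilon_2$. The key observation is that \eqref{eq:bsformulae} already expresses $s$ and $b$ in terms of $p$, $a$, $q$, $w$ and the signs $\varepsilon_1,\varepsilon_2$; in particular the sign combination enters only through the product $\varepsilon_1\varepsilon_2$ in the denominators and through the individual signs in the numerators. So I would first rewrite everything in terms of $\varepsilon := \varepsilon_1 \varepsilon_2$, treat the case $\varepsilon = 1$ (which is the claim) against the case $\varepsilon = -1$, and show the latter is impossible.

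\medskip

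The concrete route: from \eqref{eq:bsformulae}, $b = \dfrac{p(aq - \varepsilon_1)}{(aqw)^2 - \varepsilon}$. Since $1 \le b' < b$ and $b' \equiv \varepsilon_2 q \bmod b$, either $b' = \varepsilon_2 q$ (if $\varepsilon_2 = 1$ and $q < b$) or $b' = b - q$ (if $\varepsilon_2 = -1$), with the degenerate possibilities ruled out because $b > q$ — which itself follows since $a \ge 2$ forces $(aqw)^2 - \varepsilon$ to be comfortably smaller than $p(aq - \varepsilon_1)/q$ given $p > 0$ and the formula for $s > 1$. Plug the resulting value of $b'$ into the second congruence $b' a \equiv 1 \bmod b$. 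Doing the same for $s'$ using \eqref{eq:sproperties}, and then feeding both into the sum-of-Seifert-invariants identity \eqref{eq:sumseifinv}, yields (after clearing denominators) a single Diophantine relation among $a, q, w, p$. The point is that the $\varepsilon_1 = -\varepsilon_2$ case and the $\varepsilon_1 = \varepsilon_2$ case produce relations of different parity or different size, and only the latter is consistent. Concretely, $aq \pm 1$ with opposite signs changes $b + \varepsilon_2 q$ versus $s + \varepsilon_1 q w^2$ in a way incompatible with \eqref{eq:sumseifinv} once one uses $a \mid$ (the relevant combination). I expect the cleanest finish is to show $\varepsilon_1 = -\varepsilon_2$ forces $a \mid \varepsilon_1 w^2 + \varepsilon_2$ in a way that, together with $0 < a' < a$ and the explicit sizes of $b$ and $s$, has no solution.

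\medskip

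The main obstacle will be bookkeeping: \eqref{eq:sumseifinv} has three fractions on each side with genuinely different denominators $a$, $b$, $s$, and reducing it to a usable integer statement requires pinning down $a'$, $b'$, $s'$ exactly (not just mod something), which in turn requires the size estimates $q < b$, $q w^2 < s$, $0 < a' < a$. Getting those estimates cleanly from \eqref{eq:bsformulae} — using only $a, s > 1$, $p > 0$, $w > 1$, $q \ge 2$ — is the delicate part; once the exact values of $a', b', s'$ are in hand the contradiction in the $\varepsilon_1 \neq \varepsilon_2$ case should fall out of a short congruence computation. An alternative, possibly slicker, approach I would keep in reserve: use the Casson–Walker identity one more time, or compare the orders $|p - abqw^2| = s$ and $|p - asq| = |b|$ directly — the signs $\varepsilon_1, \varepsilon_2$ there are exactly the signs of $p - abqw^2$ and $p - asq$, and a congruence mod $a$ (using $p \equiv \varepsilon_1 b q w^2 \equiv \varepsilon_2 s q \bmod a$ is wrong; rather $r = a \mid p$? no) — so I would fall back on the Seifert-invariant comparison as the primary argument.
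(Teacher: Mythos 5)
Your plan never actually closes the case $\varepsilon_1=-\varepsilon_2$: the decisive step is only announced (``relations of different parity or different size'', ``I expect the cleanest finish is to show $\varepsilon_1=-\varepsilon_2$ forces $a\mid \varepsilon_1 w^2+\varepsilon_2$ \dots''), and no computation is given that rules that case out, so the heart of the claim is missing. Moreover, the scaffolding you propose to build before reaching that step is itself shaky: you want to pin down $b'$ and $s'$ exactly (e.g.\ $b'=\varepsilon_2 q$ or $b'=b-q$), and for this you assert $b>q$ ``follows since $a\geq 2$ forces $(aqw)^2-\varepsilon$ to be comfortably smaller than $p(aq-\varepsilon_1)/q$''. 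That estimate is false in general: configurations compatible with everything known at this point of the argument (and indeed with the final classification) have $b<q$, for instance $a=r=5$, $q=w=3$, $b=2$, $s=17$, $p=253$, where $\varepsilon_2=-1$ and $b'\equiv -q \bmod b$ is not $b-q$. So the planned route to an exact Diophantine relation does not even get off the ground without substantial repair.

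The point you are missing is that \eqref{eq:sumseifinv} can be used much more softly. Cancelling the common terms $-1+\frac{a'}{a}$ gives
\[\frac{s'-\varepsilon_1 q w^2}{s}=\frac{b'-\varepsilon_2 q}{b},\]
and the congruences \eqref{eq:sproperties} and \eqref{eq:bproperties} say precisely that both sides are integers. With only the normalizations $1\leq s'<s$ and $1\leq b'<b$ (no exact determination of $s',b'$ and no size comparison between $q$, $qw^2$ and $b$, $s$ needed), each side is a positive integer if and only if the corresponding sign is $-1$, and is $\leq 0$ if and only if the corresponding sign is $+1$; equality of the two sides then forces $\varepsilon_1=\varepsilon_2$. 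This is the paper's argument, and it bypasses entirely the bookkeeping with $a'$, the formulae \eqref{eq:bsformulae}, and the congruence $b'a\equiv 1 \bmod b$ that your plan leans on.
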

\begin{proof}[Proof of Claim]
By \eqref{eq:sumseifinv}, we see that
\[\frac{s'-\varepsilon_1qw^2}{s}=\frac{b'-\varepsilon_2q}{b}.\]
Equations \eqref{eq:bproperties} and \eqref{eq:sproperties} show that both sides of this equation are integers. Moreover the assumptions that $1\leq s'<s$ and $1\leq b'<b$ imply that the right hand side (respectively left hand side) is strictly greater than 0 if and only if $\varepsilon_1=-1$ (respectively $\varepsilon_2=-1$). It follows that $\varepsilon_1=\varepsilon_2$, as required.
\end{proof}
 In light this claim,  we will take $\varepsilon= \varepsilon_1= \varepsilon_2\in \{\pm1\}$ from now on.

Observe that \eqref{eq:bproperties} and \eqref{eq:sproperties} imply that $b$ divides $aq-\varepsilon$ and $s$ divides $aqw^2-\varepsilon$. Combining this with \eqref{eq:bsformulae} shows that there is a positive integer $k$ such that
\[kb=aq-\varepsilon,
\quad
ks=aqw^2-\varepsilon
\quad \text{and} \quad
kp= (aqw)^2-1.
\]
Since we can write
\[aqw^2-\varepsilon=w^2(aq-\varepsilon)+\varepsilon(w^2-1),\]
we also see that $k$ also divides $w^2-1$.

Now $p$ takes the form $p=qwc + \delta$ for some $\delta\in \{\pm 1\}$. Therefore we have
\[kp=qwck + \delta k = (qwa)^2 - 1.\]
In particular, we have
\begin{equation}\label{eq:deltakformula}
\delta k = qwN - 1,
\quad \text{where}\quad
N=qwa^2 - ck.
\end{equation}

\begin{claim}
We have $qN\in \{0,1,w\}$.
\end{claim}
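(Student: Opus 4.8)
The plan is to derive the claim purely from the two facts already in hand: the divisibility $k\mid w^2-1$ and the identity $\delta k=qwN-1$ of \eqref{eq:deltakformula}. Setting $t=qN\in\Z$, that identity reads $\delta k=wt-1$, and since $k\geq 1$ while $q\geq 2$ and $w\geq 2$, this immediately fixes both the sign and the scale of $t$. If $\delta=1$, then $wt=k+1\geq 2$ forces $N\geq 1$, hence $t=qN\geq q\geq 2$ and $k=wt-1$. If $\delta=-1$, then $wt=1-k\leq 0$ forces $N\leq 0$, and either $N=0$ — in which case $qN=0$ and we are done — or $N\leq -1$, in which case $t':=-t=q|N|\geq 2$ and $k=1+wt'$. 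So it remains to rule out, or rather pin down, the two cases $k=wt-1$ with $t\geq 2$ and $k=1+wt'$ with $t'\geq 2$.

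For the case $\delta=1$, I would combine $wt\equiv 1\pmod{k}$ with $w^2\equiv 1\pmod{k}$: multiplying the latter congruence by $t$ gives $w-t\equiv tw^2-t\equiv 0\pmod{k}$, so $k\mid w-t$. If $2\leq t\leq w-1$, then $0<w-t<2w-1\leq wt-1=k$, which is impossible; and if $t>w$, then $k=wt-1>w^2-1>0$, contradicting $k\mid w^2-1$. Hence $t=w$, that is, $qN=w$.

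For the case $\delta=-1$ with $N\leq -1$, the same manipulation applied to $wt'\equiv -1\pmod{k}$ yields $t'+w\equiv t'w^2-t'\equiv 0\pmod{k}$, so $k\mid t'+w$; but $t'+w\geq 4>0$ and $k-(t'+w)=(w-1)(t'-1)\geq 1$, so $0<t'+w<k$, a contradiction — this subcase does not occur. Assembling the cases gives $qN\in\{0,w\}\subseteq\{0,1,w\}$, which is the claim. The whole argument is elementary number theory once the identity $\delta k=qwN-1$ is available; the only point that requires genuine care is the chain of inequalities showing that the relevant residue ($w-t$, respectively $t'+w$) is strictly positive and strictly smaller than the divisor $k$, and this is exactly where the hypotheses $q\geq 2$ and $w\geq 2$ enter. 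I expect this bookkeeping to be the main (minor) obstacle, and the natural payoff is that the possibilities $qN=0$ and $qN=w$ will be the two branches analysed in the remainder of the proof.
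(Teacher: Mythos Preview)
Your argument is correct, and in fact slightly sharper than the paper's: under the standing hypothesis $q\geq 2$ you obtain $qN\in\{0,w\}$ directly, whereas the paper first proves $qN\in\{0,1,w\}$ and only afterwards discards $qN=1$ using $q\geq 2$.

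The approaches are genuinely different. The paper argues by explicit factorisation: since $k\mid w^2-1$ and $\delta k=qwN-1$, one writes $(qwN-1)\alpha=w^2-1$, reduces mod $w$ to get $\alpha=\beta w+1$, substitutes back to obtain $(\beta qN-1)w=\beta-qN$, and then does a case analysis on the integer $\beta$ (only $\beta=0$ and $\beta=1$ are compatible with $w\geq 2$, yielding $qN=w$ and $qN=1$ respectively). Your route is cleaner modular arithmetic: from $w^2\equiv 1$ and $wt\equiv \pm 1\pmod{k}$ you deduce $k\mid (w\mp t)$, and then size estimates ($0<|w\mp t|<k$ unless $t=w$, respectively $0<w+t'<k$) finish it. Your method avoids introducing the auxiliary integers $\alpha,\beta$ and makes the role of the inequalities $q,w\geq 2$ more transparent. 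One cosmetic point: in the $\delta=-1$ branch the displayed chain ``$t'+w\equiv t'w^2-t'\equiv 0$'' is not literally the intermediate step (rather $t'+w\equiv -(t'w^2-t')$ after substituting $1\equiv -wt'$), but the conclusion $k\mid t'+w$ is correct and the subsequent inequality $k-(t'+w)=(w-1)(t'-1)\geq 1$ is exactly right.
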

\begin{proof}[Proof of Claim]
Assume that $qN$ is non-zero. Since $k$ divides $w^2-1$, \eqref{eq:deltakformula} shows that $qwN - 1$ divides $w^2-1$. Let $\alpha \in\Z$ be such that $(qwN-1)\alpha=w^2-1$. By considering this equation mod $w$, we see that $\alpha$ takes the form $\alpha=\beta w+1$ for some $\beta\in \Z$. Substituting for $\alpha$ and  rearranging shows that $\beta$ satisfies
\[(\beta qN-1)w=\beta-qN.\]
If $\beta \leq -1$ or $\beta\geq 2$, then there are no possible integer choices of $qN$ for which $w=\frac{\beta -qN}{\beta qN-1}\geq 2$ holds. So it remains only to consider are $\beta=0$ and $\beta=1$. The former implies that $qN=w$ and the latter can only hold if $qN=1$, completing the proof of the claim.
\end{proof}
This gives us three 3 possibilities for $qN$ to consider. If $qN=0$, then $N=0$ and \eqref{eq:deltakformula} shows that $\delta k=-1$. This implies that $k=1$ and hence $c=qwa^2$. This contradict the fact that $\gcd(w,c)=1$.

As we are assuming $q\geq 2$, $qN=1$ can't possibly hold. Thus it remains only to consider the possibility $qN=w$. In this case \eqref{eq:deltakformula} shows that
\[\delta=1, \quad k=w^2-1\, \quad \text{and,}\quad c=\frac{w(q^2a^2-1)}{q(w^2-1)}.\]
As $\gcd(w,c)=1$, $\gcd(q^2a^2-1,q)=1$ and $\gcd(w^2-1,w)=1$, this shows that $q=w$.

It follows that $s=\frac{rq^3 -\varepsilon}{q^2-1}$, $p= \frac{r^2 q^4 -1}{q^2-1}$, $c=\frac{q^2r^2-1}{q^2-1}$ and $b=\frac{rq-\varepsilon}{q^2-1}$. This shows that $K$ must take the required form and that $s$ and $p$ satisfy the required conditions.

As $K$ is a non-trivial cable we have $b=\frac{rq-\varepsilon}{q^2-1}>1$. This will allow us to derive the condition that $r>q$. If $\varepsilon=1$, then $b>1$ clearly implies $r>q$. If $\varepsilon=-1$, then $b\in \Z$ implies that $r\equiv -q \bmod{q^2-1}$. Thus we have $r\geq q^2-q-1$, which implies that $b\geq q-1$ with equality only if $r= q^2-q-1$. Thus $b>1$ implies that either $q>2$ or $r\geq 2q^2-q-2$. In either case this is sufficient to guarantee that $r>q$ when $\varepsilon=-1$.

Finally, observe that
\[rq<s=rq+\frac{rq-\varepsilon}{q^2-1}\leq rq+ \frac{2r+1}{3}<r(q+1),\]
which implies that $q=\lfloor s/r \rfloor$. This completes the proof of one direction.

Conversely, if $s=\frac{rq^3 \pm 1}{q^2-1}\in \Z$, then $r\equiv \mp q \bmod{q^2-1}$, so we have $p=\frac{r^2 q^4 -1}{q^2-1}\in \Z$, $b=\frac{rq \pm 1}{q^2-1} \in\Z$ and $c=\frac{q^2r^2-1}{q^2-1}\in \Z$. This allows us to take $K$ to be the $(q,c)$-cable of $T_{r,b}$. Note that $r>q$ implies $b>1$, so $K$ is cable of a non-trivial torus knot. It is then a straightforward calculation, using Proposition~\ref{prop:torussurgerycalculation}, that for such a $K$ we have
\[S_{p/q}^3(T_{r,s})\cong S_{p/q}^3(K)\cong S_{p/q^3}^3(T_{r,b}) \cong S^2(0; \frac{1-q^2}{r}, \frac{ \mp q^3}{s}, \frac{\mp q}{b}),\]
as required.
\end{proof}

Now we consider the possibility that two torus knots share a surgery. The following generalizes \cite[Proposition~2.4]{Ni14characterizing}.
\begin{lem}\label{lem:toruscase}
If $S_{p/q}^3(T_{r,s})\cong S_{p/q}^3(T_{a,b})$, for some $p/q\in \Q$, then $T_{r,s}=T_{a,b}$ or $p/q\in \{rs\pm 1\}$.
\end{lem}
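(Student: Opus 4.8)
The goal is to show that if two torus knots share a surgery slope $p/q$, then either the knots are equal or $p/q=rs\pm 1$ is the reducible slope of $T_{r,s}$. I would first dispose of the reducible case: if $S_{p/q}^3(T_{r,s})$ is reducible, then $p/q=rs$ is an integer, and a routine check shows $S_{rs}^3(T_{r,s})$ is a connected sum of lens spaces whose summands determine $\{r,s\}$; so we may assume $Y:=S_{p/q}^3(T_{r,s})\cong S_{p/q}^3(T_{a,b})$ is irreducible. If $q=1$ the statement already follows from known integer-surgery results (e.g.\ \cite[Proposition~2.4]{Ni14characterizing}), so assume $q\geq 2$. Then by Proposition~\ref{prop:torussurgerycalculation}, $Y$ is a Seifert fibred space over $S^2$ with three exceptional fibres of orders $r,s,|p-rsq|$ from one description and $a,b,|p-abq|$ from the other; since $q\geq 2$ the third multiplicity $|p-rsq|=q|\,p/q-rs\,|$ is genuinely $>1$ unless $p/q=rs\pm 1$, which is the excluded case, so $Y$ has base orbifold with \emph{three} cone points of order $>1$ and this unordered triple of orders is a homeomorphism invariant.

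Comparing the two triples $\{r,s,|p-rsq|\}=\{a,b,|p-abq|\}$ gives a small number of cases. The main tool to cut these down is the Casson--Walker invariant: since $T_{r,s}$ and $T_{a,b}$ have a common surgery, \eqref{eq:secondderiv} forces $\Delta''_{T_{r,s}}(1)=\Delta''_{T_{a,b}}(1)$, i.e.
\[
(r^2-1)(s^2-1)=(a^2-1)(b^2-1).
\]
Now I would argue by cases on the matching of the orders. If $\{r,s\}=\{a,b\}$ we are done, so suppose not; then (after relabelling) one of the torus-knot orders, say $a$, equals $|p-rsq|$, and correspondingly one of $r,s$ (say $r$) equals $|p-abq|$. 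Keeping $b=s$ or $b\in\{r,|p-abq|\}$ leads to a handful of subcases; in each, one solves for $p$ from the linear relations $\pm a=p-rsq$, $\pm r=p-abq$ (as in the proof of Proposition~\ref{prop:cablenoncharslopes}) and substitutes into the Casson--Walker constraint $(r^2-1)(s^2-1)=(a^2-1)(b^2-1)$, together with the order of $H_1(Y)$ being $|p|$. The resulting Diophantine equations should have no solutions with all parameters $>1$ other than the trivial one $\{r,s\}=\{a,b\}$.

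The Seifert-invariant congruence conditions from \cite{neumann78seifert} (the $b_i/a_i\equiv c_{\pi(i)}/d_{\pi(i)}\bmod 1$ and the sum condition, exactly as used in the proof of Proposition~\ref{prop:cablenoncharslopes}) give additional leverage when the Casson--Walker equation alone is not enough to close a subcase: they pin down the "numerators'' of the Seifert invariants modulo the fibre orders and, combined with $e+\sum b_i/a_i$ being a fixed rational, constrain $p$ tightly. The step I expect to be the main obstacle is precisely this case analysis — showing that $(r^2-1)(s^2-1)=(a^2-1)(b^2-1)$ together with the substituted value of $p$ has no nontrivial integer solutions; this is elementary but delicate number theory (much like the "masochistic'' arithmetic in the proof of Proposition~\ref{prop:cablenoncharslopes} and Lemma~\ref{lem:2Zcondition}), and one has to be careful that the GCD conditions $\gcd(r,s)=\gcd(a,b)=1$ are used where needed to rule out spurious factorizations. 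Once every subcase collapses, we conclude $T_{r,s}=T_{a,b}$ unless $p/q=rs\pm1$.
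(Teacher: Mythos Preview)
Your plan has a genuine gap: you have misread the exceptional set in the lemma and, as a result, entirely omitted the hardest case.

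The excluded slopes are the \emph{integers} $rs\pm 1$, not $rs\pm\frac{1}{q}$. Your sentence ``since $q\geq 2$ the third multiplicity $|p-rsq|=q|\,p/q-rs\,|$ is genuinely $>1$ unless $p/q=rs\pm 1$'' is false: $|p-rsq|\leq 1$ occurs exactly when $p/q\in\{rs,\,rs\pm\frac{1}{q}\}$, and for $q\geq 2$ the slopes $rs\pm\frac{1}{q}$ are \emph{not} in $\{rs\pm 1\}$. So when $q\geq 2$ and $p=rsq\pm 1$, the manifold $Y$ is the lens space $L(p,qr^2)\cong L(p,qa^2)$, and you still have to prove $T_{r,s}=T_{a,b}$. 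This is precisely the content of the Remark following the lemma. The paper handles this case with a continued-fraction argument: writing out the palindromic-type expansion of $p/(qr^2)$ coming from $s/r=[a_0,\dots,a_k]^+$, comparing it with the analogous expansion for $p/(qa^2)$, and using $qr^2\equiv qa^2$ or $q^2r^2a^2\equiv 1\pmod p$ to force the expansions (hence $r,s$ and $a,b$) to coincide. None of the tools you list (base-orbifold invariants, Casson--Walker, Seifert-invariant congruences) are available here, since $Y$ has at most two singular fibres; something lens-space-specific is required.

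Two smaller points. First, your appeal to \cite[Proposition~2.4]{Ni14characterizing} for $q=1$ is unsafe: the paper explicitly says the present lemma \emph{generalizes} that proposition, so you cannot quote it for arbitrary $T_{r,s}$. Second, for the genuinely Seifert case $|p-rsq|>1$ your proposed case analysis is much heavier than needed. From the multiset equality $\{r,s,|p-rsq|\}=\{|a|,|b|,|p-abq|\}$ one can always relabel so that $a=r$ (at least one of $|a|,|b|$ lies in $\{r,s\}$, and $T_{a,b}=T_{-a,-b}$); then the Casson--Walker identity $(r^2-1)(s^2-1)=(a^2-1)(b^2-1)$ immediately forces $|b|=s$, leaving only the possibility $T_{a,b}=T_{r,-s}$, which is ruled out by checking $p=0$. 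No Diophantine case-splitting on which of $a,b$ equals $|p-rsq|$ is necessary.
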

\begin{proof}
Suppose that $Y\cong S_{p/q}^3(T_{r,s})\cong S_{p/q}^3(T_{a,b})$. Without loss of generality we may assume that $r,s>1$.

If $p/q=rs$, then $Y$ is reducible. In this case we must have $ab=rs$ and
\[Y\cong S^3_{r/s}(U)\#S^3_{s/r}(U)\cong S^3_{a/b}(U)\#S^3_{b/a}(U).\]
Therefore $T_{r,s}=T_{a,b}$, in this case.

If $|p-rsq|>1$, then $Y$ is a Seifert fibred space with base orbifold
\[S^2(r,s,|p-rsq|)\cong S^2(|a|,|b|,|p-abq|).\]
It follows that we can assume $r=a$. By applying the Casson-Walker invariant as in \eqref{eq:secondderiv}, we have
\begin{equation*}
(a^2-1)(b^2-1)=(r^2-1)(s^2-1),
\end{equation*}
which implies that $s=|b|$. Thus, if $T_{a,b}\ne T_{r,s}$, then $T_{a,b}=T_{r,-s}$. However, $T_{a,b}=T_{r,-s}$ implies that $|p-rsq|=|p+rsq|$, and hence that $p=0$. It is easy to check, using Proposition~\ref{prop:torussurgerycalculation}, that $S_0^3(T_{r,s})\not\cong S_0^3(T_{-r,s})$.

Thus it only remains to consider the case that $|p-rsq|=1$ and $q>1$. In this case, we also have $|p-abq|=1$ and $Y$ is the lens space \cite{Moser71elementary}
\[Y\cong L(p,qr^2) \cong L(p,qa^2).\]
We can assume that $1<a<b$ and $1<r<s$. For $L(p,qr^2)$ to admit an orientation preserving homeomorphism to $L(p,qa^2)$ we must have either $qr^2 \equiv qa^2 \bmod p$ or $q^2r^2a^2\equiv 1 \bmod p$. As $qr^2<p$ and $qa^2<p$, we see that $qr^2 \equiv qa^2 \bmod p$ implies that $r=a$ and hence that $T_{r,s}=T_{a,b}$. Thus we can assume that $q^2r^2a^2\equiv 1 \bmod p$. Consider the continued fraction expansion
$s/r=[a_0, \dots , a_k]^+$, where $a_k\geq 2$ and $a_i\geq 1$ for $0\leq i\leq k-1$. Using some standard identities for continued fractions one can show that\footnote{The stated equalities follow from applications of the following identities, all of which admit straight-forward proofs:
\begin{enumerate}
\item $\frac{q_n}{q_{n-1}}=[c_n, \dots, c_1]^+$,
\item $p_n q_{n-1}-q_n p_{n-1} = (-1)^{n+1}$, and
\item for any $x\in \Q$, $[c_0, \dots, c_n,x]^+=\frac{p_nx+p_{n-1}}{q_n x + q_{n-1}}$,
\end{enumerate}
where $\frac{p_n}{q_n}=[c_0, \dots, c_n]^+$ denotes the $n$th convergent of a continued fraction with $c_i\geq 1$ for all $i$.}
\[\frac{p}{qr^2}=
\begin{cases}
[a_0,a_1, \dots, a_k,q-1,1, a_k-1, a_{k-1}, \dots, a_1]^+ &\text{if $p=qrs+(-1)^k$}\\
[a_0,a_1, \dots,a_{k-1}, a_k-1,1,q-1, a_k, \dots, a_1]^+ &\text{if $p=qrs-(-1)^k$.}
\end{cases}\]
Since both of these expansions have odd length, we see that if $q^2r^2a^2\equiv 1 \bmod p$, then $\frac{p}{qa^2}$ must have have an expansion of the form\footnote{Here we are using the fact that if $\frac{p}{q}=[c_0, \dots , c_n]^+$, then $\frac{p}{q'}=[c_n, \dots , c_0]^+$, where $qq'\equiv (-1)^n \bmod p$.}
\begin{equation}\label{eq:contfrac1}
\frac{p}{qa^2}=
\begin{cases}
[a_1, \dots, a_{k-1},a_k-1,1,q-1, a_k, \dots, a_0]^+ &\text{if $p=qrs+(-1)^k$}\\
[a_1, \dots, a_k,q-1,1, a_k-1, a_{k-1}, \dots, a_0]^+ &\text{if $p=qrs-(-1)^k$.}
\end{cases}\end{equation}
However, if we consider the continued fraction expansion $b/a=[b_0, \dots , b_l]^+$, where $b_l\geq 2$ and $b_i\geq 1$ for $0\leq i\leq l-1$, then
\begin{equation}\label{eq:contfrac2}
\frac{p}{qa^2}=
\begin{cases}
[b_0,b_1, \dots, b_l,q-1,1, b_l-1, b_{l-1}, \dots, b_1]^+ &\text{if $p=qab+(-1)^l$}\\
[b_0,b_1, \dots,b_{l-1}, b_l-1,1,q-1, b_l, \dots, b_1]^+ &\text{if $p=qab-(-1)^l$.}
\end{cases}\end{equation}
Since $\frac{p}{qa^2}$ admits a unique continued fraction expansion of odd length with every coefficient strictly positive, we see that the two continued fraction expansions in \eqref{eq:contfrac1} and \eqref{eq:contfrac2} must be the same. Comparing the lengths of these two expansions for $\frac{p}{qa^2}$ shows that $l=k$. Comparing the coefficients individually and using the assumption that $a_k,b_k>1$ soon shows that $\frac{s}{r}=\frac{b}{a}=[\underbrace{q,\dots,q}_{k+1}]^+$. Altogether, this shows that $T_{r,s}=T_{a,b}$ unless $p=rs\pm1$.
\end{proof}
\begin{rem}
When combined with the cyclic surgery theorem of Culler, Gordon, Luecke and Shalen \cite{cglscyclic}, Lemma~\ref{lem:toruscase} implies that for any $q>1$, any slope of the form $p/q= rs\pm \frac{1}{q}$ is a characterizing slope for $T_{r,s}$.
\end{rem}

Using results of Agol \cite{Agol00BoundsI}, Lackenby \cite{Lackenby03Exceptional}, and Cao and Meyerhoff \cite{Cao01cusped}, Ni and Zhang give a restriction on exceptional slopes of a hyperbolic knot.
\begin{prop}[Lemma~2.2, \cite{Ni14characterizing}] \label{prop:hyperbolicbound}
Let $K\subseteq S^3$ be a hyperbolic knot. If
\[|p|\geq \frac{43}{4}(2g(K)-1)\]
then $S^3_{p/q}(K)$ is a hyperbolic manifold.
\end{prop}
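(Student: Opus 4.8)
The plan is to combine the Dehn-filling theorems of Agol and Lackenby with an estimate for the cusp geometry of $S^3\setminus\nu K$ in terms of the Seifert genus. Since $K$ is hyperbolic, $M=S^3\setminus\nu K$ is a one-cusped finite-volume hyperbolic $3$-manifold, so I would first fix the maximal horocusp and let $T$ denote its boundary, a Euclidean torus (immersed, possibly self-tangent) of area $A$. The meridian $\mu$ and the null-homologous longitude $\lambda$ form a basis of $H_1(T)$, and for $\gcd(p,q)=1$ the surgery slope $p/q$ is represented on $T$ by a geodesic of length $\ell_T(p\mu+q\lambda)=|p\mu+q\lambda|$. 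By the $6$-theorem of Agol and Lackenby, if this length exceeds $6$ then $S^3_{p/q}(K)$ is irreducible, atoroidal, not Seifert fibred and has infinite word-hyperbolic fundamental group, hence is hyperbolic by geometrization. So it suffices to show that $\ell_T(p\mu+q\lambda)>6$ whenever $|p|\geq\frac{43}{4}(2g(K)-1)$, uniformly in $q$.

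Next I would bring in the genus through the longitude. A minimal-genus Seifert surface $S$ for $K$ is incompressible and boundary-incompressible in $M$, has a single boundary curve of slope $\lambda$ on $T$, and satisfies $|\chi(S)|=2g(K)-1$; applying the Agol--Lackenby length bound for boundary slopes of essential surfaces (the same circle of ideas underlying the $6$-theorem) to $S$ yields a linear estimate, say $\ell_T(\lambda)\leq 6\,(2g(K)-1)$. Independently, Cao and Meyerhoff's analysis of minimal-volume cusped hyperbolic $3$-manifolds, equivalently of optimal horoball packings, bounds the area of the maximal cusp cross-section from below by an explicit universal constant $A_0$. Then I would use a purely Euclidean observation on $T$: the $\mathbb{Z}\mu$-translates of a geodesic representing $\lambda$ are parallel lines with consecutive spacing $A/\ell_T(\lambda)$, and $p\mu+q\lambda$ joins one such line to a translate lying $|p|$ layers away, so
\[
\ell_T(p\mu+q\lambda)\ \geq\ |p|\cdot\frac{A}{\ell_T(\lambda)}\ \geq\ \frac{A_0}{6}\cdot\frac{|p|}{2g(K)-1}.
\]
This exceeds $6$ as soon as $|p|>\tfrac{36}{A_0}(2g(K)-1)$, and the value of $A_0$ coming from Cao--Meyerhoff makes $\tfrac{36}{A_0}\leq\tfrac{43}{4}$; note the denominator $q$ plays no role.

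The hard part here is quantitative: one has to extract from the cited hyperbolic-geometry results constants sharp enough to combine to $\tfrac{43}{4}$ — in practice a lower bound near $3.35$ for the maximal cusp area of a hyperbolic knot complement together with the linear bound $6(2g-1)$ on the longitude length. Everything else is routine care: interpreting areas and lengths in the universal cover since the maximal cusp torus is only immersed, checking that the $6$-theorem applies to a single slope on a one-cusped manifold, and verifying that a minimal-genus Seifert surface may be isotoped to be essential so that the boundary-slope length estimate applies.
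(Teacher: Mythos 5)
Your argument is correct and is essentially the proof of the result being quoted: the paper simply cites Ni--Zhang's Lemma~2.2, whose proof is exactly this combination of the 6-theorem, Agol's bound $\ell(\lambda)\leq 6|\chi(S)|=6(2g-1)$ for the boundary slope of a minimal-genus Seifert surface, the Cao--Meyerhoff cusp-area bound $A\geq 3.35$, and the Euclidean estimate $\ell(p\mu+q\lambda)\,\ell(\lambda)\geq |p|A$, with $36/3.35<43/4$ closing the numerics. No substantive differences to report.
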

This is the final ingredient we require for the proof of the main theorem.

{\customthm{Theorem~\ref{thm:toruscharslopes}}
For $s>r>1$ and $q\geq 2$ let $K$ be a knot such that $S_{p/q}^3(K)\cong S_{p/q}^3(T_{r,s})$. If $p$ and $q$ satisfy at least one of the following:
\begin{enumerate}[(i)]
\item $p \leq \min \{-\frac{43}{4}(rs-r-s),-32q\}$,
\item $p\geq \max\{\frac{43}{4}(rs-r-s), 32q+ 2q(r-1)(s-1)\}$, or
\item $q\geq 9$,
\end{enumerate}
then we have either $(a)$ $K=T_{r,s}$, or $(b)$ $K$ is a cable of a torus knot, in which case $q=\lfloor s/r\rfloor$, $p=\frac{r^2 q^4 -1}{q^2-1}$, $s=\frac{rq^3 \pm 1}{q^2-1}$ and $r>q$.}

\begin{proof}
Let $K$ be a knot such that $S_{p/q}^3(K)\cong S_{p/q}^3(T_{r,s})$ for some $p/q$ with $q\geq 2$ such that at least one of conditions $(i),(ii)$ or $(iii)$ are satisfied.
Since $4q^2+12-2q<32q$ for $q\leq 8$ and $g(T_{r,s})=\frac{(r-1)(s-1)}{2}$, Theorem~\ref{thm:technical2} shows that either (a) $q\geq 9$ or (b) $K$ is fibred with $g(K)=g(T_{r,s})$ and $|p|\geq \frac{43}{4} (2g(K)-1)$.

According to Thurston, every knot in $S^3$ is either a hyperbolic knot, a satellite knot or a torus knot \cite{Thurston823DKleinanGroups}. We consider each of these possibilities in turn.

Given Proposition~\ref{prop:hyperbolicbound} and the fact that any exceptional surgery on a hyperbolic knot in $S^3$ must satisfy $q\leq 8$ \cite{lackenby13exceptional}, we see that $K$ cannot be a hyperbolic knot.

If $K$ is a satellite knot, then there is an incompressible torus $R\subseteq S^3 \setminus K$. This bounds a solid torus $V\subseteq S^3$ which contains $K$. Let $K'$ be the core of the solid torus $V$. By choosing $R$ to be innermost, we may assume that $K'$ is not a satellite. This means that $K'$ is either a torus knot or a hyperbolic knot. Since $S^3_{p/q}(T_{r,s})$ is irreducible and does not contain an incompressible tori, it follows from the work of Gabai that $V_{p/q}(K)$ is again a solid torus and that $K$ is either a 1-bridge knot or a torus knot in $V$ and $S_{p/q}^3(K)\cong S_{p/(qw^2)}^3(K')$, where $w>1$ is the winding number of $K$ in $V$ \cite{gabai89solidtori}. Moreover, as $q>1$, it follows that $K$ is a torus knot in $V$.

If $q\geq 9$, then the distance bound on exceptional surgeries shows that $K'$ is not hyperbolic. If $q\leq 8$, then $K$ is fibred, implying that $K'$ is also fibred \cite{Hirasawa2008Fiber}. It follows, for example by considering the degrees of $\Delta_K(t)$ and $\Delta_{K'}(t)$, that $g(K')< g(K)$. Therefore, if $q\leq 8$, then we have $|p|> \frac{43}{4} (2g(K')-1)$ and Proposition~\ref{prop:hyperbolicbound} shows that $K'$ is not hyperbolic. Thus we can assume that $K'$ is a torus knot. Thus, we have shown that if $K$ is a satellite, then it is a cable of a torus knot. In this case Proposition~\ref{prop:cablenoncharslopes} applies to give the desired conclusions on $p,q,r$ and $s$.

If $K$ is a torus knot, then Lemma~\ref{lem:toruscase} shows that $K=T_{r,s}$, as required.
\end{proof}

{\customthm{Corollary~\ref{cor:intcase}}
The knot $T_{r,s}$ with $r,s>1$ has only finitely many non-characterizing slopes which are not negative integers.}
\begin{proof}
This follows from Theorem~\ref{thm:toruscharslopes} and the results of \cite{mccoy2014sharp} which show that any slope
\[\frac{p}{q}\geq \frac{43}{4}(rs-r-s)\]
is a characterizing slope for the torus knot $T_{r,s}$ with $r,s>1$.
\end{proof}

\bibliographystyle{alpha}
\bibliography{cone}
\end{document}